\theoremstyle{plain}
  \newtheorem{thm}{Theorem}[section]
  \newtheorem{lem}[thm]{Lemma}
  \newtheorem{prop}[thm]{Proposition}
  \newtheorem{cor}[thm]{Corollary}
\theoremstyle{definition}
  \newtheorem{defn}[thm]{Definition}
  \newtheorem{ques}[thm]{Question}
  \newtheorem{exmp}[thm]{Example}
  \newtheorem{rem}[thm]{Remark}
\def\ps@pprintTitle{  \let\@oddhead\@empty  \let\@evenhead\@empty  \def\@oddfoot{\centerline{\thepage}}  \let\@evenfoot\@oddfoot} \makeatother
\begin{document}

\newcommand{\oto}{{\lra\hspace*{-3.1ex}{\circ}\hspace*{1.9ex}}}
\newcommand{\lam}{\lambda}
\newcommand{\da}{\downarrow}
\newcommand{\Da}{\Downarrow\!}
\newcommand{\D}{\Delta}
\newcommand{\ua}{\uparrow}
\newcommand{\ra}{\rightarrow}
\newcommand{\la}{\leftarrow}
\newcommand{\Lra}{\Longrightarrow}
\newcommand{\Lla}{\Longleftarrow}
\newcommand{\rat}{\!\rightarrowtail\!}
\newcommand{\up}{\upsilon}
\newcommand{\Up}{\Upsilon}
\newcommand{\ep}{\epsilon}
\newcommand{\ga}{\gamma}
\newcommand{\Ga}{\Gamma}
\newcommand{\Lam}{\Lambda}
\newcommand{\CF}{{\cal F}}
\newcommand{\CG}{{\cal G}}
\newcommand{\CH}{{\cal H}}
\newcommand{\CI}{{\cal I}}
\newcommand{\CB}{{\cal B}}
\newcommand{\CT}{{\cal T}}
\newcommand{\CS}{{\cal S}}
\newcommand{\CV}{{\cal V}}
\newcommand{\CP}{{\cal P}}
\newcommand{\CQ}{{\cal Q}}
\newcommand{\mq}{\mathcal{Q}}
\newcommand{\cu}{{\underline{\cup}}}
\newcommand{\ca}{{\underline{\cap}}}
\newcommand{\nb}{{\rm int}}
\newcommand{\Si}{\Sigma}
\newcommand{\si}{\sigma}
\newcommand{\Om}{\Omega}
\newcommand{\bm}{\bibitem}
\newcommand{\bv}{\bigvee}
\newcommand{\bw}{\bigwedge}
\newcommand{\lra}{\longrightarrow}
\newcommand{\tl}{\triangleleft}
\newcommand{\tr}{\triangleright}
\newcommand{\dda}{\downdownarrows}
\newcommand{\dia}{\diamondsuit}
\newcommand{\y}{{\bf y}}
\newcommand{\colim}{{\rm colim}}
\newcommand{\fR}{R^{\!\forall}}
\newcommand{\eR}{R_{\!\exists}}
\newcommand{\dR}{R^{\!\da}}
\newcommand{\uR}{R_{\!\ua}}
\newcommand{\swa}{{\swarrow}}
\newcommand{\sea}{{\searrow}}
\newcommand{\bbA}{{\mathbb{A}}}
\newcommand{\bbB}{{\mathbb{B}}}
\newcommand{\bbC}{{\mathbb{C}}}
\numberwithin{equation}{section}
\renewcommand{\theequation}{\thesection.\arabic{equation}}

\begin{frontmatter}
\title{Yoneda completeness and  flat completeness of ordered fuzzy sets\tnoteref{F}}

\tnotetext[F]{This work is supported by National Natural Science Foundation of China (11371265, 11101297).}

\author{Wei Li}
\ead{mathli@foxmail.com}
\author{Hongliang Lai}
\ead{hllai@scu.edu.cn}
\author{Dexue Zhang\corref{cor}}
\ead{dxzhang@scu.edu.cn}
\cortext[cor]{Corresponding author, Tel: +86 13558678979}
\address{School of Mathematics, Sichuan University, Chengdu 610064, China}

\begin{abstract} This paper studies Yoneda completeness and flat completeness  of ordered fuzzy sets valued in the quantale obtained by endowing the unit interval  with  a  continuous triangular norm. Both of these notions are natural extension of   directed completeness in order theory to the fuzzy setting. Yoneda completeness requires every forward Cauchy net converges (has a Yoneda limit), while flat completeness requires every flat weight (a counterpart of ideals in partially ordered sets) has a supremum. It is proved that flat completeness implies Yoneda completeness, but, the converse implication holds only in the case that the related triangular norm is either isomorphic to the {\L}ukasiewicz t-norm or to the product t-norm.
\end{abstract}

\begin{keyword}
Category theory    \sep continuous t-norm \sep quantale \sep  quantaloid \sep ordered fuzzy set \sep forward Cauchy net \sep Yoneda limit \sep Yoneda complete \sep flat weight \sep flat complete
\end{keyword}

\end{frontmatter}

\section{Introduction}
A partially ordered set $P$ is directed complete if each directed subset of $P$ has a supremum. Directed completeness is of fundamental importance in the theory of partial orders \cite{Gierz2003}, so, it is tempting  to establish a counterpart of this notion in the fuzzy setting. In  this introduction, we explain the problems we encounter in this process, summarize briefly what has been done in the literature and what will be done in this paper.

Let $\CQ=(Q,\&,1)$ be a quantale.
There are two kinds of fuzzy orders valued in $\CQ$: the first is ($\CQ$-valued) fuzzy orders on crisp sets; the second is orders on ($\CQ$-valued) fuzzy sets. A crisp set together with a fuzzy order (valued in $\CQ$) is,  from the point of view of category theory,   a category enriched over $\CQ$ (considered as a one-object monoidal biclosed category), with generalized metric spaces in the sense of Lawvere \cite{Lawvere1973} and fuzzy orders in the sense of Zadeh \cite{Zadeh1971}  as prominent examples. A fuzzy set (valued in  $\CQ$) equipped with an order is a category enriched in the quantaloid ${\sf D}(\CQ)$ of diagonals in $\CQ$ \cite{Hoehle2015,Hoehle2011,Pu2012,Stubbe2014},   with sheaves \cite{Betti-Carboni82,Walters81}, $\Omega$-posets \cite{Borceux1998}, and generalized partial metric spaces  \cite{Hoehle2011,Kuenzi2006,Matthews1994,Pu2012} as prototypes. So, both kinds of fuzzy orders are enriched categories,  with the first being over a quantale and the second over a quantaloid. The reader is referred to \cite{Stubbe2014} for a nice introduction to the relationship between fuzzy orders and quantaloid-enriched categories, see also \cite{Hoehle2015,Hoehle2011,Pu2012}.  It should be noted that the category of $\CQ$-categories (i.e., crisp sets together with $\CQ$-valued orders) can be identified with a full subcategory of ${\sf D}(\CQ)$-categories (i.e., ordered fuzzy sets valued in $\CQ$). So, the study of fuzzy orders on crisp sets is a special case of that of orders on fuzzy sets. Fuzzy orders on crisp sets have received wide attention (to name a few, \cite{America1989,Bvelohlavek2004,Bonsangue1998,Hofmann2012,Lai2006,Lai2007,Rutten1998,Wagner1994,Wagner1997,Waszkiewicz2009}), but, the general theory of orders on fuzzy sets is still at its beginning steps, many things remain to be unveiled.

Directed completeness of  $\mathcal{Q}$-categories (i.e.  crisp sets equipped with fuzzy orders) has already received wide attention. The aim of this paper is to investigate directed completeness of ordered fuzzy sets (or, ${\sf D}(\CQ)$-categories) in the case that $\CQ$ is the unit interval $[0,1]$ coupled with a continuous t-norm $\&$. These quantales play an important role in fuzzy set theory, for instance, the BL-logic \cite{Hajek1998} is a logic based on continuous t-norms.

In order to explain what has been done in the literature and what will be done in this paper, we recall two equivalent characterizations of directed completeness first. Let $P$ be a partially ordered set. A subset of $P$ is called an ideal if it is a directed and a lower set. A net $\{x_i\}$ in $P$ is eventually monotone if there is some $i$ such that $x_j\leq x_k$ whenever $i\leq j\leq k$. An element $y\in P$ is an eventual upper bound of a net $\{x_i\}$   if there is some $i$ such that $x_j\leq y$ whenever $i\leq j$.  For a partially ordered set $P$,  it is clear that the following are equivalent: \begin{itemize}\item $P$ is directed complete. \item Each ideal of $P$ has a supremum. \item For each eventually monotone net $\{x_i\}$ in $P$, there is some $x\in P$ such that for all $y\in P$, $x\leq y$ if and only if $y$ is an eventual upper bound of $\{x_i\}$. Said differently, each eventually monotone net has a least eventual upper bound. \end{itemize}

Both the approach of ideals (i.e., each ideal has a supremum) and the approach of nets (i.e., each eventually monotone net has a least eventual upper bound) to directed completeness  have been extended to $\mathcal{Q}$-categories.
For the approach of nets, forward Cauchy nets (a $\CQ$-version of eventually monotone nets) have been introduced, resulting in the notion of Yoneda complete $\mathcal{Q}$-categories (a.k.a. liminf complete $\mathcal{Q}$-categories), see e.g. \cite{Bonsangue1998,Flagg1996,Goubault,Kuenzi2002,Lai2016,Vickers2005,Wagner1997}.
For the approach of ideals, certain classes  of weights in $\mathcal{Q}$-categories have been proposed as $\CQ$-versions of ideals, see e.g. \cite{Flagg1997,Flagg2002,Flagg1996,Kostanek2011,Lai2007,Schmitt2009,Vickers2005,Waszkiewicz2009}. The  approach via ideals is in fact an instance of the  theory of $\Phi$-cocompleteness for enriched categories \cite{Albert1988,Kelly1982,Kelly2005}.
It should be stressed that the situation with $\CQ$-categories is much more complicated than the classic case. To see this, we list two facts here. The first, there lacks a ``standard" choice of weights that can be treated as a counterpart of ideals in partially ordered sets. Instead, different classes of weights have been proposed in the literature for different kinds of quantales.  For instance, $\mathcal{V}$-ideals for $\mathcal{V}$-continuity spaces \cite{Flagg1997,Flagg2002,Flagg1996},  flat weights for generalized metric spaces \cite{Vickers2005}, etc.  The second,  though the two approaches are equivalent in the classic case,  their relationship is not clear in the fuzzy setting, see e.g.  \cite{Hofmann2012,Schmitt2009}. However, there have been some interesting results in this regard. For instance, for $\mathcal{V}$-continuity spaces, Yoneda completeness is equivalent to that each $\CV$-ideal has a supremum \cite{Flagg2002,Flagg1996}; for  generalized metric spaces,  Yoneda completeness is equivalent to that each flat weight has a supremum \cite{Vickers2005}.

In this paper, both approaches  to directed completeness will be extended to ordered fuzzy sets in the case that $\CQ$ is the quantale obtained by endowing the unit interval $[0,1]$ with a continuous t-norm $\&$.   For the approach of ideals,  flat completeness, that requires every flat weight has a supremum, is considered; for the approach of nets,  Yoneda completeness, that requires every forward Cauchy net has a Yoneda limit, is considered. The focus is on the relationship between flat completeness and Yoneda completeness. It is shown that flat  completeness always implies Yoneda completeness, but the converse implication holds  only when the t-norm $\&$ is either isomorphic to the {\L}ukasiewicz t-norm or to the product t-norm. These results exhibit a deep connection between properties of ordered fuzzy sets and the structure of $\CQ$ --- the table of truth-values.

The contents are arranged as follows.
Section 2  recalls some basic ideas about quantales, continuous t-norms, and quantaloids. Section 3  recalls the notion  of ordered fuzzy sets valued in a quantale $\mathcal{Q}$, with emphasis on the fact that they are categories enriched over a quantaloid constructed from $\CQ$.
Section 4 introduces the notions of flat weights  and flat  completeness for ordered fuzzy sets. These notions make sense for categories enriched in any quantaloid.
Section 5 introduces the concepts of forward Cauchy nets and Yoneda completeness for ordered fuzzy sets and presents some of their basic properties.
Section 6  shows that if $\&$ is a continuous t-norm, then flat completeness implies Yoneda completeness for ordered fuzzy sets valued in the quantale $([0,1],\&,1)$.
Section 7 proves that if $\&$ is a continuous t-norm, then   each Yoneda complete ordered fuzzy set (valued in the quantale $([0,1],\&,1)$) with an isolated element (defined below) is flat complete  if and only if $\&$ has no non-trivial idempotent elements. Concluding remarks  are in the last section.

\section{Preliminaries: continuous t-norms, quantales, and quantaloids}
A {\it quantale} $\CQ$ \cite{Rosenthal1990} is a triple $(Q,\&, 1)$,  where $Q$ is a complete lattice, $1$ is an element of $Q$, and $\&$ is a semigroup operation on $Q$ such that $a\&(\bigvee b_i ) =\bv (a\&b_i )$, $(\bigvee b_i)\&a =\bigvee(b_i\&a)$, and $1\&a = a = a\&1$ for all $a, b_i \in Q$. The top element and bottom element of $Q$ will be denoted by $\top$ and $\bot$ respectively. A  homomorphism  $f: (Q,\&, 1)\lra (Q',\&', 1')$ between  quantales \cite{Rosenthal1990} is a map $f: Q\lra Q'$ such that $f(1)=1'$, $f(\bv_{t\in T} a_t)=\bv_{t\in T} f(a_t)$,   and  $f(a\&b)=f(a)\&'f(b)$ for all $a,b,a_t\in Q$. It is clear that quantales and homomorphisms form a category.

Given a quantale $(Q,\&, 1)$ and $a\in Q$, there exist two pairs of adjunctions $-\& a\dashv-\swarrow a$ and $a\&-\dashv a\searrow-$. The binary operations $\swarrow, \searrow$ will be called \emph{the left  and  right implications} in $(Q,\&, 1)$, respectively. For any $a,b,c\in Q$, we have by definition $$ a\leq c\swarrow b \Longleftrightarrow  a\& b\leq c  \Longleftrightarrow  b\leq a\searrow c.$$

A quantale $(Q,\&, 1)$ is \emph{commutative} if $a\&b=b\&a$ for all $a,b\in Q$. If $(Q,\&, 1)$ is commutative, then $a\searrow  b=b\swarrow a$ for all $a,b\in Q$. In this case, we write $a \ra b$ for both $b\swarrow a$ and $a\searrow b$.
A quantale $(Q,\&, 1)$ is {\it divisible} if for any $x, y \in Q$,  $x \leq y$ implies that $ a\&y = x = y\&b$ for some $a, b\in Q$.

\begin{exmp}\label{exmp} \begin{enumerate}[(1)] \item (\cite{Rosenthal1990}) A frame is a complete lattice $H$ such that for each $a\in H$, the operation $a\wedge-$ distributes over arbitrary joins. If $H$ is a frame then $(H,\wedge,1)$ is  a commutative and divisible quantale.
\item  Lawvere's quantale  $([0,\infty]^{\rm op},+,0)$, introduced in \cite{Lawvere1973}, is both  commutative and divisible   with $a\ra b=\max\{0,b-a\}$. \end{enumerate} \end{exmp}

\begin{lem}\label{GL cond}(\cite{RL,PZ2012}) Let $(Q,\&, 1)$ be a quantale. The following conditions are equivalent:
\begin{enumerate}
\item[\rm (1)] $(Q,\&,1)$ is divisible.
\item[\rm (2)] $\forall a,b\in Q$, $a\leq b \Rightarrow a=b\& (b\searrow a)=(a\swarrow b)\& b $.
\item[\rm (3)] $\forall a,b,c\in Q$, $a,b\leq c\Rightarrow a\& (c\searrow b)=(a\swarrow c)\& b$.
\item[\rm (4)] $\forall a,b\in Q$,
$(b\swarrow a)\& a=a\wedge b=a\& (a\searrow b)$.
\end{enumerate}
In this case, the underlying lattice $Q$ is a frame and the unit $1$ must be the top element.
\end{lem}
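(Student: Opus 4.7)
The plan is to establish the equivalence of (1)--(4) cyclically, as $(1)\Rightarrow(2)\Rightarrow(3)\Rightarrow(4)\Rightarrow(1)$, and then to extract the two closing statements ($1=\top$ and the frame law) from (4).

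For $(1)\Rightarrow(2)$, I would unpack divisibility: when $a\le b$, there is some $y$ with $b\&y=a$, so by the adjunction $y\le b\searrow a$, and hence $a=b\&y\le b\&(b\searrow a)$; the counit inequality $b\&(b\searrow a)\le a$ supplies the reverse direction, and the dual equality is symmetric. For $(2)\Rightarrow(3)$, the idea is to rewrite both $a$ and $b$ relative to $c$ using (2), giving $a=(a\swarrow c)\&c$ and $b=c\&(c\searrow b)$, then chain them:
\[ a\&(c\searrow b)=(a\swarrow c)\&c\&(c\searrow b)=(a\swarrow c)\&b. \]
For $(4)\Rightarrow(1)$, I would just read divisibility off (4): when $x\le y$, setting $a=y,b=x$ yields $(x\swarrow y)\&y=y\wedge x=x$, and the dual holds.

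The main step is $(3)\Rightarrow(4)$, the obstacle being that (3) is a relative identity parametrized by an upper bound $c$ while (4) is an absolute statement involving the binary meet. I would route this through (2), but must first derive $1=\top$. Specialising (3) to $a=1,b=c=\top$ gives $1\&(\top\searrow\top)=(1\swarrow\top)\&\top$; since $\top\&\top\le\top$ forces $\top\searrow\top=\top$, the left side equals $\top$, while the adjunction bounds the right side by $1$. Hence $1=\top$. Once this is known, $b\swarrow b=1$ for every $b$, and specialising (3) to the triples $(b,a,b)$ and $(a,b,b)$ (both legal when $a\le b$) yields the two identities of (2). To pass from (2) to (4), I would sandwich $(b\swarrow a)\&a$: it is bounded above by $b$ via the counit and by $a$ because $b\swarrow a\le\top=1$, hence by $a\wedge b$; conversely $a\wedge b\le a$, so (2) gives $a\wedge b=((a\wedge b)\swarrow a)\&a$, and monotonicity of $\swarrow$ in its first argument together with $a\wedge b\le b$ gives $(a\wedge b)\swarrow a\le b\swarrow a$, hence $a\wedge b\le (b\swarrow a)\&a$. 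The companion equality $a\&(a\searrow b)=a\wedge b$ is symmetric.

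Finally, for the closing statements: $1=\top$ was established above (and can also be read directly off (4) applied to $a=\top,b=1$). For the frame law, I would apply (4) with $b$ replaced by $\bigvee_i b_i$ to obtain $a\wedge\bigvee_i b_i=(a\swarrow\bigvee_i b_i)\&\bigvee_i b_i$, distribute $\&$ over the join, and observe that each summand $(a\swarrow\bigvee_i b_i)\&b_j$ is at most $a$ (by the counit) and at most $b_j$ (since $a\swarrow\bigvee_i b_i\le 1$), hence at most $a\wedge b_j$. This sandwiches $a\wedge\bigvee_i b_i$ between $\bigvee_i(a\wedge b_i)$ and itself, giving the frame law.
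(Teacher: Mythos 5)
The paper does not prove this lemma at all --- it is quoted from the cited references \cite{RL,PZ2012} --- so there is no in-text argument to compare against; judged on its own, your proof is correct and complete. The cycle $(1)\Rightarrow(2)\Rightarrow(3)\Rightarrow(4)\Rightarrow(1)$ is sound: the key step $(3)\Rightarrow(4)$ correctly extracts $1=\top$ from the specialisation $a=1$, $b=c=\top$ (using $\top\searrow\top=\top$ and the counit $(1\swarrow\top)\&\top\le 1$), then recovers (2) from the triples $(b,a,b)$ and $(a,b,b)$, and the sandwich argument for $(b\swarrow a)\&a=a\wedge b$ and for the frame law both go through. One trivial slip: in the parenthetical remark that $1=\top$ can ``also be read directly off (4)'', the specialisation should be $a=1$, $b=\top$ (giving $(\top\swarrow 1)\&1=\top=1\wedge\top=1$), not $a=\top$, $b=1$, which yields only $(1\swarrow\top)\&\top=1$ and does not immediately force $\top=1$. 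Since that remark is an aside and your main derivation of $1=\top$ comes from (3), nothing in the argument depends on it.
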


A continuous t-norm on $[0,1]$ is a binary operation $\&$ on $[0,1]$ that makes $([0,1],\&,1)$ into a  divisible quantale \cite{Hajek1998}. It is well-known that if $\&$ is a continuous t-norm on $[0,1]$ then $\&$ must be commutative \cite{Mostert1957}. 

\begin{exmp}\label{continuity of implication} (\cite{Hajek1998,Klement2000,Mostert1957}) Three basic continuous t-norms:
\begin{enumerate}
\item[(1)] The G\"{o}del t-norm $\&_M$: $a\&_Mb= \min\{a, b\}$. For this t-norm, $a\ra b= 1$ if $a\leq b$ and $a\ra b= b$ if $a> b$.

\item[(2)] The {\L}ukasiewicz t-norm $\&_{\L}$: $a\&_{\L}b=\max\{a+b-1,0\}$. For this t-norm, $a\ra b= \min\{1-a+b,1\}$. It is clear that $\ra$ is continuous on $[0,1]\times[0,1]$ in this case.

\item[(3)]The product t-norm $\&_P$: $a\&_Pb=a\cdot b$. For this t-norm, $a\ra b= 1$ if $a\leq b$ and $a\ra b= b/a$ if $a> b$. In this case, $\ra$ is continuous on $(0,1]\times[0,1]$. It is clear that  $([0,1],\&_P,1)$ is isomorphic to Lawvere's quantale  $([0,\infty]^{\rm op},+,0)$.
\end{enumerate}\end{exmp}

The notion of continuous t-norms makes sense  on any closed interval $[a,b]$ $(a<b)$ in $\mathbb{R}$, the only thing one needs to do is to replace  $0$ and $1$ by $a$ and $b$, respectively.

An element $a\in [0,1]$ is  idempotent with respect to a continuous t-norm $\&$ if $a\&a=a$.   If $a\in [0,1]$ is idempotent, then \begin{equation}\label{tensor with an idempoent} a\&b=\min\{a, b\}\end{equation} for all $b\in[0,1]$ (see e.g. \cite{Klement2000}, Proposition 2.3) and \begin{equation}\label{implication around an idempotent} c\ra b=b\end{equation} whenever $b<a\leq c$.

It is obvious that if $\&$ is a continuous t-norm on $[0,1]$ and $a, b$ ($a<b$) are idempotent with respect to $\&$, then the restriction of $\&$ on $[a,b]$ is a continuous t-norm on $[a,b]$.

By abuse of language, we say that continuous t-norms $\&,\&'$ on $[0,1]$ are isomorphic if the quantales $([0,1],\&,1)$ and $([0,1],\&',1)$ are isomorphic.  The following conclusion is of fundamental importance in the theory of continuous t-norms.

\begin{thm}\label{ord sum} (\cite{Klement2000,Mostert1957}) Let $\&$ be a continuous t-norm on $[0,1]$. If $a\in (0,1)$ is non-idempotent, then there exist idempotent elements $a_{-}, a^{+}\in [0,1]$ such that $a_-<a<a^+$ and that  the quantale  $([a_{-},a^{+}],\&,a^{+})$ is either isomorphic to $([0,1],\&_{\L},1)$ or to $([0,1],\&_{P},1)$ in the category of quantales and homomorphisms. In particular, if $\&$ has no non-trivial idempotent elements, then $\&$ is either isomorphic to the {\L}ukasiewicz t-norm or to the product t-norm. \end{thm}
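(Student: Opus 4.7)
The plan is to split the argument into two parts: first, reduce to the case of a continuous t-norm on $[0,1]$ whose only idempotents are $0$ and $1$ (the Archimedean case); then classify such t-norms up to isomorphism. For the reduction, I would start by observing that the idempotent set $E=\{x\in[0,1]:x\&x=x\}$ is closed in $[0,1]$, since $\&$ is continuous; hence $[0,1]\setminus E$ decomposes into countably many disjoint open intervals. The given non-idempotent $a$ lies in one such interval $(a_-,a^+)$, and both endpoints must be idempotent. Using the fact recalled above that $e\&x=\min\{e,x\}$ whenever $e$ is idempotent, one checks that $[a_-,a^+]$ is closed under $\&$ (for $x,y\in[a_-,a^+]$, we have $a_-=a_-\&x\leq x\&y\leq a^+\&a^+=a^+$), so the restriction of $\&$ gives a continuous t-norm on $[a_-,a^+]$ whose only idempotents are $a_-$ and $a^+$. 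The canonical order-isomorphism $[a_-,a^+]\to[0,1]$ transports $\&$ to a continuous t-norm on $[0,1]$ with no non-trivial idempotents, reducing the theorem to the Archimedean case.

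For the Archimedean case I would first confirm the \emph{Archimedean property}: for any $x\in(0,1)$, the decreasing sequence $x,\,x\&x,\,x\&x\&x,\dots$ converges to some $y\in[0,1]$, and continuity of $\&$ forces $y\&y=y$; since no non-trivial idempotents exist and $y\leq x<1$, we get $y=0$. A standard consequence is that $\&$ is strictly monotone in each variable away from $0$-divisors. The central technical step is then to construct an \emph{additive generator}, i.e., a continuous strictly decreasing map $f:[0,1]\to[0,\infty]$ with $f(1)=0$ such that $x\&y=f^{(-1)}(f(x)+f(y))$ for the natural pseudo-inverse $f^{(-1)}$. Following Mostert--Shields, I would pick any $a\in(0,1)$, set $f(a)=1$, and then recursively solve the half-iteration equation $b\&b=c$ (whose solvability and uniqueness rest on continuity and strict monotonicity) to define $f$ on a dyadic orbit of $a$, then extend by monotone continuity to all of $[0,1]$.

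Once the additive generator $f$ is in hand, the classification splits according to whether $f(0)$ is finite or infinite. If $f(0)=\infty$, the map $g(x)=\exp(-f(x))$ is an order-isomorphism $[0,1]\to[0,1]$ intertwining $\&$ with the product t-norm $\&_P$. If $f(0)<\infty$, the map $h(x)=1-f(x)/f(0)$ is an order-isomorphism $[0,1]\to[0,1]$ intertwining $\&$ with the {\L}ukasiewicz t-norm $\&_\L$; the truncation built into $f^{(-1)}$ corresponds exactly to $\max\{x+y-1,0\}$. In either case one verifies that the rescaling is a quantale homomorphism. The principal obstacle is the construction of the additive generator: the recursive half-iteration argument is delicate because solvability of $b\&b=c$ must be controlled uniformly, and the dyadic values must patch together into a continuous monotone function without gaps. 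This is where the topological-semigroup machinery of Mostert--Shields for compact connected arcs with a continuous cancellative operation is essential, and I would import their theorem rather than redevelop it from scratch.
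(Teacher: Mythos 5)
Your outline is correct, but note that the paper offers no proof of this theorem at all: it is imported wholesale from Mostert--Shields and from Klement--Mesiar--Pap, which is why the statement carries only citations. What you have written is the standard argument from exactly those sources, and the routine part of it checks out: the idempotent set $E$ is closed by continuity, the complement decomposes into maximal open intervals with idempotent endpoints, the identity $e\&x=\min\{e,x\}$ for idempotent $e$ (Equation (2.1) of the paper) shows $[a_-,a^+]$ is closed under $\&$ with unit $a^+$, and the affine rescaling is a quantale isomorphism since any order isomorphism of closed intervals preserves arbitrary joins. Your classification of the Archimedean case by whether the additive generator satisfies $f(0)<\infty$ (nilpotent, hence {\L}ukasiewicz) or $f(0)=\infty$ (strict, hence product) is also the standard dichotomy, and the intertwining maps $\exp(-f(x))$ and $1-f(x)/f(0)$ do what you claim. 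The only substantive content you do not supply is the existence of the additive generator itself, and you are right that this is where all the difficulty lives; since you explicitly defer that step to Mostert--Shields, your proposal is no less complete than the paper's own treatment, which defers the entire theorem. There is no gap relative to what the paper does.
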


Now we present some facts of  continuous t-norms for later use.

\begin{lem}\label{t-norm} Let $\&$ be a continuous t-norm on $[0,1]$ and $a,b\in[0,1]$. Then for any $\varepsilon>0$, there exists some $\delta>0$ such that $a\&((a-\delta)\ra b)<\min\{a,b\}+\varepsilon$.\end{lem}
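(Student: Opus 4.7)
The plan is to split on the order of $a$ and $b$. When $a\le b$, $\min\{a,b\}=a$ and any $\delta>0$ satisfies $a-\delta\le a\le b$, so $(a-\delta)\to b=1$ by the definition of the residuation, giving $a\&((a-\delta)\to b)=a<a+\varepsilon=\min\{a,b\}+\varepsilon$. Thus this case is free and any positive $\delta$ works.

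The substantive case is $a>b$, so $\min\{a,b\}=b$. Because every continuous t-norm is divisible by definition, Lemma \ref{GL cond}(4) gives $a\&(a\to b)=a\wedge b=b$. It therefore suffices to prove that $a\&((a-\delta)\to b)\downarrow b$ as $\delta\downarrow 0$ and then pick any $\delta$ for which the value already lies below $b+\varepsilon$.

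The heart of the argument is the left-continuity in the first variable of the implication $x\mapsto x\to b$. If $x_n\uparrow x$, let $L=\inf_n(x_n\to b)$. Monotonicity of $\to$ in the first variable gives $L\ge x\to b$. For the reverse, $L\le x_n\to b$ implies $x_n\& L\le x_n\&(x_n\to b)\le b$ for every $n$, so joint continuity of $\&$ yields $x\& L\le b$ in the limit, hence $L\le x\to b$ by residuation and equality holds. Applying this with $x_n=a-\delta_n$ for any sequence $\delta_n\downarrow 0$ shows $(a-\delta)\to b\downarrow a\to b$, and composing with continuity of $a\&{-}$ gives $a\&((a-\delta)\to b)\downarrow a\&(a\to b)=b$. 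The claim follows immediately.

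I do not anticipate any substantive obstacle. The only ingredients are divisibility, which identifies $a\&(a\to b)$ with $\min\{a,b\}$; joint continuity of $\&$; and the residuation adjunction $x\& y\le b\Leftrightarrow y\le x\to b$, which is what converts continuity of $\&$ into left-continuity of $x\mapsto x\to b$. The case split at the beginning is the only place where one has to be a little careful, since the convergence statement in the hard case has no analogue in the easy case (there the value is constant once $\delta$ is positive).
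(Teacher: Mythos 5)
Your proof is correct and follows essentially the same route as the paper's: both rest on the identity $\bigwedge_{\delta>0}((a-\delta)\ra b)=a\ra b$ together with continuity of $a\&-$ and the divisibility identity $a\&(a\ra b)=\min\{a,b\}$. The initial case split on $a\leq b$ versus $a>b$ is harmless but unnecessary, since the convergence argument you give for $a>b$ works verbatim in both cases.
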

\begin{proof}
Since $a\&-:[0,1]\lra [0,1]$ is uniformly continuous, there exists some $\varepsilon_0>0$ such that $$|x-y|\leq\varepsilon_0\Rightarrow |a\&x -a\&y|<\varepsilon$$ for all $x,y\in[0,1]$. Since $$\bw_{\lambda>0} ((a-\lambda)\ra b)=a\ra b,$$ there is some $\delta>0$ such that $$((a-\delta)\ra b)-(a\ra b)<\varepsilon_0,$$ then $$a\&((a-\delta)\ra b)< a\&(a\ra b)+\varepsilon =\min\{a,b\}+\varepsilon,$$ completing the proof.
\end{proof}

\begin{lem}
Suppose $\&$ is a continuous t-norm on $[0,1]$. If $a\in [0,1]$ is non-idempotent, then there exists some  $h>0$ such that  $$f:[a-h, a+h]\times[a_-,a^+]\lra [0,1], \quad f(x,y)=\min\{x\ra y, a^+\}$$ is continuous, where, $a_-$ denotes the biggest idempotent element of $\&$  that is smaller than $a$, and $a^+$ denotes the least idempotent element  of $\&$ that is bigger than $a$. \end{lem}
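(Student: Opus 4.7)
The approach I would take is to reduce this question to the well-known continuity properties of the {\L}ukasiewicz and product implications, exploiting the structural decomposition in Theorem~\ref{ord sum}. That theorem furnishes a quantale isomorphism $\varphi\colon [0,1] \lra [a_-,a^+]$ from either $([0,1],\&_{\L},1)$ or $([0,1],\&_P,1)$ onto the subquantale $([a_-,a^+], \&, a^+)$. Because $\varphi$ is a quantale isomorphism on a chain, it is automatically an order-isomorphism, and hence a homeomorphism for the Euclidean topology; in particular the implication $\Rightarrow$ computed inside $([a_-,a^+],\&,a^+)$ inherits, via $\varphi$, the continuity properties of $\ra_{\L}$ or $\ra_P$ listed in Example~\ref{continuity of implication}.

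The key technical step is to identify, for every $(x,y) \in [a_-,a^+]\times[a_-,a^+]$, the quantity $\min\{x\ra y, a^+\}$ with this subquantale implication $x \Rightarrow y$. When $x \leq y$, the full-quantale implication $x\ra y$ equals $1$, its minimum with $a^+$ is $a^+$, and $a^+$ is exactly the top of $[a_-,a^+]$, so $x \Rightarrow y = a^+$ and both sides agree. When $x > y$, I would examine $x\&z$ for $z$ outside $[a_-,a^+]$: using idempotency of $a_-$ together with \eqref{tensor with an idempoent}, $x\&z = (x\&a_-)\&z = a_-\&z = z$ whenever $z \leq a_-$, while idempotency of $a^+$ gives $x\&z \geq x\&a^+ = x > y$ whenever $z \geq a^+$. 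Consequently $\sup\{z \in [0,1] : x\&z \leq y\}$ lies in the half-open interval $[a_-,a^+)$ and coincides with the corresponding supremum computed inside the subquantale, that is, with $x \Rightarrow y$; since this value is strictly below $a^+$, clipping by $a^+$ changes nothing. This identification is the main obstacle: one must analyse $x\&z$ on both sides of $[a_-,a^+]$ and then verify that the full-quantale supremum defining $x\ra y$ is already realised inside $[a_-,a^+]$.

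Finally, I would pick $h > 0$ small enough that $a_- < a-h$ and $a+h < a^+$, which is possible because $a_- < a < a^+$. In the {\L}ukasiewicz case the implication is continuous on $[0,1]\times[0,1]$, and in the product case the only point of discontinuity is $(0,0)$, which corresponds under $\varphi$ to $(a_-,a_-)$. The choice of $h$ forces $x \geq a-h > a_-$ throughout the rectangle $[a-h,a+h]\times[a_-,a^+]$, so the bad point is excluded, and the continuity of $f$ on this rectangle follows from the continuity of $\Rightarrow$ at every remaining point.
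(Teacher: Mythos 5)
Your proposal is correct and follows essentially the same route as the paper: restrict to the subquantale $([a_-,a^+],\&,a^+)$, invoke Theorem~\ref{ord sum}, and transfer the known continuity of the {\L}ukasiewicz and product implications, choosing $h$ so that $[a-h,a+h]\subseteq(a_-,a^+)$ avoids the product t-norm's discontinuity at the bottom corner. The only difference is that you spell out the verification that $\min\{x\ra y,a^+\}$ coincides with the implication computed inside $[a_-,a^+]$ (via the analysis of $x\&z$ for $z\leq a_-$ and $z\geq a^+$), a step the paper simply asserts; your argument there is sound.
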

\begin{proof}First of all,   the quantale $([a_-,a^+],\&,a^+)$ is either  isomorphic to $([0,1],\&_{\L},1)$ or to  $([0,1],\&_P,1)$ by Theorem \ref{ord sum}. Take $h>0$ with the condition that $[a-h,a+h]\subseteq (a_-,a^+)$. Then $h$ satisfies the requirement. To see this, let $T$ be the t-norm on $[a_-,a^+]$ obtained by restricting $\&$ to $[a_-,a^+]^2$. Then $$f(x,y)=x\stackrel{T}{\ra}y=\bv\{z\in[a_-,a^+]\mid x\&z\leq y\}$$ for all $(x,y)\in[a-h, a+h]\times[a_-,a^+]$. Therefore, $f(x,y)$ is continuous since  $x\stackrel{T}{\ra}y$ is continuous on $[a-h, a+h]\times[a_-,a^+]$ (see Example \ref{continuity of implication}).
\end{proof}

\begin{lem}\label{uniformly continuous}
If $\&$ is a continuous t-norm on $[0,1]$, then for any $a\in (0,1]$ and $\varepsilon>0$, there exists some $\delta>0$ such that $$((a+\delta)\ra(a-\delta))\&(c-\delta)\geq c-\varepsilon$$ for all $c\in [0,a]$. \end{lem}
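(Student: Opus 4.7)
My plan is to show that $((a+\delta)\ra(a-\delta))\&(c-\delta)$ converges to $c$ as $\delta\to 0^+$, uniformly in $c\in[0,a]$; one then chooses $\delta$ small enough to drive the gap below $\varepsilon$, and also $\delta\leq\varepsilon$ to cover the boundary case $c<\delta$ (where $c-\delta$ is read as $0$ and the inequality becomes the trivial $0\geq c-\varepsilon$). I will split the argument according to whether $a$ is idempotent.

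If $a$ is idempotent (in particular when $a=1$, with the convention $a+\delta=1$ there), then formula (\ref{implication around an idempotent}) applied with $b=a-\delta<a\leq a+\delta$ gives $(a+\delta)\ra(a-\delta)=a-\delta$, so the expression of interest reduces to $(a-\delta)\&(c-\delta)$. Uniform continuity of $\&$ on $[0,1]^2$, together with (\ref{tensor with an idempoent}) which yields $a\&c=\min\{a,c\}=c$ for $c\leq a$, delivers the required uniform convergence.

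If $a$ is non-idempotent, Theorem \ref{ord sum} supplies bracketing idempotents $a_-<a<a^+$ for which the sub-quantale $([a_-,a^+],\&,a^+)$ is isomorphic to $([0,1],\&_\L,1)$ or $([0,1],\&_P,1)$. I will first verify that $(a+\delta)\ra(a-\delta)\leq a^+$ once $a+\delta<a^+$: any candidate $z>a^+$ would give $(a+\delta)\&z\geq(a+\delta)\&a^+=a+\delta$ by (\ref{tensor with an idempoent}), exceeding $a-\delta$. Hence $(a+\delta)\ra(a-\delta)$ coincides with the value at $(a+\delta,a-\delta)$ of the function $f(x,y)=\min\{x\ra y,a^+\}$ furnished by the preceding lemma, which is continuous on a neighborhood of $(a,a)$ and satisfies $f(a,a)=\min\{1,a^+\}=a^+$. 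Consequently $(a+\delta)\ra(a-\delta)\to a^+$ as $\delta\to 0$, and pairing this with uniform continuity of $\&$ plus idempotency of $a^+$ (whence $a^+\&c=c$ for $c\leq a<a^+$) again yields uniform convergence to $c$.

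The main obstacle I anticipate is extracting a uniform-in-$c$ bound from the $c$-free limit $(a+\delta)\ra(a-\delta)\to a^+$ in the non-idempotent case. The remedy is to split the error as $|((a+\delta)\ra(a-\delta))\&(c-\delta)-a^+\&(c-\delta)|+|a^+\&(c-\delta)-a^+\&c|$, controlling the first summand by the modulus of uniform continuity of $\&$ applied to $|(a+\delta)\ra(a-\delta)-a^+|$, and the second summand by $\delta$ since $a^+\&(c-\delta)=c-\delta$ by idempotency of $a^+$. Both tend to $0$ independently of $c$, which is exactly the uniformity needed.
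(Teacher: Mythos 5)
Your proof is correct and follows essentially the same route as the paper's: the same case split on whether $a$ is idempotent, Equation (\ref{implication around an idempotent}) plus uniform continuity of $\&$ in the idempotent case, and the continuity of $(x,y)\mapsto\min\{x\ra y,a^+\}$ from the preceding lemma together with idempotency of $a^+$ in the non-idempotent case. The only cosmetic difference is that in the second case you extract the $c$-free limit $(a+\delta)\ra(a-\delta)\to a^+$ and then apply a triangle-inequality split, whereas the paper packages the same ingredients as uniform continuity of the ternary map $(x,y,z)\mapsto\min\{x\ra y,a^+\}\&z$ on a compact box.
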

\begin{proof}
\textbf{Case 1}. $a$ is idempotent. The binary function $-\&-$ is continuous on $[0,1]^{2}$, hence uniformly continuous. Thus, there exists $\delta >0$ such that for all $(x_i,y_i)\in [0,1]^{2}$ $(i=1,2)$, $$|x_1-x_2|, |y_1-y_2|\leq\delta \Rightarrow |x_1\& y_1-x_2\& y_2|<\varepsilon.$$
Then   $$((a+\delta )\ra(a-\delta ))\&(c-\delta )=(a-\delta )\&(c-\delta )\geq a\&c-\varepsilon=c-\varepsilon$$  since $a$ is idempotent.

\textbf{Case 2}. $a$ is non-idempotent. Let $a_-, a^+$ be assumed as in the above lemma. Then for any $h>0$ with the condition that $[a-h,a+h]\subseteq (a_-,a^+)$, the function  $$ [a-h, a+h]\times[a_-,a^+]\lra [0,1], \quad  (x,y)\mapsto\min\{x\ra y, a^+\}$$ is continuous. Therefore, the ternary function $$ [a-h, a+h]\times[a_-,a^+]\times[0,a^+]\lra [0,1], \quad (x,y,z)\mapsto\min\{x\ra y, a^+\}\&z$$ is continuous, hence uniformly continuous. Since $a^+\&z=\min\{a^+,z\}=z$ for all $z\leq a^+$   by virtue of Equation (\ref{tensor with an idempoent}), it follows that $$\min\{x\ra y, a^+\}\&z=\min\{(x\ra y)\&z, a^+\&z\}=\min\{(x\ra y)\&z, z\}=(x\ra y)\&z,$$  hence  there is some $0<\delta< h$ such that $$|(x_1\ra y_1)\&z_1-(x_2\ra y_2)\&z_2|<\varepsilon$$ for any $(x_i,y_i,z_i)\in [a-h, a+h]\times[a_-,a^+]\times[0,a^+]$ $(i=1,2)$ with
 $|x_1-x_2|, |y_1-y_2|, |z_1-z_2|\leq\delta$.
  In particular,  $$((a+\delta)\ra(a-\delta))\&(c-\delta)\geq (a\ra a)\&c-\varepsilon=c-\varepsilon $$ for all $c\in [0,a]$.
\end{proof}

There is still another notion that will be needed in this paper, that of quantaloid-enriched categories. A
quantaloid is to a quantale what a groupoid is to a group. Precisely,   a  quantaloid \cite{Rosenthal1996} is a category $\mathcal{Q}$ such that the set $\mathcal{Q}(a,b)$ of the arrows from $a$ to $b$ is a complete lattice for all objects $a,b$ in $\mathcal{Q}$; and that the composition $\circ$ preserves suprema in both variables, i.e.,
$$  \alpha \circ \bigvee_{i \in I} \beta_i =\bigvee_{i \in I}(\alpha \circ \beta_i ), \ \ \Big(\bigvee_{j \in J} \alpha_j\Big)\circ \beta =\bigvee_{j \in J}(\alpha_j\circ \beta)$$
for  all  $\alpha, \alpha_{j} \in \mathcal{Q}(b,c)$ and  $\beta, \beta_i \in  \mathcal{Q}(a,b).$
The bottom and top element  of $\mathcal{Q}(a,b)$ are denoted by $\bot_{a,b}$ and $\top_{a,b}$, respectively;   the identity arrow on an object $a$ is denoted by $1_{a}$.

Given a quantaloid $\CQ$ and $\CQ$-arrows $\alpha:b\lra c$ and $\beta:a\lra b$, there are two adjunctions
\begin{align*}
-\circ \beta\dashv -\swarrow \beta:&\ \CQ(b,c)\lra \CQ(a,c),\\
\alpha\circ -\dashv \alpha\searrow -:&\ \CQ(a,b)\lra \CQ(a,c)
\end{align*} determined by the adjoint property
$$\alpha\leq \gamma\swarrow \beta \Longleftrightarrow  \alpha\circ\beta\leq \gamma  \Longleftrightarrow  \beta\leq \alpha\searrow \gamma.$$
  The right adjoints $\swarrow,\searrow$ will   be called the \emph{left} and \emph{right implications}, respectively.

Suppose that
$\mathcal{Q}$ is a quantaloid. A $\mathcal{Q}$-category\footnote{The terminologies  adopted here are not exactly  the same as in our main reference,  Stubbe \cite{Stubbe2005}, on quantaloid-enriched categories. Our $\CQ$-categories and $\CQ$-distributors are exactly the $\CQ^{\rm op}$-categories and   $\CQ^{\rm op}$-distributors in the sense of Stubbe, where $\CQ^{\rm op}$ is the quantaloid obtained by reversing the 1-morphisms in $\CQ$. The difference arises in the interpretation of $\mathbb{A}(x,y)$: it is interpreted as the hom-arrow from $y$ to $x$ in \cite{Stubbe2005}, but from $x$ to $y$ here. Our terminologies agree with that in \cite{Rosenthal1996,Walters81}.} \cite{Stubbe2005}
 $\mathbb{A}$ consists of the following data: \begin{itemize} \item a set $\mathbb{A}_0$ of objects; \item    a map
$t$ from $\mathbb{A}_0$ to the set $\CQ_0$ of objects in $\mathcal{Q}$, which is called the type function;   \item an element $\mathbb{A}(x,y)\in \mathcal{Q}(tx,ty)$ for each pair $(x,y)$ in $\bbA_0$. \end{itemize} These data are required to satisfy the following conditions:
\begin{enumerate}\item[(1)] $1_{tx}\leq \mathbb{A}(x,x)$ for all $x\in \mathbb{A}_0$;
\item[(2)] $\mathbb{A}(y,z)\circ\mathbb{A}(x,y)\leq\mathbb{A}(x,z)$ for all $x,y,z\in \mathbb{A}_0$.
\end{enumerate}

A quantale is exactly a quantaloid with only one object. Interestingly, given a quantale $\CQ=(Q,\&,1)$, one can construct a quantaloid ${\sf D}(\CQ)$, called the quantaloid of diagonals in $\CQ$,\footnote{This construction makes sense for any quantaloid, not only for those with one object, see Stubbe \cite{Stubbe2014}.}  as follows:
\begin{itemize}
\item objects: elements $a,b,c,...,$ in $Q$;
\item  morphisms: ${\sf D}(\CQ)(a,b)=\{\alpha\in Q \mid (\alpha\swarrow  a)\& a=\alpha=b\&(b\searrow  \alpha)\}$;
\item composition: $\beta \circ \alpha=\beta\&(b\searrow  \alpha)=(\beta\swarrow  b)\& \alpha$ for all
$\alpha\in {\sf D}(\CQ)(a,b)$ and $\beta\in {\sf D}(\CQ)(b,c)$;
\item the unit $1_a$ of ${\sf D}(\CQ)(a,a) $ is $a$;
\item the partial order on ${\sf D}(\CQ)(a,b)$ is inherited from $Q$.
\end{itemize}

It is easy to see that if $\CQ$ is divisible, then ${\sf D}(\CQ)(a,b)=\{\alpha\in Q\mid \alpha\leq a\wedge b\}$.

A quantale $\CQ=(Q,\&,1)$, considered as a one-object category, can be treated as a full subcategory of ${\sf D}(\CQ)$ by identifying the only object of $\CQ$ with the object $1$ in ${\sf D}(\CQ)$. Therefore, $\CQ$-categories in the sense of \cite{Wagner1997}, which are sets endowed with fuzzy orders from the viewpoint of fuzzy set theory,  are a special kind of ${\sf D}(\CQ)$-categories.

\section{Ordered fuzzy sets valued in a quantale}
Given a quantale $\CQ=(Q,\&,1)$, following \cite{Hoehle2015,Hoehle2011,Pu2012,Tao2014}, an ordered fuzzy set valued in $\CQ$ is defined to be a category enriched over the quantaloid ${\sf D}(\CQ)$ of diagonals in $\CQ$.\footnote{It should be warned that our definition of ordered fuzzy sets are not exactly the same as that in H\"{o}hle \cite{Hoehle2015,Hoehle2011}. The difference arises in the definition of quantaloid-enriched categories. Since the papers \cite{Hoehle2015,Hoehle2011}  follow the terminologies   of Stubbe \cite{Stubbe2005}, an ordered fuzzy set defined there is exactly  a  ${\sf D}(\CQ)^{\rm op}$-category  (not a ${\sf D}(\CQ)$-category) in our sense. We note in pass that for each quantale $\CQ=(Q,\&,1)$, the quantaloid ${\sf D}(\CQ)^{\rm op}$ is isomorphic to ${\sf D}(\CQ^{\rm op})$, where $\CQ^{\rm op}$ refers to  the quantale $(Q,\&^{\rm op}, 1)$, where $a\&^{\rm op}b=b\&a$ for all $a,b\in Q$.}  Explicitly,

\begin{defn}\label{ofs} Let $\CQ=(Q,\&,1)$ be a quantale. An ordered fuzzy set valued in $\CQ$ consists of the following data: \begin{itemize}
\item a ($Q$-valued) fuzzy set $(\bbA_0,t)$, i.e., a set $\bbA_0$ and a  membership  function $t:\bbA_0\lra Q$; \item an element $\bbA(x,y)\in {\sf D}(\CQ)(tx,ty)$ for any pair $(x,y)$ of elements in $\bbA_0$,   measuring the degree that $x$ precedes $y$.
    \end{itemize} These data are required to satisfy the following conditions: for all $x, y, z\in\bbA_0$,
\begin{enumerate}[(1)]
\item  $tx\leq \bbA(x,x)$;
\item $((\bbA(y,z)\swarrow ty))\&\bbA(x,y) \leq\bbA(x,z)$. \end{enumerate} \end{defn}

The conditions (1)   and (2) express respectively reflexivity and transitivity. Following the terminologies of quantaloid-enriched categories, the membership function $t$ will  also be called the type function, and $tx$   the type (instead of membership degree) of $x$. We often write $\bbA$,  instead of $(\bbA_0,t,\bbA)$, for an ordered fuzzy set.

The following examples provide us two important kinds of ordered fuzzy sets.

\begin{exmp}\label{PMS}Let $\CQ$ be Lawvere's quantale  $([0,\infty]^{\rm op},+,0)$. Then a ${\sf D}(\CQ)$-category is essentially a pair $(X,p)$, where,  $X$ is a set  and
$p:X\times X\lra [0,+\infty]$ is a map such that for
all $x,y,z\in X$,
\begin{enumerate}
\item[(1)] $p(x,x),p(y,y)\leq p(x,y)$;
\item[(2)] $p(y,z)+ (p(x,y)-p(y,y))\geq p(x,z)$.
\end{enumerate} Here we take by convention that $\infty-\infty=0$.
These enriched categories  are called generalized partial metric spaces in \cite{Pu2012} because they extend the notion of partial metric spaces in \cite{Kuenzi2006,Matthews1994}. \end{exmp}

\begin{exmp} Let $H$ be a frame. Then  $\Omega=(H,\wedge,1)$ is a divisible quantale,   a ${\sf D}(\Omega)$-category  is  exactly an $\Omega$-poset in the sense of \cite{Borceux1998}. Precisely, a ${\sf D}(\Omega)$-category is  a set $\bbA_0$  together with a map $\bbA: \bbA_0\times \bbA_0\lra H$   such that for
all $x,y,z\in \bbA_0$,
\begin{enumerate}
\item[(1)] $\bbA(x,y)\leq \bbA(x,x)\wedge\bbA(y,y)$;
\item[(2)] $\bbA(y,z)\wedge\bbA(x,y)\leq \bbA(x,z)$.
\end{enumerate} Moreover, Cauchy complete and symmetric ${\sf D}(\Omega)$-categories are exactly the sheaves on $H$ \cite{Betti-Carboni82,Walters81}.
\end{exmp}

Let $\CQ=(Q,\&,1)$ be a quantale.   An element $x$ in an ordered $\CQ$-fuzzy set   $\bbA$ is \emph{global} if $tx=\bbA(x,x)\geq1$. An ordered $\CQ$-fuzzy set $\bbA$ is \emph{global} if all of its elements are global. It is clear that if $\CQ$ is integral, i.e., the unit $1$  of $\CQ$ is the top element, then $\CQ$-categories in the sense of \cite{Wagner1997} are precisely those global ${\sf D}(\CQ)$-categories.

In the rest of this paper, we are mainly concerned with ordered fuzzy sets valued in the quantale obtained by endowing the interval $[0,1]$ with a continuous t-norm, so, we write down the details of such ordered fuzzy sets (in a slightly different way) for later use.

Let $\&$ be a continuous t-norm on $[0,1]$. An order on a fuzzy set $(\bbA_0, t)$ (valued in the quantale $\CQ=([0,1],\&,1)$) is a map $\bbA:\bbA_0\times\bbA_0\lra [0,1]$ such that for all $x, y, z\in\bbA_0$,
\begin{enumerate}[(i)]
\item  $\bbA(x,y)\leq tx\wedge ty$;
\item  $tx\leq \bbA(x,x)$;
\item $(ty\ra\bbA(y,z))\&\bbA(x,y) \leq\bbA(x,z)$.
\end{enumerate}

Condition (i) is  the requirement   $\bbA(x,y)\in {\sf D}(\CQ)(tx,ty)$ in Definition \ref{ofs}, since ${\sf D}(\CQ)(tx,ty)=\{\alpha\in [0,1]\mid \alpha\leq tx\wedge ty\}$ in this case.  It is easily inferred from (i) and (ii) that $tx=\bbA(x,x)$ for all $x\in\bbA_0$.

\section{Flat and Cauchy completeness}\label{section 4}
Flat and Cauchy completeness make sense for any quantaloid-enriched category. To state the definition, we need some preparations.

In this section, $\CQ$ always denotes a quantaloid unless otherwise specified. A $\mathcal{Q}$-distributor $\phi:\mathbb{A}\oto \mathbb{B}$ between $\CQ$-categories is a map $\phi$ from $\mathbb{A}_0\times\mathbb{B}_0$ to the set of morphisms in $\mathcal{Q}$ subject to the following conditions:
\begin{enumerate}[(1)] \item $\forall  x\in \mathbb{A}_0,\forall y\in \mathbb{B}_0$,
$\phi(x,y)\in\mathcal{Q}(tx,ty)$;
\item $\forall x\in \mathbb{A}_0,\forall y,y'\in \mathbb{B}_0$;
$\mathbb{B}(y',y)\circ\phi(x,y')\leq\phi(x,y)$;
\item  $\forall x,x'\in \mathbb{A}_0,\forall y\in \mathbb{B}_0$, $\phi(x',y)\circ
\mathbb{A}(x,x')\leq\phi(x,y)$. \end{enumerate}

Given $\mathcal{Q}$-distributors
$\mathbb{A}\stackrel{\phi}{\oto}\mathbb{B}$ and $\bbB\stackrel{\psi}{\oto}\bbC$, the
composite $ \psi\circ\phi:\mathbb{A}{\oto}\bbC$ is given by
$$\forall x\in \mathbb{A}_0,\forall z\in \bbC_0,\psi\circ\phi(x,z)
=\bv_{y\in \mathbb{B}_0}\psi(y,z)\circ\phi(x,y).$$ $\mathcal{Q}$-categories and
distributors constitute a category $\mathcal{Q}$-{\bf Dist},  it is moreover
a quantaloid \cite{Stubbe2005}. In particular, $\mathcal{Q}$-{\bf Dist}$(\mathbb{A},\mathbb{B})$ (under pointwise order inherited from $\mathcal{Q}$) is a complete lattice for any $\mathcal{Q}$-categories $\mathbb{A}$ and $\mathbb{B}$. The identity arrow of $\mathcal{Q}$-{\bf Dist}$(\mathbb{A},\bbA)$ in $\mathcal{Q}$-{\bf Dist} is given by   $\bbA: \bbA\oto \bbA$. For $\CQ$-distributors $\phi:\mathbb{A}\oto\mathbb{B}$, $\psi:\mathbb{B}\oto\mathbb{C}$ and $\eta:\mathbb{A}\oto\mathbb{C}$,  the left
implication $\eta\swarrow\phi: \mathbb{B}\oto\mathbb{C}$ and right implication $\psi\searrow\eta:\mathbb{A}\oto\mathbb{B}$
are calculated as follows:
\begin{align*}\forall y\in\mathbb{B}_0, z\in\mathbb{C}_0, (\eta\swarrow\phi)(y,z)=\bw_{x\in\bbA_0}\eta(x,z)\swarrow \phi(x,y);\\ \forall x\in\mathbb{A}_0, y\in\mathbb{B}_0, (\psi\searrow\eta)(x,y)=\bw_{z\in\mathbb{C}_0}\psi(y,z) \searrow\eta(x,z).\end{align*}

A pair of $\mathcal{Q}$-distributors $\psi:\mathbb{A}\oto\mathbb{B}$ and $\phi: \mathbb{B}\oto\mathbb{A}$ forms an adjunction in $\mathcal{Q}$-{\bf Dist} if $$\bbA\leq\phi\circ\psi \quad {\rm and} \quad \psi\circ\phi\leq\bbB.$$
In this case, $\psi$ is called a left adjoint of $\phi$ and $\phi$ a right adjoint of $\psi$.

For each object $a$ of a quantaloid $\mathcal{Q}$, write $*_a$ for the $\mathcal{Q}$-category with exactly one object, say $*$,  such that  $t*=a$ and ${\rm hom}(*,*)=1_a$. It is clear that $\mathcal{Q}$-${\bf Dist}(*_a,*_b)$ is essentially the complete lattice $\CQ(a,b)$, so we won't distinguish   $\CQ(a,b)$ and $\mathcal{Q}$-${\bf Dist}(*_a,*_b)$ in the sequel.

A {\it weight} (or, a presheaf) with type $t\phi$ on a $\mathcal{Q}$-category $\mathbb{A}$ is a $\mathcal{Q}$-distributor $\phi:\mathbb{A}\oto *_{t\phi}$. For a weight $\phi:\mathbb{A}\oto*_{t\phi}$, we often write $\phi(x)$ instead of $\phi(x,*)$ for short. All weights on $\mathbb{A}$ form a $\mathcal{Q}$-category $\mathcal{P}\mathbb{A}$ with $$ \CP\mathbb{A}(\phi,\varphi)=\varphi\swarrow\phi=\bw_{x\in\mathbb{A}_0}\varphi(x)\swa\phi(x).$$

For each object $x$ in a $\mathcal{Q}$-category $\mathbb{A}$, there is a distributor  $\y(x):\mathbb{A}\oto *_{tx}$  given by $\y(x)(y)=\mathbb{A}(y,x)$.

\begin{lem}[Yoneda lemma]\label{Yoneda} (\cite{Stubbe2005}) For all $x\in\mathbb{A}_0$ and $\phi\in\CP\mathbb{A}$, $\CP\mathbb{A}(\y(x),\phi)=\phi(x)$.\end{lem}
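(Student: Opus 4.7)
The plan is to unfold both sides directly against the definitions of $\CP\bbA$, of the yoneda distributor $\y(x)$, and of the left implication $\swarrow$ in $\CQ$-\textbf{Dist}, and then verify the two inequalities.

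First I would rewrite the left-hand side. By the definition of the $\CQ$-category $\CP\bbA$,
\[
\CP\bbA(\y(x),\phi)=\phi\swarrow\y(x)=\bw_{y\in\bbA_0}\phi(y)\swarrow \y(x)(y)=\bw_{y\in\bbA_0}\phi(y)\swarrow\bbA(y,x),
\]
reducing the claim to the equality $\bw_{y\in\bbA_0}\phi(y)\swarrow\bbA(y,x)=\phi(x)$.

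For the inequality ``$\leq$'', I would specialize the meet at $y=x$ to obtain the upper bound $\phi(x)\swarrow\bbA(x,x)$. Reflexivity of $\bbA$ gives $1_{tx}\leq\bbA(x,x)$, and since $-\swarrow-$ is antitone in its second variable (being a right adjoint to a monotone composition), one has $\phi(x)\swarrow\bbA(x,x)\leq\phi(x)\swarrow 1_{tx}=\phi(x)$; the last equality is the standard fact $\gamma\swarrow 1_{tx}=\gamma$ coming from the adjunction $-\circ 1_{tx}\dashv-\swarrow 1_{tx}$.

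For the inequality ``$\geq$'', I would use that $\phi$ is a weight, hence a $\CQ$-distributor $\bbA\oto *_{t\phi}$. Condition~(3) in the definition of a distributor, applied with the pair of objects $y,x\in\bbA_0$, gives $\phi(x)\circ\bbA(y,x)\leq\phi(y)$ for every $y\in\bbA_0$. By the adjunction $-\circ\bbA(y,x)\dashv -\swarrow\bbA(y,x)$, this is equivalent to $\phi(x)\leq\phi(y)\swarrow\bbA(y,x)$. Taking the meet over $y$ yields $\phi(x)\leq\bw_{y\in\bbA_0}\phi(y)\swarrow\bbA(y,x)$, which is the desired inequality.

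There is no real obstacle here: the argument is a bookkeeping exercise in the adjoint calculus of a quantaloid, and the only points requiring care are (i) using the correct variable of $\swarrow$ for antitonicity, and (ii) matching the weight axiom (3) to the composition $\phi(x)\circ\bbA(y,x)$, which depends on reading $\y(x)(y)=\bbA(y,x)$ in the convention fixed in the footnote on page~3.
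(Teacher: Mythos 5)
Your proof is correct. The paper does not actually prove this lemma --- it is quoted from Stubbe's work on quantaloid-enriched categories --- but your argument is the standard one: unfold $\CP\bbA(\y(x),\phi)=\bw_{y}\phi(y)\swarrow\bbA(y,x)$, get ``$\leq$'' by evaluating at $y=x$ and using $1_{tx}\leq\bbA(x,x)$ with antitonicity of $\swarrow$ in its second argument, and get ``$\geq$'' from the distributor axiom $\phi(x)\circ\bbA(y,x)\leq\phi(y)$ via the adjunction $-\circ\bbA(y,x)\dashv-\swarrow\bbA(y,x)$. The type-checking and the convention $\y(x)(y)=\bbA(y,x)$ are handled consistently with the paper's (non-Stubbe) orientation of hom-arrows, so there is nothing to fix.
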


Dually, a {\it coweight} (or, a co-presheaf) with type $t\psi$ on $\mathbb{A}$ is a distributor $\psi:*_{t\psi}\oto\mathbb{A}$.

\begin{defn} (1) (\cite{Stubbe2005}) A weight $\phi:\mathbb{A}\oto*_{t\phi}$ is Cauchy if it is a right adjoint.

(2) (\cite{Tao2014})
A weight $\phi:\mathbb{A}\oto*_{t\phi}$ is flat if for each object $a$ in $\mathcal{Q}$, the map $$\mbox{$\phi\circ-:\mathcal{Q}${\rm -\bf Dist}$(*_a,\mathbb{A})\lra \mathcal{Q}(a,t\phi)$ }$$ preserves arbitrary finite meets (in particular, $\phi\circ-$ preserves the top element.). \end{defn}

\begin{prop}\label{Cauchy is flat}  Each Cauchy weight on any $\CQ$-category is flat.\end{prop}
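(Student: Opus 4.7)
The plan is to observe that the adjunction $\psi\dashv\phi$ witnessing Cauchyness of $\phi$ automatically produces, by postcomposition, an adjunction between the relevant hom-lattices with $\phi\circ-$ as its right adjoint; the statement then follows from the standard order-theoretic fact that right adjoints between complete lattices preserve arbitrary infima, and in particular all finite meets.

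To set this up, suppose $\phi:\mathbb{A}\oto*_{t\phi}$ is Cauchy. By definition it admits a left adjoint, so there is a $\mathcal{Q}$-distributor $\psi:*_{t\phi}\oto\mathbb{A}$ satisfying the unit inequality $1_{t\phi}\leq\phi\circ\psi$ in $\mathcal{Q}(t\phi,t\phi)$ and the counit inequality $\psi\circ\phi\leq\mathbb{A}$ in $\mathcal{Q}$-{\bf Dist}$(\mathbb{A},\mathbb{A})$. Fix an object $a$ of $\mathcal{Q}$. Postcomposition with $\psi$ defines a monotone map $\psi\circ-\colon\mathcal{Q}(a,t\phi)\lra\mathcal{Q}$-{\bf Dist}$(*_a,\mathbb{A})$, and I claim that it is left adjoint to $\phi\circ-\colon\mathcal{Q}$-{\bf Dist}$(*_a,\mathbb{A})\lra\mathcal{Q}(a,t\phi)$.

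To verify the defining biconditional, take $\alpha\in\mathcal{Q}(a,t\phi)$ and $\theta\in\mathcal{Q}$-{\bf Dist}$(*_a,\mathbb{A})$. If $\psi\circ\alpha\leq\theta$, then postcomposing with $\phi$ and invoking the unit yields $\alpha\leq(\phi\circ\psi)\circ\alpha=\phi\circ(\psi\circ\alpha)\leq\phi\circ\theta$; conversely, if $\alpha\leq\phi\circ\theta$, then postcomposing with $\psi$ and invoking the counit yields $\psi\circ\alpha\leq(\psi\circ\phi)\circ\theta\leq\mathbb{A}\circ\theta=\theta$. Once $\psi\circ-\dashv\phi\circ-$ is established, the right adjoint $\phi\circ-$ preserves arbitrary infima between the complete lattices $\mathcal{Q}$-{\bf Dist}$(*_a,\mathbb{A})$ and $\mathcal{Q}(a,t\phi)$, and in particular it preserves all finite meets, which is exactly the condition that $\phi$ be flat. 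The argument is entirely formal and I foresee no substantive obstacle; the only point worth emphasizing is the correct identification of $\psi\circ-$ as the left adjoint of $\phi\circ-$ and the careful use of the triangle inequalities of the underlying adjunction $\psi\dashv\phi$.
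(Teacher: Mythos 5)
Your proof is correct and follows essentially the same route as the paper: the paper's proof notes that $\phi\circ-=\psi\searrow-$ (the right adjoint of $\psi\circ-$ in $\mathcal{Q}$-$\mathbf{Dist}$) and concludes that it preserves arbitrary meets, while you simply verify that adjunction $\psi\circ-\dashv\phi\circ-$ directly from the unit and counit inequalities --- exactly the ``direct verification'' the paper alludes to.
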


\begin{proof} Suppose that $\phi:\bbA\oto*_a$ is a Cauchy weight on a $\CQ$-category $\bbA$ with a left adjoint given by $\psi:*_a\oto \bbA$.  Then  $\phi\circ-=\psi\searrow-$ (see e.g. \cite{Heymans2010}, a direct verification is also easy), hence $\phi\circ-$ preserves arbitrary meets, showing that $\phi$ is flat.
\end{proof}

\begin{defn}  A supremum of a weight $\phi:\mathbb{A} \oto *_{t\phi}$  is an element $x\in\bbA_0$ of type $t\phi$ such that $\bbA(x,y)=\CP\mathbb{A}(\phi,\y(y))$ for all $y\in\bbA_0$.\end{defn}

Suprema are special cases of weighted colimits in $\CQ$-categories \cite{Stubbe2005}. Actually,  a supremum of a weight $\phi:\mathbb{A} \oto *_{t\phi}$ is exactly the colimit of the identity functor $\bbA \lra\bbA$ weighted by the distributor $\phi:\mathbb{A} \oto *_{t\phi}$.

\begin{defn}Let  $\mathbb{A}$ be a $\mathcal{Q}$-category.
\begin{enumerate}
\item [\rm(1)] $\mathbb{A}$ is flat complete if each flat weight on $\mathbb{A}$  has a supremum.
\item [\rm(2)] $\mathbb{A}$ is Cauchy complete if each Cauchy weight on $\mathbb{A}$  has a supremum.
\end{enumerate}\end{defn}

The notion of Cauchy completeness dates back to the seminal work of Lawvere \cite{Lawvere1973}, where distributors are called bimodules. For more on Cauchy complete $\CQ$-categories and Cauchy completions the reader is referred to Stubbe \cite{Stubbe2005} and the bibliography therein.

It follows from Proposition \ref{Cauchy is flat} that flat completeness implies Cauchy completeness. In the terminology of category theory, what we call flat completeness should be called flat cocompleteness. However, following the tradition of order theory \cite{Gierz2003},  we choose the term \emph{flat completeness} here.

Since almost all results of this paper are proved for categories enriched in the quantaloid  ${\sf D}(\CQ)$, where $\CQ$ is the divisible quantale obtained by endowing the interval  $[0,1]$ with a continuous t-norm, we list here, for later use, some facts about ${\sf D}(\CQ)$-categories for a commutative  and divisible quantale  $\CQ=(Q,\&,1)$.

\textbf{Fact 1}. For any  weights $\phi$, $\varphi$ and coweight $\psi$ on a ${\sf D}(\CQ)$-category $\bbA$, it holds that  $$\phi\circ\psi=\bv_{x\in\bbA_0}(\bbA(x,x)\ra\phi(x)) \&\psi(x) =\bv_{x\in\bbA_0}\phi(x) \&(\bbA(x,x)\ra\psi(x))$$
and \begin{equation} \label{subsethood} \CP\mathbb{A}(\phi,\varphi)= t\varphi\wedge\bigwedge_{x\in\bbA_0}(\phi(x)\ra \varphi(x))\&t\phi.\end{equation}

\textbf{Fact 2}. For any  ${\sf D}(\CQ)$-category $\bbA$ and   $b\in Q$, the top element of $\CQ${\rm -\bf Dist}$(*_b,\mathbb{A})$ is $b\wedge t$ that maps $x\in\bbA_0$ to $b\wedge tx=b\wedge\bbA(x,x).$ Hence, a weight $\phi:\bbA\oto *_a$ is flat if and only if
\begin{itemize}
  \item   $\phi\circ(b\wedge t)=b\wedge a$ for all $b\in Q$;
  \item  $\phi\circ(\psi_1\wedge\psi_2) =(\phi\circ\psi_1)\wedge(\phi\circ\psi_2)$ for any coweights $\psi_1,\psi_2$ with $t\psi_1=t\psi_2$.
\end{itemize}

\textbf{Fact 3}.
For any ${\sf D}(\CQ)$-category $\mathbb{A}$, $$\phi:\bbA\oto*_\bot, \quad x\mapsto \bot$$  is a weight (of type $\bot$) on $\bbA$. This weight is called the \emph{trivial weight} on $\bbA$. The trivial weight is clearly  Cauchy, hence flat. Thus, every Cauchy complete  ${\sf D}(\CQ)$-category must have an  element of type $\bot$.  Elements of type $\bot$ in a ${\sf D}(\CQ)$-category are said to be \emph{isolated}.

\section{Yoneda completeness and bicompleteness}
Cauchy completeness and flat completeness make sense for categories enriched in any quantaloid. In this section, we introduce two kinds of completeness, Yoneda completeness and bicompleteness, that make sense for categories enriched in quantaloids of the form ${\sf D}(\CQ)$  with $\CQ=(Q,\&,1)$ being a quantale.

Yoneda completeness (a.k.a. liminf completeness), based on the notion of forward Cauchy nets,  was introduced in  \cite{Bonsangue1998,Wagner1994,Wagner1997} as an extension of directed completeness in the realm of quantale-enriched categories (i.e., sets with fuzzy orders).    Yoneda complete generalized metric spaces (i.e.,  categories enriched over  Lawvere's quantale $([0,\infty]^{\rm op},+,0)$) have been studied extensively in the literature, see e.g. \cite{Bonsangue1998,Goubault,Kuenzi2002,Vickers2005}. It is hard to extend this notion to categories enriched over an arbitrary quantaloid. But, as we shall see, for each quantale $\CQ=(Q,\&,1)$, there does exist a natural way to postulate  forward Cauchy nets, hence Yoneda completeness, in   ${\sf D}(\CQ)$-categories (i.e., ordered fuzzy sets).

Recall that a  net $\{a_\lam\}$ in a complete lattice $L$ is \emph{order convergent to $a\in L$} \cite{Birkhoff} if \[ \liminf a_\lam =
\bv_\lam\bw_{\mu\geq \lam}a_\mu = a= \bw_\lam\bv_{\mu\geq \lam}a_\mu =\limsup a_\lam.\]  In this case, $a$ is called the order-limit of $\{a_\lam\}$. It is known that the order convergence in a completely distributive lattice  coincides with that determined by its interval topology (\cite{Erne1980}, Corollary 10).

\begin{defn}\label{FCN}
Let   $\{x_\lam\}_{\lam\in\Lambda}$ be a net in a ${\sf D}(\CQ)$-category $\bbA$.  \begin{enumerate}[\rm(1)]
\item  $\{x_\lam\}_{\lam\in\Lambda}$ has a type if the net $\{\bbA(x_\lam,x_\lam)\}_{\lam\in\Lambda}$ in $Q$ is order convergent, i.e., $$\bigvee\limits_\lam\bigwedge\limits_{\mu\geq\lam}\bbA(x_\mu,x_\mu) =\bigwedge\limits_\lam\bigvee\limits_{\mu \geq\lam}\bbA(x_\mu,x_\mu).$$  The order-limit of $\{\bbA(x_\lam,x_\lam)\}_{\lam\in\Lambda}$ is called the type of the net $\{x_\lam\}_{\lam\in\Lambda}$.
\item $\{x_\lam\}_{\lam\in\Lambda}$ is forward Cauchy if the net $\{\bbA(x_\lam,x_\mu)\mid (\lam,\mu)\in\Lambda\times\Lambda,\lam\leq\mu\}$   is order convergent, i.e., $$\bigvee\limits_\lam\bigwedge\limits_{\nu\geq\mu\geq\lam}\bbA(x_\mu,x_\nu) =\bigwedge\limits_\lam\bigvee\limits_{\nu\geq\mu \geq\lam}\bbA(x_\mu,x_\nu).$$
\item  $\{x_\lam\}_{\lam\in\Lambda}$ is   biCauchy if the net $\{\bbA(x_\lam,x_\mu)\mid (\lam,\mu)\in\Lambda\times\Lambda\}$   is order convergent, i.e., $$\bigvee\limits_\lam\bigwedge\limits_{\nu, \mu\geq\lam}\bbA(x_\mu,x_\nu)= \bigwedge\limits_\lam\bigvee\limits_{\nu, \mu\geq\lam}\bbA(x_\mu,x_\nu).$$
\end{enumerate}\end{defn}

For a net $\{x_\lam\}_{\lam\in\Lambda}$  in a ${\sf D}(\CQ)$-category $\bbA$, one always has that \begin{align*}& ~ \bigvee\limits_\lam\bigwedge\limits_{\nu, \mu\geq\lam}\bbA(x_\mu,x_\nu)  \leq   \bigvee\limits_\lam\bigwedge\limits_{\nu\geq\mu \geq\lam}\bbA(x_\mu,x_\nu) \leq \bigvee\limits_\lam\bigwedge\limits_{\mu\geq\lam}\bbA(x_\mu, x_\mu) \\   \leq & ~\bigwedge\limits_\lam\bigvee\limits_{\mu \geq\lam}\bbA(x_\mu,x_\mu)  \leq \bigwedge\limits_\lam\bigvee\limits_{\nu\geq\mu \geq\lam}\bbA(x_\mu,x_\nu) \leq \bigwedge\limits_\lam\bigvee\limits_{\nu, \mu\geq\lam}\bbA(x_\mu,x_\nu),\end{align*}
it follows that a biCauchy net is forward Cauchy  and a forward Cauchy net has a type.
In particular, if $\{x_\lam\}$ is   forward Cauchy,  then
\begin{equation} \label{eq1} \bigvee_\lam\bigwedge_{\nu\geq\mu\geq\lam}\bbA(x_\mu, x_\nu)  =\bigwedge_\lam\bigvee_{\mu\geq\lam}\bbA(x_\mu,x_\mu); \end{equation}
if   $\{x_\lam\}$ is biCauchy  then
\begin{equation} \label{eq2} \bigvee_\lam\bigwedge_{\nu,\mu\geq\lam}\bbA(x_\mu,x_\nu)
 =\bigwedge_\lam\bigvee_{\mu\geq\lam}\bbA(x_\mu,x_\mu).  \end{equation}

\begin{rem}\label{FC=convergent} (1) If   the underlying complete lattice $Q$ of the quantale $\CQ$ is completely distributive, then the order convergence coincides with the convergence of the interval topology on $Q$ (see \cite{Erne1980}, Corollary 10), hence a net $\{x_\lam\}$ in a ${\sf D}(\CQ)$-category $\bbA$ is forward Cauchy (biCauchy, resp.) if and only if the net $\{\bbA(x_\lam,x_\mu)\mid (\lam,\mu)\in\Lambda\times\Lambda,\lam\leq\mu\}$ ($\{\bbA(x_\lam,x_\mu)\mid (\lam,\mu)\in\Lambda\times\Lambda\}$, resp.) converges  with respect to the interval topology on $Q$.
In particular, if $Q=[0,1]$, then a net $\{x_\lam\}$ in a ${\sf D}(\CQ)$-category $\bbA$ is forward Cauchy (biCauchy, resp.) if and only if the limit $\lim_{\nu\geq\mu}\bbA(x_\mu,x_\nu)$ ($\lim_{\nu,\mu}\bbA(x_\mu,x_\nu)$, resp.) exists (with respect to the standard  topology).

(2) A net $\{x_\lam\}_{\lam\in\Lambda}$ in a ${\sf D}(\CQ)$-category $\bbA$ is \emph{backward Cauchy} if   the net $\{\bbA(x_\lam,x_\mu)\mid (\lam,\mu)\in\Lambda\times\Lambda,\lam\geq\mu\}$   is order convergent. It is clear that a biCauchy net is both forward and backward Cauchy. Conversely, if the order convergence on $Q$ is topological, then a both forward and backward Cauchy net is biCauchy. So, it can be said that biCauchy is a ``symmetric" version of forward Cauchy.

 (3) Let $\CQ=(Q,\&,1)$ be an integral quantale (i.e., the unit $1$ is the top element in $Q$) and let  $\bbA$ be a $\CQ$-category, considered as a (global) ${\sf D}(\CQ)$-category. A net $\{x_\lam\}$ in $\bbA$ is forward Cauchy  if and only if $$\bigvee_\lam\bigwedge_{\nu\geq \mu\geq\lam}\bbA(x_\mu,x_\nu)=1;$$     $\{x_\lam\}$   is biCauchy if and only if $$\bigvee_\lam\bigwedge_{\nu, \mu\geq\lam}\bbA(x_\mu,x_\nu)=1.$$ So, for an integral quantale $\CQ$, the notion of forward Cauchy nets in  ${\sf D}(\CQ)$-categories extends that in $\CQ$-categories in the sense of \cite{Flagg1996,Hofmann2012,Wagner1997};  the notion of biCauchy nets in  ${\sf D}(\CQ)$-categories   extends that of Cauchy nets in  $\CQ$-categories  in the sense of  \cite{Hofmann2013,Hofmann2012,Kuenzi2002}. \end{rem}

\begin{exmp}\label{example of bicauchy net} Let $\CQ=(Q, \&, 1)$ be a divisible quantale and $A$ be a subset of $Q$. Define a ${\sf D}(\CQ)$-category $\bbA$ as follows: the objects of $\bbA$ are the elements in $A$;   for any $x,y\in A$, $\bbA(x,y)=x\wedge y$. Making use of the fact that the underlying lattice $Q$ of $\CQ$ is a frame (see Lemma \ref{GL cond}), it is easily verified that a  net $\{x_\lam\}$ in $\bbA$ is forward Cauchy if and only if it is biCauchy if and only if $\{x_\lam\}$ is order convergent in the lattice $Q$. \end{exmp}

\begin{defn}\label{Yoneda limit}
Let   $\bbA$  be a ${\sf D}(\CQ)$-category, $x\in \bbA_0$, and  $\{x_\lam\}$ a net in $\bbA$.
\begin{enumerate}
\item [\rm(1)] $x$ is   a Yoneda limit of   $\{x_\lam\}$ if $$ \bbA(x,x)= \bigvee_\lam\bigwedge_{\sigma\geq\lam}\bbA(x_\sigma, x_\sigma)$$ and
$$\bbA(x,y)= \bigvee_\lam\bigwedge_{\sigma\geq\lam} \bbA(x_\sigma,y)  $$  for all $y\in\bbA_0$.
\item [\rm(2)] $x$ is   a bilimit of  $\{x_\lam\}$   if
$$\bbA(x,x)=\bigvee_\lam\bigwedge_{\sigma\geq\lam}\bbA(x_\sigma,x)=\bigvee_\lam\bigwedge_{\sigma\geq\lam}\bbA(x,x_\sigma)
=\bigvee_\lam\bigwedge_{\si\geq\lam}\bbA(x_\si,x_\si).$$
\end{enumerate}
\end{defn}

It is clear  that if a net $\{x_\lam\}$ has a type and if $x$ is a Yoneda limit or a bilimit of $\{x_\lam\}$, then $\bbA(x,x)$ is the type of the net $\{x_\lam\}$.

\begin{defn} Let   $\bbA$ be a ${\sf D}(\CQ)$-category.
\begin{enumerate}
\item [\rm(1)] $\bbA$ is  Yoneda complete if every forward Cauchy net in $\bbA$ has a Yoneda limit.
\item [\rm(2)] $\bbA$ is  bicomplete if every biCauchy net in $\bbA$ has a bilimit.
\end{enumerate}\end{defn}

\begin{exmp}Let $\CQ$ and $\bbA$ be assumed as in Example \ref{example of bicauchy net}. Then $\bbA$ is Yoneda complete if and only if $\bbA$ is bicomplete if and only if $A\subseteq Q$ is closed with respect to limits of order convergence. In particular, if $Q$ is completely distributive, then $\bbA$ is Yoneda complete if and only if $A$ is a closed subset of $Q$ in the interval topology. \end{exmp}

The following proposition shows that Yoneda completeness implies bicompleteness if $\CQ$ is the interval $[0,1]$ equipped with a continuous t-norm. But, we do not know whether this is true in the general case.

\begin{prop}\label{LC implies bi-CC} Let $\CQ$ be the interval $[0,1]$ coupled with a continuous t-norm $\&$, $\{x_\lam\}$   a biCauchy net in a ${\sf D}(\CQ)$-category $\bbA$. Then $x\in \bbA_0$ is a Yoneda limit of $\{x_\lam\}$ if and only if $x$ is a bilimit of $\{x_\lam\}$. \end{prop}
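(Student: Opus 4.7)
The plan is to exploit two features of $\CQ=([0,1],\&,1)$: order convergence in $[0,1]$ coincides with standard topological convergence (Remark \ref{FC=convergent}(1)), so the $\bv_\lam\bw_{\sigma\geq\lam}$ operations on nets in $[0,1]$ admit $\varepsilon$-arguments; and $\&$ is uniformly continuous on $[0,1]^{2}$, with divisibility giving $(s\ra a)\&s=a$ whenever $a\leq s$ (Lemma \ref{GL cond}(4)).

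For $(\Rightarrow)$, assume $x$ is a Yoneda limit. Putting $y=x$ in the Yoneda formula immediately yields $\bbA(x,x)=\bv_\lam\bw_{\sigma\geq\lam}\bbA(x_\sigma,x)$, so only the equality $\bbA(x,x)=\bv_\lam\bw_{\sigma\geq\lam}\bbA(x,x_\sigma)$ among the four defining conditions of a bilimit remains. Set $t=\bbA(x,x)$; the upper bound $\bbA(x,x_\sigma)\leq tx=t$ is trivial. For the lower bound, apply the Yoneda formula at $y=x_\nu$ to obtain, for each $\lam_0$ and each $\nu\geq\lam_0$, $\bbA(x,x_\nu)\geq\bw_{\sigma\geq\lam_0}\bbA(x_\sigma,x_\nu)$. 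Taking $\bw_{\nu\geq\lam_0}$ gives $\bw_{\nu\geq\lam_0}\bbA(x,x_\nu)\geq\bw_{\sigma,\nu\geq\lam_0}\bbA(x_\sigma,x_\nu)$, and joining over $\lam_0$ and invoking the biCauchy equality (\ref{eq2}) produces $\bv_{\lam_0}\bw_{\nu\geq\lam_0}\bbA(x,x_\nu)\geq t$, as required.

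For $(\Leftarrow)$, assume $x$ is a bilimit; the type equality is part of the bilimit data, so only the formula $\bbA(x,y)=\bv_\lam\bw_{\sigma\geq\lam}\bbA(x_\sigma,y)$ remains. For $\leq$, transitivity gives $(tx\ra\bbA(x,y))\&\bbA(x_\sigma,x)\leq\bbA(x_\sigma,y)$; writing $d=tx\ra\bbA(x,y)$ and using monotonicity, $d\&\bw_{\sigma\geq\lam}\bbA(x_\sigma,x)\leq\bw_{\sigma\geq\lam}\bbA(x_\sigma,y)$, and joining over $\lam$ together with Scott continuity of $d\&-$ and the bilimit equality $\bv_\lam\bw_{\sigma\geq\lam}\bbA(x_\sigma,x)=tx$ reduces the left side to $d\&tx=\bbA(x,y)$ via divisibility.

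The reverse inequality is the main obstacle, because $\ra$ is discontinuous for the G\"odel and product t-norms, so the expression $(s\ra a)\&b$ must be handled as a single product via divisibility and uniform continuity of $\&$. Put $L=\bv_\lam\bw_{\sigma\geq\lam}\bbA(x_\sigma,y)$; if $t=0$, both sides vanish, so assume $t>0$. Transitivity gives $\bbA(x,y)\geq(s_\sigma\ra a_\sigma)\&b_\sigma$, where $s_\sigma=tx_\sigma$, $a_\sigma=\bbA(x_\sigma,y)$, $b_\sigma=\bbA(x,x_\sigma)$; since $a_\sigma\leq s_\sigma$, divisibility yields $(s_\sigma\ra a_\sigma)\&s_\sigma=a_\sigma$. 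Given $\varepsilon>0$, choose $\eta>0$ by uniform continuity of $\&$ such that $|u-u'|<\eta$ implies $|c\&u-c\&u'|<\varepsilon$ for all $c\in[0,1]$. By the bilimit and biCauchy hypotheses together with Remark \ref{FC=convergent}(1), $s_\sigma\to t$ and $b_\sigma\to t$, so some $\lam_0$ gives $|s_\sigma-b_\sigma|<\eta$ and $a_\sigma\geq L-\varepsilon$ for all $\sigma\geq\lam_0$; applying the uniform estimate with $c=s_\sigma\ra a_\sigma$, $u=s_\sigma$, $u'=b_\sigma$ yields $(s_\sigma\ra a_\sigma)\&b_\sigma\geq a_\sigma-\varepsilon\geq L-2\varepsilon$, whence $\bbA(x,y)\geq L-2\varepsilon$, and $\varepsilon\to0$ finishes the proof.
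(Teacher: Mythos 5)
Your proof is correct, and while its overall skeleton matches the paper's (necessity by a lattice computation feeding the Yoneda formula into the biCauchy identity; sufficiency by an $\varepsilon$-argument on the transitivity inequality, after disposing of the degenerate case $\bbA(x,x)=0$ and the easy inequality via composition with $\bbA(x_\sigma,x)$), the technical heart of the sufficiency direction is handled by a genuinely different and lighter device. The paper proves $\bbA(x,y)\geq\bv_\lam\bw_{\sigma\geq\lam}\bbA(x_\sigma,y)$ by invoking Lemma \ref{uniformly continuous}, i.e.\ the estimate $((a+\delta)\ra(a-\delta))\&(c-\delta)\geq c-\varepsilon$, whose proof in turn rests on the Mostert--Shields ordinal-sum decomposition (Theorem \ref{ord sum}) and a case split on whether $a=\bbA(x,x)$ is idempotent; it then perturbs the implication factor $(\bbA(x_\sigma,x_\sigma)\ra\bbA(x,x_\sigma))$ down to $((a+\delta)\ra(a-\delta))$. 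You instead keep the implication term $s_\sigma\ra a_\sigma$ untouched, use divisibility to pin down its product with $s_\sigma$ exactly ($(s_\sigma\ra a_\sigma)\&s_\sigma=a_\sigma$), and then perturb only the second factor from $s_\sigma$ to $b_\sigma=\bbA(x,x_\sigma)$, controlled by uniform continuity of $\&$ alone. This sidesteps the discontinuity of $\ra$ entirely and needs neither Lemma \ref{uniformly continuous} nor the structure theorem, so for this proposition your route is more self-contained; the paper's lemma, on the other hand, is reused later (in Theorems \ref{FC weight implies flat weight} and \ref{BC implies C}), which is presumably why the authors route the argument through it. One small point worth making explicit: your claim $b_\sigma\to t$ needs both $\liminf b_\sigma=t$ (the bilimit equality) and $\limsup b_\sigma\leq t$, the latter following from $b_\sigma=\bbA(x,x_\sigma)\leq tx=t$.
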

\begin{proof}
\textbf{Necessity}. By definition, we only need check that $\bbA(x,x)=\bigvee_\lam\bigwedge_{\sigma\geq\lam}\bbA(x,x_\sigma)$. Since $\bbA(x,x_\sigma)\leq\bbA(x,x)$ for all $x_\sigma$, the inequality $\bigvee_\lam\bigwedge_{\sigma\geq\lam}\bbA(x,x_\sigma) \leq\bbA(x,x)$ is trivial. Conversely,
\begin{align*}
\bv_\lam\bw_{\sigma\geq\lam}\bbA(x,x_\sigma)&=\bv_\lam\bw_{\sigma\geq\lam}\bv_\tau\bw_{\gamma\geq\tau}\bbA(x_\gamma,x_\sigma)& (x\ \text{is a Yoneda limit})\\
&\geq \bv_\lam\bw_{\sigma\geq\lam}\bw_{\gamma\geq\lam}\bbA(x_\gamma,x_\sigma)\\
&=\bv_\lam\bw_{\gamma,\sigma\geq\lam}\bbA(x_\gamma,x_\sigma)\\
&=\bigvee_\lam\bigwedge_{\si\geq\lam}\bbA(x_\si,x_\si)&(\{x_\lam\}\ \text{is biCauchy})\\
&=\bbA(x,x).\end{align*}

\textbf{Sufficiency}. It suffices to check that $\bbA(x,y)= \bigvee_\lam\bigwedge_{\sigma\geq\lam}\bbA(x_\sigma,y)$ for all $y\in\bbA_0$. We divide the proof into two cases.

\textbf{Case 1}.   $\bbA(x,x)=0$. Then for any $y$, $\bbA(x,y)=0$ and $$ \bigvee_\lam\bigwedge_{\sigma\geq\lam}\bbA(x_\sigma,y) \leq \bigvee_\lam\bigwedge_{\sigma\geq\lam}\bbA(x_\sigma,x_\sigma) =\bbA(x,x) , $$ hence $\bbA(x,y)= \bigvee_\lam\bigwedge_{\sigma\geq\lam}\bbA(x_\sigma,y)=0$.

\textbf{Case 2}. $\bbA(x,x)>0$. Let us be given an $\varepsilon>0$. By Lemma \ref{uniformly continuous}, there is some $\delta>0$ such that $$((\bbA(x,x)+\delta)\ra(\bbA(x,x)-\delta))\&(c-\delta)\geq c-\varepsilon $$  for all $c\leq \bbA(x,x)$.
Since
$$\bigvee_\lam\bigwedge_{\sigma\geq\lam}\bbA(x, x_\sigma)
=\bbA(x,x)=\bw_\lam\bv_{\si\geq\lam}\bbA(x_\si,x_\si),$$
 there exists $\si_0$ such that $$\bbA(x,x)-\delta<\bbA(x,x_\si)\leq\bbA(x_\si,x_\si) <\bbA(x,x)+\delta$$ whenever $\si\geq\si_0$.

For each $\lam$, let $\si$ be an upper bound of $\lam$ and $\si_0$, then
\begin{align*}
\bbA(x,y)&\geq (\bbA(x_\si,x_\si)\ra\bbA(x,x_\si))\&\bbA(x_\si,y)\\
&\geq ((\bbA(x,x)+\delta)\ra(\bbA(x,x)-\delta))\&\bbA(x_\si,y)\\
&\geq  ((\bbA(x,x)+\delta)\ra(\bbA(x,x)-\delta))\&(\bbA(x_\si,y)\wedge\bbA(x,x)-\delta)\\
&\geq \bbA(x_\si,y)\wedge\bbA(x,x)-\varepsilon,
\end{align*} it follows that $$\bbA(x,y)\geq\bbA(x,x)\wedge \bw_{\si\geq\lam}\bbA(x_\si,y)$$ by arbitrariness of $\varepsilon$. Therefore,
\begin{align*}\bbA(x,y)&\geq\bbA(x,x)\wedge \bv_\lam\bw_{\si\geq\lam}\bbA(x_\si,y)\\ &= \Big(\bv_\lam\bw_{\si\geq\lam}\bbA(x_\si,x_\si)\Big)\wedge \Big(\bv_\lam\bw_{\si\geq\lam}\bbA(x_\si,y)\Big)\\ &= \bv_\lam\bw_{\si\geq\lam}\bbA(x_\si,y). & (\bbA(x_\si,y)\leq\bbA(x_\si,x_\si))\end{align*}

The converse inequality is easy since $$\bbA(x,y)=\bbA(x,y)\circ\bbA(x,x)=\bbA(x,y)\circ\bv_\lam\bw_{\si\geq\lam}\bbA(x_\si,x)\leq \bv_\lam\bw_{\si\geq\lam}\bbA(x_\si,y).$$

 This completes the proof.
\end{proof}

In the rest of this section, we show that the Yoneda limit of a forward Cauchy net in a  ${\sf D}(\CQ)$-category $\bbA$ can be equivalently described as the supremum of a certain weight on $\bbA$ in the case $\CQ$ is the interval $[0,1]$ coupled with a continuous t-norm $\&$. This fact is of crucial importance in the study of the relationship between flat completeness and Yoneda completeness.

Let $\CQ$ be a divisible quantale, $\bbA$ be a ${\sf D}(\CQ)$-category, and $\{x_\lam\}$ be a net  in $\bbA$. It is easy to check that the correspondence $$x\mapsto \bigvee_\lam\bigwedge_{\mu\geq\lambda}\mathbb{A}(x,x_\mu)$$ defines a weight $\phi:\mathbb{A}\oto*_{t\phi}$ on $\bbA$ with $t\phi=\bv_\lam\bw_{\mu\geq\lam}\bbA(x_\mu,x_\mu)$, called the weight generated by   $\{x_\lam\}$. Dually, the correspondence $$x\mapsto \bigvee_\lam\bigwedge_{\mu\geq\lambda}\mathbb{A}(x_\mu,x)$$ defines a coweight $\psi:*_{t\psi}\oto\mathbb{A}$ on $\bbA$ with $t\psi=\bv_\lam\bw_{\mu\geq\lam}\bbA(x_\mu,x_\mu)$, called the coweight generated by   $\{x_\lam\}$.

The definition of Yoneda limit can then be rephrased as follows: $a\in\bbA_0$ is a Yoneda limit of  a   net $\{x_\lam\}$ in a ${\sf D}(\CQ)$-category $\bbA$   if   $\bbA(a,a)= \bigvee_\lam\bigwedge_{\sigma\geq\lam}\bbA(x_\sigma, x_\sigma)$  and $\psi(y)= \bbA(a,y)$  for all $y\in \bbA_0$, where   $\psi$ is the coweight on $\bbA$  generated by $\{x_\lam\}$. We shall see that $a$ is a Yoneda limit of  a forward Cauchy net  $\{x_\lam\}$ if and only if $a$ is a supremum of the weight $\phi$ generated by $\{x_\lam\}$.

\begin{lem}\label{psiphileqid} Let $\CQ$ be the interval $[0,1]$ coupled with a continuous t-norm $\&$. If $\{x_\lam\}$ is a forward Cauchy net in a ${\sf D}(\CQ)$-category $\bbA$, then $\psi\circ\phi\leq\bbA$, where $\phi$ and $\psi$ are the weight and the coweight on $\bbA$  generated by $\{x_\lam\}$, respectively. In particular, $$\psi(y)\leq \bbA(-,y)\swarrow\phi = \CP\mathbb{A}(\phi,\y(y))$$ for all $y\in\bbA_0$.  \end{lem}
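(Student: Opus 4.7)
The goal is to show $(\psi\circ\phi)(x,y)\leq\bbA(x,y)$ for all $x,y\in\bbA_0$; the ``in particular'' clause is then immediate from the adjoint definition of the left implication $\swarrow$ in ${\sf D}(\CQ)$-{\bf Dist} applied column-wise. Writing $t:=t\phi=t\psi$, the composition formula of Fact~1 together with divisibility gives $(\psi\circ\phi)(x,y)=\psi(y)\&(t\ra\phi(x))$, and by Lemma~\ref{GL cond}(3) this also equals $\phi(x)\&(t\ra\psi(y))$. Whenever $t$, $\phi(x)$, or $\psi(y)$ vanishes, both sides are $0$, so I may assume the three are strictly positive.

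The plan is to extract the desired bound from transitivity along the net $\{x_\lam\}$, using Lemma~\ref{uniformly continuous} to absorb the discontinuities of $\ra$. Fix $\varepsilon>0$ and choose, via Lemma~\ref{uniformly continuous} applied with $a=t$, a $\delta>0$ such that $((t+\delta)\ra(t-\delta))\&(c-\delta)\geq c-\varepsilon$ for every $c\in[0,t]$. By the forward Cauchy hypothesis, Equation~(\ref{eq1}), and the definitions of $\phi$ and $\psi$, one can pick an index $\lam_0$ so that for every $\mu\geq\lam_0$
\[ \bbA(x_\mu,x_\mu)\in(t-\delta,t+\delta),\quad \bbA(x,x_\mu)\geq\phi(x)-\delta,\quad \bbA(x_\mu,y)\geq\psi(y)-\delta. \]
Transitivity in $\bbA$ together with the monotonicity of $\ra$ (decreasing in the first variable, increasing in the second) and of $\&$ then gives $\bbA(x,y)\geq\bigl((t+\delta)\ra(\psi(y)-\delta)\bigr)\&(\phi(x)-\delta)$.

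Since $\phi(x)-\delta$, $\psi(y)-\delta$, and $t-\delta$ all sit below $t+\delta$, two applications of Lemma~\ref{GL cond}(3) combined with the divisibility identity $\psi(y)-\delta=(t-\delta)\&((t-\delta)\ra(\psi(y)-\delta))$ rewrite the previous right-hand side as
\[ \bigl((t+\delta)\ra(t-\delta)\bigr)\&(\phi(x)-\delta)\&\bigl((t-\delta)\ra(\psi(y)-\delta)\bigr). \]
The product of the first two factors is $\geq\phi(x)-\varepsilon$ by the choice of $\delta$, so
\[ \bbA(x,y)\geq(\phi(x)-\varepsilon)\&\bigl((t-\delta)\ra(\psi(y)-\delta)\bigr). \]
Letting $\varepsilon,\delta\to 0^{+}$ and using continuity of $\&$ on $[0,1]^{2}$ together with the convergence $(t-\delta)\ra(\psi(y)-\delta)\to t\ra\psi(y)$ would deliver $\bbA(x,y)\geq\phi(x)\&(t\ra\psi(y))=(\psi\circ\phi)(x,y)$, as required.

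The hard part is justifying that last convergence: $\ra$ is genuinely discontinuous in its first argument at idempotent heights, so this is not automatic. To handle it I will argue that any cluster value $L$ of $(t-\delta)\ra(\psi(y)-\delta)$ satisfies $t\&L=\psi(y)$ — this follows from the divisibility identity $(t-\delta)\&[(t-\delta)\ra(\psi(y)-\delta)]=\psi(y)-\delta$ by taking limits and invoking continuity of $\&$ — and then apply Theorem~\ref{ord sum} to localise around $t$: either $t$ is idempotent (in which case $t\&-$ is the truncation-by-$t$ map, so the equation $t\&L=\psi(y)$ together with the upper bound $L\leq t\ra\psi(y)$ forces equality), or $t$ sits in a non-trivial interval on which $\&$ is isomorphic to the {\L}ukasiewicz or product t-norm, and there $\ra$ is continuous on the relevant region so the convergence is direct. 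In every case $L=t\ra\psi(y)$, closing the argument.
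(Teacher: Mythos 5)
Your route — lower-bounding $\bbA(x,y)$ directly via transitivity through the tail of the net, rather than upper-bounding each term of the composite as the paper does — is sound all the way to the inequality $\bbA(x,y)\geq(\phi(x)-\varepsilon)\&\bigl((t-\delta)\ra(\psi(y)-\delta)\bigr)$, and it is genuinely different from the paper's proof (which splits on whether the type $a=t\phi$ is idempotent and estimates $(\bbA(x_\mu,y)\wedge a)\circ(\bbA(x,x_\mu)\wedge a)$ from above). But the last step fails as stated: the convergence $(t-\delta)\ra(\psi(y)-\delta)\to t\ra\psi(y)$ is false in general, and your dichotomy does not rescue it. Take the ordinal sum with idempotent set $\{0,1/2,1\}$ whose restriction to $[1/2,1]$ is isomorphic to the {\L}ukasiewicz t-norm, so that $x\&y=\max\{x+y-1,1/2\}$ for $x,y\in[1/2,1]$ and $x\&z=z$ for $z\leq 1/2\leq x$. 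With $t=3/4$ and $\psi(y)=1/2$ one computes $t\ra\psi(y)=3/4$, whereas $(t-\delta)\ra(\psi(y)-\delta)=1/2-\delta\to 1/2$. The flaw in your case analysis is that when $t$ lies in a {\L}ukasiewicz-type component $[t_-,t^+]$, the second argument $\psi(y)-\delta$ need not stay in that component: for $\psi(y)\leq t_-$ it sits below it, and $x\ra y$ is discontinuous in $y$ exactly where $y$ crosses $t_-$ from below. (The idempotent branch also has a soft spot at $\psi(y)=t$, where $t\&L=\psi(y)$ and $L\leq t\ra\psi(y)=1$ do not force $L=1$, although there the conclusion happens to hold since $(t-\delta)\ra(t-\delta)=1$.)

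The good news is that the relation you did establish correctly, namely $t\&L=\psi(y)$ for every cluster value $L$, already closes the argument; you should use it instead of trying to prove $L=t\ra\psi(y)$. Since $\phi(x)\leq t$, divisibility gives $\phi(x)=t\&(t\ra\phi(x))$, so by associativity and commutativity
$$\phi(x)\&L=\bigl(t\ra\phi(x)\bigr)\&(t\&L)=\bigl(t\ra\phi(x)\bigr)\&\psi(y)=(\psi\circ\phi)(x,y).$$
Hence, passing to the limit in $\bbA(x,y)\geq(\phi(x)-\varepsilon)\&\bigl((t-\delta)\ra(\psi(y)-\delta)\bigr)$ along a subnet on which the residuum converges (using compactness of $[0,1]$ and continuity of $\&$) yields $\bbA(x,y)\geq\phi(x)\&L=(\psi\circ\phi)(x,y)$, which is exactly the claim. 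With this one-line replacement your proof is complete and independent of the paper's.
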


\begin{proof} Let $a=t\phi=t\psi= \bv_\lam\bw_{\mu\geq\lam}\bbA(x_\mu,x_\mu)$. We must show that $$\psi(y)\circ\phi(x)=(a\ra\psi(y))\&\phi(x)= \psi(y)\&(a\ra\phi(x))
\leq\bbA(x,y)$$ for all $x,y\in\bbA_0$.

Since  $\phi(x),\psi(y)\leq a$,
\begin{align*}
\psi(y)\circ\phi(x)
&=\bv_\lam\bw_{\mu\geq\lambda}(\mathbb{A}(x_\mu,y)\wedge a)\circ\bv_\lam\bw_{\mu\geq\lambda}(\mathbb{A}(x,x_\mu)\wedge a)\\
&\leq\bv_\lam\bw_{\mu\geq\lambda}((\mathbb{A}(x_\mu,y)\wedge a)\circ(\mathbb{A}(x,x_\mu)\wedge a)),
\end{align*} where $(\mathbb{A}(x_\mu,y)\wedge a)\circ(\mathbb{A}(x,x_\mu)\wedge a)=(a\ra(\mathbb{A}(x_\mu,y)\wedge a))\&(\mathbb{A}(x,x_\mu)\wedge a)$. So, it suffices to show that for any $\varepsilon>0$, there is some $\sigma$ such that
$$(\mathbb{A}(x_\mu,y)\wedge a)\circ(\mathbb{A}(x,x_\mu)\wedge a)\leq \mathbb{A}(x,y)+\varepsilon$$ for all $\mu\geq\sigma$. We continue the proof by distinguishing two cases.

\textbf{Case 1}. $a$ is idempotent. In this case, we show that   for any $\mu$, $$((\mathbb{A}(x_\mu,y)\wedge a)\circ(\mathbb{A}(x,x_\mu)\wedge a))\leq\bbA(x,y).$$

If $\bbA(x_\mu,x_\mu)\leq a$, then $\mathbb{A}(x_\mu,y), \mathbb{A}(x,x_\mu)\leq a$, hence
\begin{align*} ((\mathbb{A}(x_\mu,y)\wedge a)\circ(\mathbb{A}(x,x_\mu)\wedge a))&=
(a\ra \mathbb{A}(x_\mu,y))\&\mathbb{A}(x,x_\mu)\\
&\leq (\bbA(x_\mu,x_\mu)\ra \mathbb{A}(x_\mu,y))\&\mathbb{A}(x,x_\mu)\\
&\leq\bbA(x,y).
\end{align*}

If $\bbA(x_\mu,x_\mu)> a$,  we proceed with two subcases.

(i) Either $\mathbb{A}(x_\mu,y)<a$ or $\mathbb{A}(x,x_\mu)<a$. Without loss of generality, suppose  $\mathbb{A}(x_\mu,y)<a$. Then
 \begin{align*}
 ((\mathbb{A}(x_\mu,y)\wedge a)\circ(\mathbb{A}(x,x_\mu)\wedge a))&=
(a\ra \mathbb{A}(x_\mu,y))\&(\mathbb{A}(x,x_\mu)\wedge a)\\
 &\leq(a\ra\mathbb{A}(x_\mu,y))\&\mathbb{A}(x,x_\mu)\\
 &=\mathbb{A}(x_\mu,y)\&\mathbb{A}(x,x_\mu)\\
 &=(\bbA(x_\mu,x_\mu)\ra\mathbb{A}(x_\mu,y))\&\mathbb{A}(x,x_\mu)\\
 &\leq\bbA(x,y),
\end{align*} where, the last equality follows from Equation (\ref{implication around an idempotent}) and  that $\mathbb{A}(x_\mu,y)<a< \bbA(x_\mu,x_\mu)$.

(ii) $\mathbb{A}(x_\mu,y),\mathbb{A}(x,x_\mu)\geq a$. Since $\bbA(x_\mu,x_\mu)\ra\mathbb{A}(x_\mu,y)\geq a$, then   $$(\mathbb{A}(x_\mu,y)\wedge a)\circ(\mathbb{A}(x,x_\mu)\wedge a))= a\leq(\bbA(x_\mu,x_\mu)\ra\mathbb{A}(x_\mu,y))\&\mathbb{A}(x, x_\mu)\leq\bbA(x,y).$$

\textbf{Case 2}.   $a$ is non-idempotent. Let $a_-$ denote  the biggest idempotent element of $\&$  that is smaller than $a$  and $a^+$   the least idempotent element of $\&$  that is bigger than $a$. For any $\varepsilon> 0$ with $[a-\varepsilon,a+\varepsilon]\subset(a_-,a^+)$,
by virtue of Lemma \ref{uniformly continuous}, the   function $$[a-\varepsilon ,a+\varepsilon ]\times[a_-,a^+]\times[0,a^+]\lra [0,1], \quad (d,b,c)\mapsto(d\ra b)\&c$$ is uniformly continuous,  there exists some $\delta\in(0,\varepsilon)$ such that \begin{equation}\label{eq5.3}|((a\ra b)\&c)- ((d\ra b)\&c)|<\varepsilon \end{equation} for all $d\in (a-\delta,a+\delta)$, $b\in[a_-,a^+]$,  and $c\in[0,a^+]$.
 Since $\lim_{\mu}\bbA(x_\mu,x_\mu)=a$,  there is some $\sigma $ such that $\bbA(x_\mu,x_\mu)\in(a-\delta,a+\delta)$ for all $\mu\geq\sigma $.

We claim that $\sigma$ satisfies the requirement, i.e., $$(\mathbb{A}(x_\mu,y)\wedge a)\circ(\mathbb{A}(x,x_\mu)\wedge a)\leq \mathbb{A}(x,y)+\varepsilon$$ for all $\mu\geq\sigma$. The proof is divided into two subcases.

(i)   $\mathbb{A}(x,x_\mu),\mathbb{A}(x_\mu,y)<a_-$.    Then
 \begin{align*}
(\mathbb{A}(x_\mu,y)\wedge a)\circ(\mathbb{A}(x,x_\mu)\wedge a)
&=  (a\ra\mathbb{A}(x_\mu,y))\& \mathbb{A}(x,x_\mu)\\
&=  \mathbb{A}(x_\mu,y)\&\mathbb{A}(x,x_\mu) & (\text{Equation}~ (\ref{implication around an idempotent}))\\
&= (\mathbb{A}(x_\mu,x_\mu)\ra\mathbb{A}(x_\mu,y)) \&\mathbb{A}(x,x_\mu) \\
&\leq  \mathbb{A}(x,y).
\end{align*}

(ii) Either $\mathbb{A}(x,x_\mu)\geq a_-$ or $\mathbb{A}(x_\mu,y)\geq a_-$. Without loss of generality, we suppose that $\mathbb{A}(x_\mu,y)\geq a_-$. Since $\bbA(x,x_\mu)\leq\bbA(x_\mu,x_\mu)\leq a^+$, we have
\begin{align*} ((\mathbb{A}(x_\mu,y)\wedge a)\circ(\mathbb{A}(x,x_\mu)\wedge a))
&= (a\ra \mathbb{A}(x_\mu,y)) \&(\mathbb{A}(x,x_\mu)\wedge a)\\
&\leq  (a\ra\mathbb{A}(x_\mu,y))\&\mathbb{A}(x,x_\mu)\\ &\leq \big((\bbA(x_\mu,x_\mu)\ra\mathbb{A}(x_\mu, y))\&\mathbb{A}(x,x_\mu)\big)
+\varepsilon\\
&\leq  \bbA(x,y)+\varepsilon,
\end{align*} where, the second $\leq$ holds because   of the inequality (\ref{eq5.3}).

The proof is   completed.
 \end{proof}

\begin{thm}\label{sup=Yoneda} Let $\CQ$ be the interval $[0,1]$ coupled with a continuous t-norm $\&$.  If a weight $\phi$  on a ${\sf D}(\CQ)$-category $\bbA$ is  generated by a forward Cauchy net $\{x_\lam\}$ in $\bbA$, then an element $a\in\bbA_0$ is a supremum of $\phi$ if and only if it is a Yoneda limit of $\{x_\lam\}$.\end{thm}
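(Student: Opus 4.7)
My plan is to reduce the theorem to the single pointwise identity
\[
\CP\bbA(\phi,\y(y)) = \psi(y) \quad \text{for all } y\in\bbA_0,
\]
where $\psi$ is the coweight generated by $\{x_\lam\}$. Granting this, the equivalence is immediate: recalling that $ta=\bbA(a,a)$ in every ${\sf D}(\CQ)$-category and that $t\phi=\bv_\lam\bw_{\mu\geq\lam}\bbA(x_\mu,x_\mu)$ by construction, the type condition $ta=t\phi$ in the definition of supremum coincides with the type condition $\bbA(a,a)=\bv_\lam\bw_{\si\geq\lam}\bbA(x_\si,x_\si)$ in the definition of Yoneda limit, while the pointwise conditions $\bbA(a,y)=\CP\bbA(\phi,\y(y))$ and $\bbA(a,y)=\psi(y)$ coincide by the identity.

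One half of the identity, $\psi(y)\leq \CP\bbA(\phi,\y(y))$, is exactly Lemma \ref{psiphileqid}. The nontrivial half rests on a ``stabilisation'' of $\phi$ along the net. I would first check that $\phi(x_\rho)\to t\phi=a^*$ as $\rho$ progresses through the directed index set: on the one hand $\phi(x_\rho)\leq t\phi$ because $\phi(x_\rho)\in{\sf D}(\CQ)(tx_\rho,t\phi)$; on the other hand the forward Cauchy hypothesis gives, for every $\varepsilon>0$, an index $\lam_0$ such that $\bbA(x_\mu,x_\nu)\geq a^*-\varepsilon$ whenever $\nu\geq\mu\geq\lam_0$, so $\phi(x_\rho)\geq\bw_{\nu\geq\rho}\bbA(x_\rho,x_\nu)\geq a^*-\varepsilon$ for all $\rho\geq\lam_0$.

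Using the formula $\CP\bbA(\phi,\y(y))=ty\wedge\bw_x(\phi(x)\ra\bbA(x,y))\& t\phi$ from Fact 1 of Section \ref{section 4}, I would specialise to $x=x_\rho$ to get $\CP\bbA(\phi,\y(y))\leq(\phi(x_\rho)\ra\bbA(x_\rho,y))\& t\phi$. The divisibility identity in Lemma \ref{GL cond}(4) rewrites $(\phi(x_\rho)\ra\bbA(x_\rho,y))\&\phi(x_\rho)$ as $\phi(x_\rho)\wedge\bbA(x_\rho,y)\leq\bbA(x_\rho,y)$. Since $\&$ is uniformly continuous on $[0,1]^2$, enlarging $\lam_0$ if necessary makes $|t\phi-\phi(x_\rho)|$ small enough that
\[
(\phi(x_\rho)\ra\bbA(x_\rho,y))\& t\phi \leq (\phi(x_\rho)\ra\bbA(x_\rho,y))\&\phi(x_\rho)+\varepsilon \leq \bbA(x_\rho,y)+\varepsilon
\]
for all $\rho\geq\lam_0$. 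Taking the meet over such $\rho$ gives $\CP\bbA(\phi,\y(y))\leq\psi(y)+\varepsilon$, and letting $\varepsilon\to 0$ closes the identity.

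The main obstacle is precisely this last estimate: the implication $\ra$ is in general \emph{not} jointly continuous on $[0,1]^2$ (the product and G\"odel t-norms give an implication that jumps along the diagonal), so one cannot simply pass to a limit inside $\phi(x_\rho)\ra\bbA(x_\rho,y)$ as $\phi(x_\rho)\nearrow t\phi$. The trick is that the discontinuous quantity always appears multiplied on the right by $t\phi$, and by trading $t\phi$ for $\phi(x_\rho)$ using uniform continuity of $\&$ one can bypass $\ra$ entirely, with divisibility collapsing the resulting product to the harmless $\phi(x_\rho)\wedge\bbA(x_\rho,y)$.
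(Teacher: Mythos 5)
Your proof is correct, and it reorganizes the argument in a way that differs genuinely from the paper's. The paper proves the two implications separately: in each direction it assumes the element $a$ in hand (as Yoneda limit, resp.\ as supremum), bounds $\CP\bbA(\phi,\y(y))$ by $t\phi\,\&\,(\phi(x_{\si})\ra\bbA(x_{\si},y))$, and then controls the discontinuity of $\ra$ via the dedicated Lemma \ref{t-norm} (whose $\delta$ depends on the fixed pair $a=\bbA(a,a)$, $b=\bbA(a,y)+\varepsilon/2$, resp.\ $\psi(y)+\varepsilon/2$); Lemma \ref{psiphileqid} supplies the reverse inequality in both directions, exactly as in your proposal. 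You instead establish the single element-free identity $\CP\bbA(\phi,\y(y))=\psi(y)$ and observe that both definitions then literally coincide. Your substitute for Lemma \ref{t-norm} --- trading $t\phi$ for $\phi(x_\rho)$ by uniform continuity of $\&$ (which is uniform in the first factor, so it tolerates the varying $\phi(x_\rho)\ra\bbA(x_\rho,y)$) and then collapsing $(\phi(x_\rho)\ra\bbA(x_\rho,y))\&\phi(x_\rho)$ to $\phi(x_\rho)\wedge\bbA(x_\rho,y)$ by divisibility (Lemma \ref{GL cond}(4)) --- is sound, and your diagnosis of why the naive limit argument fails (non-continuity of $\ra$ on the diagonal for the product and G\"{o}del t-norms) is exactly the obstruction the paper's Lemma \ref{t-norm} is designed to circumvent. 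What your route buys is a cleaner and slightly stronger statement (the weight $\CP\bbA(-,\y(y))\circ\phi$-type computation is done once, unconditionally, rather than twice under hypotheses), at no extra cost; the estimates underneath are of the same nature as the paper's.
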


\begin{proof}
\textbf{Sufficiency}. We show that if $a$ is a Yoneda limit of $\{x_\lam\}$ then  $a$ is a supremum of $\phi$. That is, $\bbA(a,y)=\CP\mathbb{A}(\phi,\y(y))$ for all $y\in \bbA_0$.

Let $\psi=\bigvee_\lam\bigwedge_{\mu\geq\lambda}\mathbb{A}(x_\mu,-)$ be the coweight on $\bbA$  generated by $\{x_\lam\}$. By definition of Yoneda limit, it holds that $$\bbA(a,a)= \bv_\lam\bw_{\mu\geq\lambda}\mathbb{A}(x_\mu,x_\mu) =t\phi$$ and that $\bbA(a,y)=\psi(y)$  for all $y\in \bbA_0$. It follows from Lemma \ref{psiphileqid}  that $\bbA(a,y)\leq\CP\mathbb{A}(\phi,\y(y))$ for all $y\in \bbA_0$, so,   it remains to check  that $\CP\mathbb{A}(\phi,\y(y))\leq\bbA(a,y)$ for all $y\in \bbA_0$.

The inequality $\CP\mathbb{A}(\phi,\y(y))\leq\bbA(a,y)$ is trivial if $\bbA(a,y)=\bbA(a,a)$, since $\CP\mathbb{A}(\phi,\y(y))\leq t\phi=\bbA(a,a)$. Now suppose  that $\bbA(a,a)>\bbA(a,y)$. For any $0<\varepsilon<\bbA(a,a)-\bbA(a,y)$, let $\varepsilon_0=\frac{\varepsilon}{2}$. By Lemma \ref{t-norm}, there exists some $0<\delta \leq\frac{\varepsilon}{2}$
such that $$\bbA(a,a)\&\Big((\bbA(a,a)-\delta )\ra(\bbA(a,y)+ \frac{\varepsilon}{2})\Big)<\bbA(a,y) +\frac{\varepsilon}{2}+ \varepsilon_0=\bbA(a,y)+\varepsilon.$$

 By virtue of Equation (\ref{eq1}), we have $$\bbA(a,a)= \bigvee_\lam\bigwedge_{\nu\geq\mu\geq\lam}\bbA(x_\mu,x_\nu).$$ Thus, there is some $\lam_0$ such that $\bbA(x_\mu,x_\nu)>\bbA(a,a)-\delta $ whenever $\nu\geq\mu\geq\lam_0$. In particular, $$\phi(x_{\si}) =\bv_\lam\bw_{\mu\geq\lambda}\mathbb{A}(x_{\si},x_\mu)\geq \bbA(a,a)-\delta $$ for all $\sigma\geq\lam_0$.  Since $$\bbA(a,y)= \bv_\lam\bw_{\mu\geq\lambda}\mathbb{A}(x_\mu,y),$$   there must be  some $\sigma_0\geq\lam_0$ such that $\bbA(x_{\sigma_0},y)<\bbA(a,y)+\delta $.
Therefore, \begin{align*}\CP\mathbb{A}(\phi,\y(y))&= \bbA(y,y)\wedge\bigwedge_{x\in\bbA_0}(\phi(x)\ra\bbA(x,y)) \&t\phi & (\textrm{Equation}~ (\ref{subsethood}))\\ &\leq\bigwedge_{x\in\bbA_0}(\phi(x)\ra\bbA(x,y))\&\bbA(a,a) & (t\phi=\bbA(a,a))\\
&\leq \bbA(a,a)\&(\phi(x_{\si_0})\ra\bbA(x_{\sigma_0},y))\\
&\leq \bbA(a,a)\& ((\bbA(a,a)-\delta )\ra(\bbA(a,y)+\delta ))\\
&\leq \bbA(a,a)\&\Big((\bbA(a,a)-\delta )\ra(\bbA(a,y)+\frac{\varepsilon}{2})\Big)\\
&\leq \bbA(a,y)+\varepsilon.
\end{align*}
Consequently, $\CP\mathbb{A}(\phi,\y(y))\leq\bbA(a,y)$ by arbitrariness of $\varepsilon$.

\textbf{Necessity}. Suppose $a\in\bbA_0$ is a supremum of $\phi$. We show that $a$ is a Yoneda limit of $\{x_\lam\}$. Since $a$ is a supremum of $\phi$, one has that $$\bbA(a,a)=t\phi= \bv_\lam\bw_{\mu\geq\lambda}\mathbb{A}(x_\mu,x_\mu).$$ Thus, it suffices to  show that $\bbA(a,y)= \psi(y)= \bv_\lam\bw_{\mu\geq\lambda}\mathbb{A}(x_\mu,y)$ for all $y\in\bbA_0$.

Since $\CP\mathbb{A}(\phi,\y(y))=\bbA(a,y)$, the inequality $\bbA(a,y)\geq\psi(y)$ follows immediately from Lemma \ref{psiphileqid}. It remains to check that  $\bbA(a,y)\leq\psi(y)$.
The inequality is trivial if $\psi(y)=\bbA(a,a)$.
Suppose that $\psi(y)<\bbA(a,a)$. Given $0<\varepsilon<\bbA(a,a)-\psi(y)$, let $\varepsilon_0=\frac{\varepsilon}{2}$. By Lemma \ref{t-norm}, there is some $0<\delta \leq\frac{\varepsilon}{2}$
such that $$\bbA(a,a)\&\Big((\bbA(a,a)-\delta )\ra(\psi(y)+\frac{\varepsilon}{2})\Big)<\psi(y)+ \frac{\varepsilon}{2}+\varepsilon_0=\psi(y)+\varepsilon.$$
Since $$\bbA(a,a)=t\phi= \bigvee_\lam\bigwedge_{\nu\geq\mu\geq\lam}\bbA(x_\mu, x_\nu),$$ there is some $\lam_0$ such that $\bbA(x_\mu,x_\nu)>\bbA(a,a)-\delta $ whenever $\nu\geq\mu\geq\lam_0$. In particular, $$\phi(x_{\si}) =\bv_\lam\bw_{\mu\geq\lambda}\mathbb{A}(x_{\si},x_\mu)\geq \bbA(a,a)-\delta $$ for all $\sigma\geq\lam_0$. Since $\psi(y)=\bv_\lam\bw_{\mu\geq\lambda}\mathbb{A}(x_\mu,y)$,  there exists some $\si_0\geq\lam_0$ such that $\bbA(x_{\si_0},y)<\psi(y)+\frac{\varepsilon}{2}$. Therefore,
\begin{align*}
\bbA(a,y)&=\CP\mathbb{A}(\phi,\y(y))\\ &\leq \bigwedge_{x\in\bbA_0}(\phi(x)\ra \bbA(x,y))\&\bbA(a,a) & (\textrm{Equation}~ (\ref{subsethood})) \\
&\leq \bbA(a,a)\&(\phi(x_{\si_0})\ra \bbA(x_{\si_0},y))\\
&\leq \bbA(a,a)\&\Big((\mathbb{A}(a,a)-\delta )\ra(\psi(y) +\frac{\varepsilon}{2})\Big)\\
&\leq \psi(y)+\varepsilon,
\end{align*}
so, $\bbA(a,y)\leq \psi(y)$ by arbitrariness of $\varepsilon$.
  \end{proof}

\begin{cor}\label{sup=limit}
Let $\CQ$ be the interval $[0,1]$ coupled with a continuous t-norm $\&$.  If $\{x_\lam\}$ is a biCauchy net  in $\bbA$ and $\phi$ is  the weight generated by $\{x_\lam\}$, then an element $a\in\bbA_0$ is a supremum of $\phi$ if and only if it is a bilimit of $\{x_\lam\}$. \end{cor}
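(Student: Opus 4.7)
The corollary should follow immediately by chaining two results already in hand. The plan is to observe that a biCauchy net is, in particular, forward Cauchy (this was noted explicitly in Section~5 via the chain of inequalities showing biCauchy $\Rightarrow$ forward Cauchy $\Rightarrow$ has a type). So Theorem~\ref{sup=Yoneda} applies and gives that an element $a\in\bbA_0$ is a supremum of the weight $\phi$ generated by $\{x_\lam\}$ if and only if $a$ is a Yoneda limit of $\{x_\lam\}$.

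Next I would invoke Proposition~\ref{LC implies bi-CC}, whose hypothesis is exactly that $\CQ=([0,1],\&,1)$ with $\&$ continuous and that $\{x_\lam\}$ is biCauchy. This yields the equivalence ``Yoneda limit of $\{x_\lam\}$'' $\Longleftrightarrow$ ``bilimit of $\{x_\lam\}$'' for any candidate $a\in\bbA_0$. Transitivity of the two biconditionals gives the desired equivalence ``supremum of $\phi$'' $\Longleftrightarrow$ ``bilimit of $\{x_\lam\}$''.

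There is no real obstacle: all the substantial analytic work (the $\delta$-$\varepsilon$ arguments involving Lemmas on uniform continuity of $\&$ and $\ra$ near non-idempotent elements) was already absorbed in the proofs of Lemma~\ref{psiphileqid}, Theorem~\ref{sup=Yoneda}, and Proposition~\ref{LC implies bi-CC}. The only micro-verification to record is that the weight generated by the biCauchy net is the same object appearing in Theorem~\ref{sup=Yoneda} (it is, by definition), and that the ``type'' $t\phi=\bv_\lam\bw_{\mu\geq\lam}\bbA(x_\mu,x_\mu)$ coincides with the common value guaranteed by Equation~(\ref{eq2}) for biCauchy nets, so the equality $\bbA(a,a)=t\phi$ required on either side matches.

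Thus the whole proof reduces to one line: combine Theorem~\ref{sup=Yoneda} and Proposition~\ref{LC implies bi-CC}, using that every biCauchy net is forward Cauchy. I would present it as a two-sentence proof rather than reproving any estimates.
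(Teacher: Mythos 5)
Your proposal is correct and is exactly the paper's own argument: the paper's proof also just notes that biCauchy nets are forward Cauchy and then combines Proposition~\ref{LC implies bi-CC} with Theorem~\ref{sup=Yoneda}. Nothing further is needed.
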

\begin{proof}
Since biCauchy nets are always forward Cauchy,  the conclusion follows immediately from Proposition \ref{LC implies bi-CC} and Theorem \ref{sup=Yoneda}.\end{proof}

\section{Flat completeness implies Yoneda completeness}
A weight $\phi:\bbA\oto *_{t\phi}$  on a ${\sf D}(\CQ)$-category $\bbA$ is  \emph{forward Cauchy} (\emph{biCauchy}, resp.) if it can be generated by a forward Cauchy  (biCauchy, resp.) net in $\bbA$, i.e., there is a forward Cauchy  (biCauchy, resp.) net $\{x_\lam\}$ in $\bbA$ such that $\phi=\bv_\lam\bw_{\mu\geq\lambda}\mathbb{A}(-,x_\mu)$ and $t\phi=\bv_\lam\bw_{\mu\geq\lam}\bbA(x_\mu,x_\mu)$.

\begin{thm}\label{FC weight implies flat weight} If $\CQ$ is the interval $[0,1]$ coupled with a continuous t-norm $\&$, then each flat complete ${\sf D}(\CQ)$-category is Yoneda complete. \end{thm}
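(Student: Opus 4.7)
The plan is to reduce the statement to the key lemma that every forward Cauchy weight on a ${\sf D}(\CQ)$-category is flat. Once that lemma is in hand, the theorem is immediate: given a flat complete $\bbA$ and a forward Cauchy net $\{x_\lam\}_{\lam\in\Lambda}$ in $\bbA$, the generated weight $\phi:\bbA\oto *_{t\phi}$ is a forward Cauchy weight by definition, hence flat by the lemma; flat completeness of $\bbA$ then supplies a supremum $a\in\bbA_0$ of $\phi$, and Theorem \ref{sup=Yoneda} identifies $a$ as a Yoneda limit of $\{x_\lam\}$.

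To prove that a forward Cauchy weight $\phi$ (of type $a=t\phi$) is flat, I would invoke the criterion of Fact 2 in Section \ref{section 4}, which reduces flatness to two conditions: (i) $\phi\circ(b\wedge t)=b\wedge a$ for all $b\in[0,1]$, and (ii) $\phi\circ(\psi_1\wedge\psi_2)=(\phi\circ\psi_1)\wedge(\phi\circ\psi_2)$ for any coweights $\psi_1,\psi_2$ of the same type. In both, the $\leq$-direction is automatic from the monotonicity of composition in ${\sf D}(\CQ)$-{\bf Dist}, so all the work lies in the reverse inequalities.

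For (i), given $\varepsilon>0$, the forward Cauchy property together with Equation (\ref{eq1}) yields a tail index $\lam_0$ such that $\phi(x_\mu)\geq a-\varepsilon$ and $\bbA(x_\mu,x_\mu)\in(a-\varepsilon,a+\varepsilon)$ whenever $\mu\geq\lam_0$. The single summand $\phi(x_\mu)\&(\bbA(x_\mu,x_\mu)\ra(b\wedge\bbA(x_\mu,x_\mu)))$ of $\phi\circ(b\wedge t)$ then already approximates $b\wedge a$ from below, using the uniform continuity estimates of Lemma \ref{uniformly continuous} and a case split on whether $a$ is idempotent, in precisely the style of the proof of Lemma \ref{psiphileqid}.

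Part (ii) is the main obstacle. The reverse inequality requires a \emph{single} element $z\in\bbA_0$ at which $\phi(z)\&(tz\ra\psi_i(z))$ simultaneously approximates $\phi\circ\psi_i$ for both $i=1,2$, and the forward Cauchy structure of $\phi$ is exactly what produces such a common witness. Given near-optimal $y_i\in\bbA_0$ with $\phi(y_i)\&(ty_i\ra\psi_i(y_i))$ close to $\phi\circ\psi_i$, and any sufficiently large tail index $\mu$, the transitivity inequalities for $\phi$ and for the coweights $\psi_i$ allow one to channel both approximations through the single point $z=x_\mu$; since $\phi(x_\mu)$ is close to $a$, the substitution loses only $O(\varepsilon)$, after the same t-norm analysis (Lemmas \ref{t-norm} and \ref{uniformly continuous}) and idempotency case split as in Lemma \ref{psiphileqid}. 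The subtlest point I expect is coordinating a single index $\mu$ against the pair $(y_1,y_2)$ so that one uniform estimate governs both meet operands at once; once this balancing act is carried through, flatness of $\phi$ is established and the theorem follows.
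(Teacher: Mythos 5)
Your proposal is correct and follows essentially the same route as the paper: reduce via Theorem \ref{sup=Yoneda} to the key lemma that every forward Cauchy weight is flat, then verify the two conditions of Fact 2 using tail elements of the generating net as the witnesses, with Lemma \ref{uniformly continuous} controlling the error. The only (inessential) difference is in the binary-meet condition: rather than channelling near-optimal points $y_1,y_2$ through a tail element $x_\mu$ and coordinating indices, the paper bounds $(\phi\circ\psi_1)\wedge(\phi\circ\psi_2)$ from above by $\bigvee_\lam\bigwedge_{\mu\geq\lam}(\psi_1(x_\mu)\wedge\psi_2(x_\mu))$ directly from the expansion $\phi=\bigvee_\lam\bigwedge_{\mu\geq\lam}\bbA(-,x_\mu)$, and then only needs the lower estimate at a single tail index.
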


\begin{proof} By Theorem \ref{sup=Yoneda}, it suffices to show that each forward Cauchy weight on any  ${\sf D}(\CQ)$-category $\bbA$ is flat.

Suppose that $\phi:\bbA\oto*_{t\phi}$ is generated by a forward Cauchy net $\{x_\lam\}$. Then $t\phi=\lim_{\nu\geq\mu}\bbA(x_\mu,x_\nu)$ and $\phi=\bv_\lam\bw_{\mu\geq\lambda}\mathbb{A}(-,x_\mu)$.

If $t\phi=0$, then $\phi$ is the trivial  weight on $\bbA$, hence flat. It remains to check the conclusion in the case that  $t\phi>0$. That is, for any $b\in[0,1]$, $$\phi\circ-:\CQ\textrm{-}{\bf Dist}(*_b,\bbA)\lra [0,t\phi\wedge b]$$ preserves finite meets.

We'll prove the conclusion in two steps.  First of all, for any $\varepsilon>0$,  by virtue of Lemma \ref{uniformly continuous},   there exists some $\delta>0$ such that   $$((t\phi+\delta)\ra(t\phi-\delta))\&(c-\delta)\geq c-\varepsilon$$ for any $c\leq t\phi$.
Since $\{x_\lam\}$ is forward Cauchy, it follows that there is some $\si_0$ such that $|\bbA(x_\mu,x_\nu)-t\phi|<\delta$ whenever $\si_0\leq\mu\leq\nu$, hence $$\phi(x_{\si}) =\bv_\lam\bw_{\mu\geq\lambda}\mathbb{A}(x_{\si},x_\mu)\geq t\phi-\delta$$ for all $\sigma\geq\si_0$.

\textbf{Step 1}. $\phi\circ-$ preserves the top element. That is,  $\phi\circ(b\wedge t)=t\phi\wedge b$, where $t(x)=\bbA(x,x)$ for all $x\in\bbA_0$.  It suffices to show the  $\phi\circ(b\wedge t)\geq t\phi\wedge b$. In fact,
\begin{align*}
\phi\circ(b\wedge t)&=\bv_x(\bbA(x,x)\ra\phi(x))\&(b\wedge\bbA(x,x))\\
&\geq (\bbA(x_{\si_0},x_{\si_0})\ra\phi(x_{\si_0}))\&(b\wedge\bbA(x_{\si_0},x_{\si_0}))\\
&\geq ((t\phi+\delta)\ra(t\phi-\delta))\&(b\wedge (t\phi-\delta))\\
&\geq ((t\phi+\delta)\ra(t\phi-\delta))\&((t\phi\wedge b)-\delta)\\
&\geq (t\phi\wedge b)-\varepsilon,
\end{align*}so, the inequality $\phi\circ(b\wedge t)\geq t\phi\wedge b$ follows by arbitrariness of $\varepsilon$.

\textbf{Step 2}. $\phi\circ-$ preserves binary meets. That is, $\phi\circ(\psi_1\wedge\psi_2)= (\phi\circ\psi_1)\wedge(\phi\circ\psi_2)$ for any  $\psi_1,\psi_2\in \CQ\textrm{-}{\bf Dist}(*_b,\bbA)$.

Since $\phi=\bv_\lam\bw_{\mu\geq\lambda}\mathbb{A}(-,x_\mu)$, one has  \begin{align*}
(\phi\circ\psi_1)\wedge(\phi\circ\psi_2)
&\leq\bv_\lam\bw_{\mu\geq\lambda}(\mathbb{A}(-,x_\mu)\circ\psi_1)\wedge\bv_\lam\bw_{\mu\geq\lambda}(\mathbb{A}(-,x_\mu)\circ\psi_2)\\
&=\bv_\lam\bw_{\mu\geq\lambda}\psi_1(x_\mu)\wedge\bv_\lam\bw_{\mu\geq\lambda}\psi_2(x_\mu)\\
&=\bv_\lam\bw_{\mu\geq\lambda}(\psi_1(x_\mu)\wedge\psi_2(x_\mu)).
\end{align*}
For each $\lam$, let $\si$ be an upper bound of $\si_0$ and $\lam$. Then $\phi(x_\si)\geq t\phi-\delta$, and
\begin{align*}
\phi\circ(\psi_1\wedge\psi_2)&\geq (\bbA(x_\si,x_\si)\ra\phi(x_{\si}))\&(\psi_1(x_\si)\wedge\psi_2(x_\si))\\
&\geq ((t\phi+\delta)\ra(t\phi-\delta))\&(\psi_1(x_\si)\wedge\psi_2(x_\si)\wedge t\phi)\\
&\geq \psi_1(x_\si)\wedge\psi_2(x_\si)\wedge t\phi-\varepsilon,
\end{align*} hence \[\phi\circ(\psi_1\wedge\psi_2)\geq t\phi\wedge\bw_{\mu\geq\lambda}(\psi_1(x_\mu)\wedge \psi_2(x_\mu))\] by arbitrariness of $\varepsilon$.
Therefore, $$\phi\circ(\psi_1\wedge\psi_2)\geq t\phi\wedge\bv_\lam\bw_{\mu\geq\lambda}(\psi_1(x_\mu)\wedge \psi_2(x_\mu))\geq(\phi\circ\psi_1)\wedge(\phi\circ\psi_2).$$ 

The converse inequality is obvious, hence the conclusion follows.
\end{proof}

\begin{thm}\label{BC implies C} If $\CQ$ is the interval $[0,1]$ coupled with a continuous t-norm $\&$, then Cauchy complete ${\sf D}(\CQ)$-categories are bicomplete.\end{thm}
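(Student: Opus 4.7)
The plan is to reduce the statement to a fact about weights via Corollary \ref{sup=limit}: if the weight $\phi$ generated by a biCauchy net $\{x_\lam\}$ is Cauchy, then by Cauchy completeness $\phi$ has a supremum, and by Corollary \ref{sup=limit} any such supremum is a bilimit of $\{x_\lam\}$. So the whole proof reduces to the key claim: \emph{the weight generated by any biCauchy net in a ${\sf D}(\CQ)$-category is Cauchy.}

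To prove the claim, fix a biCauchy net $\{x_\lam\}$, let $a=t\phi=\bv_\lam\bw_{\mu\geq\lam}\bbA(x_\mu,x_\mu)$, let $\phi=\bv_\lam\bw_{\mu\geq\lam}\bbA(-,x_\mu)$ be the generated weight, and let $\psi=\bv_\lam\bw_{\mu\geq\lam}\bbA(x_\mu,-)$ be the generated coweight (also of type $a$). The natural candidate for a left adjoint of $\phi$ is $\psi$. One of the two adjunction inequalities, $\psi\circ\phi\leq\bbA$, comes for free from Lemma \ref{psiphileqid}, since a biCauchy net is in particular forward Cauchy. The real work is to verify the other inequality, $a=1_{*_a}\leq\phi\circ\psi$; combined with the automatic bound $\phi\circ\psi\leq a$ coming from the definition of ${\sf D}(\CQ)(a,a)$, this gives $\phi\circ\psi=a$.

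For this, compute $\phi\circ\psi=\bv_{x\in\bbA_0}(\bbA(x,x)\ra\phi(x))\&\psi(x)$ using Fact 1 and estimate the summand at $x=x_\mu$ for $\mu$ sufficiently large. By Equation (\ref{eq2}) (the biCauchy condition), given $\delta>0$ there exists $\lam_0$ with $\bbA(x_\mu,x_\nu)>a-\delta$ for all $\mu,\nu\geq\lam_0$; taking the appropriate meets and joins, this forces $\phi(x_\mu),\psi(x_\mu)\geq a-\delta$ and $\bbA(x_\mu,x_\mu)<a+\delta$ for all $\mu\geq\lam_0$. Monotonicity of $\&$ and antitonicity of $\ra$ in the first variable then give
$$(\bbA(x_\mu,x_\mu)\ra\phi(x_\mu))\&\psi(x_\mu)\ \geq\ \bigl((a+\delta)\ra(a-\delta)\bigr)\&(a-\delta).$$
Now Lemma \ref{uniformly continuous}, applied with $c=a$, says exactly that the right-hand side is $\geq a-\varepsilon$ whenever $\delta$ is chosen small enough relative to a prescribed $\varepsilon>0$. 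Letting $\varepsilon\to0$ yields $\phi\circ\psi\geq a$, as needed.

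The only loose end is the degenerate case $a=0$, where Lemma \ref{uniformly continuous} does not directly apply. But in that case $\phi$ is the trivial weight on $\bbA$ (and so is $\psi$), which by Fact 3 is Cauchy automatically. Apart from this bookkeeping, the argument is essentially a clean application of Lemma \ref{psiphileqid} and Lemma \ref{uniformly continuous}; I expect the main obstacle to be nothing conceptual but simply pinning down a single $\lam_0$ that makes all three estimates on $\phi(x_\mu)$, $\psi(x_\mu)$, and $\bbA(x_\mu,x_\mu)$ hold simultaneously, so that the pointwise lower bound on $\phi\circ\psi$ can be extracted cleanly. The heavy lifting was already done in Theorem \ref{sup=Yoneda} and Corollary \ref{sup=limit}, which translate the net-theoretic problem into a weight-theoretic one.
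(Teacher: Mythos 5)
Your proposal is correct and follows essentially the same route as the paper: reduce via Corollary \ref{sup=limit} to showing that every biCauchy weight is Cauchy, take the generated coweight $\psi$ as the candidate left adjoint, get $\psi\circ\phi\leq\bbA$ from Lemma \ref{psiphileqid}, and establish $\phi\circ\psi\geq a$ by combining the biCauchy estimates $\phi(x_\mu),\psi(x_\mu)\geq a-\delta$, $\bbA(x_\mu,x_\mu)<a+\delta$ with Lemma \ref{uniformly continuous} at $c=a$. The paper's proof also treats the degenerate case $a=0$ exactly as you do, via the trivial weight.
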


\begin{proof}  By virtue of Corollary \ref{sup=limit}, it is sufficient to show that each  biCauchy weight on any  ${\sf D}(\CQ)$-category $\bbA$ is Cauchy.

Suppose that $\phi:\bbA\oto*_{t\phi}$ is generated by a biCauchy net $\{x_\lam\}$ in a ${\sf D}(\CQ)$-category $\bbA$. Then $\phi=\bv_\lam\bw_{\mu\geq\lambda}\mathbb{A}(-,x_\mu)$ and $t\phi=\lim_{\mu,\nu}\mathbb{A}(x_\mu,x_\nu)$. Let $a=t\phi$.

If $a=0$, then $\phi$ is the trivial weight on $\bbA$, hence Cauchy. It remains to show that $\phi$ is Cauchy in the case $a>0$. Indeed, we claim that the coweight (generated by $\{x_\lam\}$) $$\psi:*_{a}\oto\bbA, \quad \psi(x)=\bv_\lam\bw_{\mu\geq\lambda}\mathbb{A}(x_\mu,x)$$ is a left adjoint of $\phi$. That is,  $\phi\circ\psi\geq a$ and $\psi\circ\phi\leq \bbA$. The inequality $\psi\circ\phi\leq \bbA$ is proved in Lemma \ref{psiphileqid}. Now  we check that $\phi\circ\psi\geq a$.

For any $\varepsilon>0$, by Lemma \ref{uniformly continuous}, there exists $\delta>0$ such that for any $c\leq a$, $$((a+\delta)\ra(a-\delta))\&(c-\delta)\geq c-\varepsilon.$$
Since $\{x_\lam\}$ is biCauchy, for that $\delta$, there exists some $\si_0$ such that $|\bbA(x_\mu,x_\nu)-a|<\delta$ whenever $\nu,\mu\geq\si_0$,  hence $$\phi(x_{\si}) =\bv_\lam\bw_{\mu\geq\lambda}\mathbb{A}(x_{\si},x_\mu)\geq a-\delta, \quad \psi(x_\si)= \bv_\lam\bw_{\mu\geq\lambda}\mathbb{A}(x_\mu,x_\si)\geq a-\delta$$ for all $\sigma\geq\si_0$. Therefore,
\begin{align*}
\phi\circ\psi&=\bv_x(\phi(x)\circ\psi(x))\\
&\geq(\mathbb{A}(x_{\si_0},x_{\si_0})\ra\phi(x_{\si_0}))\&\psi(x_{\si_0})\\
&\geq ((a+\delta)\ra(a-\delta))\&(a-\delta)\\
&\geq a-\varepsilon,
\end{align*}
hence  $\phi\circ\psi\geq a$ by arbitrariness of $\varepsilon$. \end{proof}


The above theorem shows that each biCauchy weight on a   ${\sf D}(\CQ)$-category is Cauchy. But, the converse conclusion does not hold in general, as shall be seen in Theorem \ref{main result}. 

\section{When does Yoneda completeness imply flat completeness?}
The result in this section shows that Yoneda completeness  does not imply flat completeness in general.   Precisely, it is shown that if $\CQ$ is the unit interval coupled with a continuous t-norm $\&$, then that Yoneda completeness implies flat completeness happens only in the case that $\&$ is either isomorphic to the {\L}ukasiewicz t-norm or to the product t-norm.

\begin{thm}\label{main result} If $\CQ$ is the interval $[0,1]$ coupled with a continuous t-norm $\&$, then the following conditions are equivalent:
\begin{enumerate}[(1)] \item  $\&$ has no non-trivial idempotent elements.
\item  Each non-trivial flat weight on any ${\sf D}(\CQ)$-category   is   forward Cauchy. \item Each non-trivial Cauchy weight on any ${\sf D}(\CQ)$-category   is biCauchy.
\end{enumerate}\end{thm}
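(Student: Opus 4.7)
The plan is to establish the two equivalences $(1)\Leftrightarrow(2)$ and $(1)\Leftrightarrow(3)$ in parallel. The reverse implications $(2)\Rightarrow(1)$ and $(3)\Rightarrow(1)$ are both witnessed by a single counterexample, while the forward implications $(1)\Rightarrow(2)$ and $(1)\Rightarrow(3)$ are two analogous constructions.

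For the counterexample (proving $(2)\Rightarrow(1)$ and $(3)\Rightarrow(1)$ by contraposition), suppose $\&$ has a non-trivial idempotent $e\in(0,1)$. Let $\bbA$ be the two-object ${\sf D}(\CQ)$-category with $\bbA_0=\{x,y\}$, $tx=ty=1$, $\bbA(x,x)=\bbA(y,y)=1$ and $\bbA(x,y)=\bbA(y,x)=e$; reflexivity is immediate and transitivity reduces to $e\&e=e$ via Equation (\ref{tensor with an idempoent}). Set $\phi:\bbA\oto *_e$ and $\psi:*_e\oto\bbA$ to be identically $e$. Using $(1\ra e)=e$ and $e\&e=e$, direct calculation gives $(\phi\circ\psi)(*,*)=e=1_{*_e}$ and $(\psi\circ\phi)(z,z')=e\leq\bbA(z,z')$, so $\psi\dashv\phi$ and $\phi$ is a non-trivial Cauchy (hence flat, by Proposition \ref{Cauchy is flat}) weight. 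However, every net $\{x_\lam\}$ in $\bbA$ has $\bbA(x_\lam,x_\lam)=1$, so every forward Cauchy or biCauchy net in $\bbA$ has type $1$; consequently its generated weight has type $1\neq e=t\phi$, and $\phi$ can be neither forward Cauchy nor biCauchy. This refutes (2) and (3) simultaneously.

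For $(1)\Rightarrow(2)$, Theorem \ref{ord sum} forces $\&$ to be isomorphic to $\&_\L$ or $\&_P$, so $a_-=0$ and $a^+=1$ for every non-idempotent $a$, which validates the uniform continuity estimates of Lemmas \ref{t-norm} and \ref{uniformly continuous}. Given a non-trivial flat weight $\phi$ with $t\phi=a>0$, I would index by the directed set $\Lambda=\{(F,\varepsilon):F\subseteq\bbA_0\text{ finite},\ 0<\varepsilon<a\}$, ordered by $(F,\varepsilon)\leq(F',\varepsilon')$ iff $F\subseteq F'$ and $\varepsilon'\leq\varepsilon$. For each $\lambda=(F,\varepsilon)$, apply the finite-meet preservation of $\phi\circ -$ to the type-$a$ truncations $a\wedge\bbA(y,-)$ of the Yoneda coweights for $y\in F$, together with the top coweight $a\wedge t$, to select $x_\lam\in\bbA_0$ satisfying $\phi(x_\lam),\bbA(x_\lam,x_\lam)>a-\varepsilon$ and $\bbA(y,x_\lam)>\phi(y)-\varepsilon$ for every $y\in F$; continuity of $\&$ (Lemma \ref{t-norm}) converts the meet-preservation into these pointwise bounds. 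After restricting to the cofinal sub-directed-set in which $F_\nu$ contains every $x_\mu$ with $\mu\leq\nu$, the net $\{x_\lam\}$ is forward Cauchy — since $\bbA(x_\mu,x_\nu)>\phi(x_\mu)-\varepsilon_\nu>a-\varepsilon_\mu-\varepsilon_\nu$ for $\mu\leq\nu$ — and generates $\phi$: the $\geq$ direction from the selection, the $\leq$ direction from the distributor axiom $(tx\ra\phi(x))\&\bbA(y,x)\leq\phi(y)$ combined with $\phi(x_\lam)\to a$. The proof of $(1)\Rightarrow(3)$ runs parallel: for a Cauchy weight $\phi$ with left adjoint $\psi$, the selection additionally demands $\bbA(x_\lam,y)>\psi(y)-\varepsilon$ for $y\in F$, whose existence is secured by applying flatness to both Yoneda coweights and Yoneda weights, and the same net is then biCauchy by a symmetric estimate. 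The main obstacle is exactly the selection step in $(1)\Rightarrow(2)$: the Yoneda coweights have distinct types $ty$, so flatness cannot be applied to them directly, and one must first truncate to a common type $a$ and then recover pointwise bounds via the continuity estimates of Lemmas \ref{t-norm} and \ref{uniformly continuous} — precisely the step that relies on the absence of non-trivial idempotents to ensure $[a_-,a^+]=[0,1]$.
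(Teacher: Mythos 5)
Your counterexample for $(2)\Rightarrow(1)$ and $(3)\Rightarrow(1)$ is correct and is a legitimate variant of the paper's (the paper instead takes $\bbA(x,x)=b>a$ and all other hom-values $0$, and argues that neither of the two forward Cauchy weights $\bbA(-,x)$, $\bbA(-,y)$ equals $\phi$); your type-mismatch argument --- every net in your $\bbA$ has type $1$, so no forward Cauchy net can generate a weight of type $e<1$ --- is clean and valid, and idempotency of $e$ is used exactly where it should be, namely in $\phi\circ\psi=e\&e=e$.

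The forward direction, however, has a genuine gap at the selection step, which you name as the main obstacle but do not resolve. Meet-preservation applied to the truncated Yoneda coweights gives
\[\phi\circ\Big((a\wedge t)\wedge\bw_{y\in F}(a\wedge\bbA(y,-))\Big)=a\wedge\bw_{y\in F}\phi\circ(a\wedge\bbA(y,-)),\]
but $\phi\circ(a\wedge\bbA(y,-))$ is in general strictly smaller than $\phi(y)$: one only has the upper bound $\phi\circ(a\wedge\bbA(y,-))\leq\phi(y)$ and the lower bound $(ty\ra\phi(y))\&(a\wedge ty)$ coming from the term $x=y$, and for the {\L}ukasiewicz t-norm with $ty=1$, $a=0.5$, $\phi(y)=0.3$ the latter is $0.3\,\&_{\L}\,0.5=0$. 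Since truncation is not composition with a scalar, $\phi\circ-$ does not interact with it in any controlled way, so the right-hand side can collapse to a quantity far below $\bw_{y\in F}\phi(y)$ and no element $x_\lam$ with $\bbA(y,x_\lam)>\phi(y)-\varepsilon$ can be extracted; Lemmas \ref{t-norm} and \ref{uniformly continuous} cannot repair this, because the information is lost before any continuity estimate is applied. The paper's proof of Proposition \ref{flat module} shows what is actually required: one applies meet-preservation to \emph{rescaled} Yoneda coweights, $\psi_i(y)=\lambda_i+k+X(x_i,y)$ in the partial-metric presentation, with the scalars $\lambda_i$ tuned so that the two values $\phi\circ\psi_i$ become comparable; only then does the meet retain the data of each individual $\phi(x_i)$, yielding the interpolation property (b) that makes the formal-ball index set $D=\{(x,r)\mid\phi(x)<X(x,x)+r\}$ directed. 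That rescaling is the heart of the argument and is absent from your sketch. Your treatment of $(1)\Rightarrow(3)$ is also not meaningful as stated --- flatness concerns only the action of $\phi\circ-$ on coweights, so ``applying flatness to Yoneda weights'' has no content; the paper instead obtains the biCauchy net from the adjunction inequality $\phi\circ\psi\geq t\phi$ (citing \cite{Pu2012}, Proposition 4.10) and transfers the {\L}ukasiewicz case to bounded generalized partial metric spaces via $\phi\mapsto1-\phi$.
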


\begin{proof}
Suppose that $a$ is a non-trivial idempotent element of $\&$. Take $b\in (a,1)$. Then $$(b\ra a)\&a=\min\{b\ra a,a\}=a.$$ Let $\bbA$ be a ${\sf D}(\CQ)$-category with two objects $x$, $y$, and  $\bbA(x,x)=b$, $\bbA(x,y)=\bbA(y,x)=\bbA(y,y)=0$. The weight $$\phi:\bbA\oto *_a, \quad \phi(x)=a,\ \phi(y)=0$$  is  Cauchy with a left adjoint given by  $$\psi:*_a\oto\bbA, \quad \psi(x)=a,\ \psi(y)=0.$$   But, $\phi$ is not forward Cauchy  since neither of the forward Cauchy weights on $\bbA$,  $\bbA(-,x)$ and $\bbA(-,y)$, is   equal to $\phi$. This proves that $(2)\Rightarrow(1)$ and that $(3)\Rightarrow(1)$.

If $\&$ has no non-trivial idempotent elements, then $\&$ is either isomorphic to the {\L}ukasiewicz t-norm or to the product t-norm. So, to see that  (1) implies both (2) and (3), it is sufficient to show that for both $\CQ=([0,1], \&_P,1)$ and $\CQ=([0,1], \&_{\L},1)$   each non-trivial flat (Cauchy, resp.) weight on any ${\sf D}(\CQ)$-category is forward Cauchy (biCauchy, resp.).   This will be proved separately in the following  \ref{flat1} and \ref{flat module'}.  \end{proof}

\begin{thm}\label{flat1} If $\CQ=([0,1],\&_P,1)$, then each non-trivial flat (Cauchy, resp.) weight on any ${\sf D}(\CQ)$-category is forward Cauchy (biCauchy, resp.).\end{thm}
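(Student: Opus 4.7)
My plan is to construct, from a non-trivial flat weight $\phi:\bbA\oto *_a$ with $a>0$, a forward Cauchy net $\{x_d\}_{d\in D}$ in $\bbA$ that generates $\phi$; by Theorem \ref{sup=Yoneda}, exhibiting such a net identifies $\phi$ as a forward Cauchy weight. I would index the net over the directed set $D$ of pairs $(F,\varepsilon)$ with $F\subseteq\bbA_0$ finite and $\varepsilon>0$, ordered by $(F,\varepsilon)\leq(F',\varepsilon')$ iff $F\subseteq F'$ and $\varepsilon\geq\varepsilon'$. The overall strategy imitates Vickers's construction of forward Cauchy sequences from flat presheaves on generalized metric spaces, which is available here because $([0,1],\&_P,1)$ is isomorphic to Lawvere's quantale (Example \ref{continuity of implication}); the extra work is the bookkeeping forced by the nontrivial self-distances $\bbA(x,x)$ of the underlying fuzzy set.

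The linchpin is a Key Lemma: for every $(F,\varepsilon)\in D$ with $F=\{z_1,\dots,z_n\}$, some $x\in\bbA_0$ satisfies \textup{(a)} $\bbA(x,x),\phi(x)>a-\varepsilon$ and \textup{(b)} $\bbA(z_i,x)>\phi(z_i)-\varepsilon$ for each $i$. Item (a) is easy: from the top-preservation $\phi\circ T=a$ (Fact 2 of Section \ref{section 4}) applied to $T(y)=a\wedge\bbA(y,y)$, for any $\delta>0$ there must exist $y$ with $\phi(y)>a-\delta$, whence $\bbA(y,y)\geq\phi(y)>a-\delta$ automatically. For (b), I would apply the binary-meet-preservation part of flatness to $T\wedge\bigwedge_i\psi_{z_i}$ for suitably designed coweights $\psi_{z_i}:*_a\oto\bbA$ built from the Yoneda coweights $\bbA(z_i,-)$ by truncation or by a multiplicative rescaling (of the form $\bbA(z_i,-)\cdot\phi(z_i)/\bbA(z_i,z_i)$). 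The identity $\phi\circ(T\wedge\bigwedge_i\psi_{z_i})=a\wedge\bigwedge_i(\phi\circ\psi_{z_i})$, together with the continuity of $b\ra c=c/b$ on $(0,1]\times[0,1]$ (Example \ref{continuity of implication}), should then deliver a single $y$ realising (b). The main obstacle is this coweight design: the naive choice $\psi_{z_i}(y)=\bbA(z_i,y)\wedge a$ yields, via flatness, only the joint bound $\bbA(z_i,x)>\min_j\phi(z_j)-\varepsilon$, so a more careful normalisation---one that genuinely uses the absence of nontrivial idempotents in $\&_P$---is required to line up the individual targets $\phi(z_i)$; this is precisely the step that cannot be replicated for G\"odel-like t-norms.

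Once the Key Lemma is available, a standard transfinite construction produces the net $\{x_d\}$ so that whenever $d\leq d'$ in $D$ the value $\bbA(x_d,x_{d'})$ is controlled by the Key Lemma applied at stage $d'$ to a finite set containing $x_d$. This gives $\bbA(x_d,x_d),\phi(x_d)\to a$ and, for $d\leq d'$, $\bbA(x_d,x_{d'})>\phi(x_d)-\varepsilon_{d'}\geq a-\varepsilon_d-\varepsilon_{d'}$, so the net is forward Cauchy; that it generates $\phi$ follows from (b) for the inequality $\bv_\lam\bw_{\mu\geq\lam}\bbA(z,x_\mu)\geq\phi(z)$, and from the weight axiom $\phi(z)\geq\phi(x_\mu)\&(\bbA(x_\mu,x_\mu)\ra\bbA(z,x_\mu))$ combined with $\bbA(x_\mu,x_\mu)/\phi(x_\mu)\to 1$ for the matching upper bound. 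The Cauchy-to-biCauchy statement is handled in parallel: a Cauchy $\phi$ comes accompanied by a left-adjoint coweight $\psi$, and running the construction dually with $\psi$ supplies the backward control $\bbA(x_d,z)\to\psi(z)$; the resulting net is both forward and backward Cauchy, hence biCauchy, and its bilimit is obtained by Proposition \ref{LC implies bi-CC}.
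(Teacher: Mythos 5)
Your overall strategy is the same as the paper's --- pass through the isomorphism with Lawvere's quantale is optional, but the substance is identical: use top-preservation and binary-meet-preservation of $\phi\circ-$ on modified representable coweights to manufacture a generating forward Cauchy net, then invoke Theorem \ref{sup=Yoneda}. The problem is that you have explicitly deferred the one step that carries the entire weight of the argument. Your ``Key Lemma'' item (b) is exactly the point where flatness must be converted into a \emph{single} witness $y$ that is simultaneously close to each target $\phi(z_i)$, and you yourself observe that the naive coweights $\bbA(z_i,-)\wedge a$ only yield the useless joint bound $\min_j\phi(z_j)-\varepsilon$; you then say a ``more careful normalisation \dots is required'' without producing it. That normalisation is the proof. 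In the paper (Proposition \ref{flat module}, working in the partial-metric picture) it is done by applying meet-preservation to the \emph{cross-shifted} representable coweights $\psi_i(y)=\lambda_i+k+X(x_i,y)$ with $\lambda_1=\delta_2-X(x_1,x_1)$ and $\lambda_2=\delta_1-X(x_2,x_2)$: the crossing of the indices is what aligns $\phi\circ\psi_1$ and $\phi\circ\psi_2$ below the common threshold $\delta_1+\delta_2+k$, so that $\phi\circ(\psi_1\wedge\psi_2)=\max_i(\phi\circ\psi_i)$ forces one $y$ satisfying both estimates. Your multiplicative rescaling $\bbA(z_i,-)\cdot\phi(z_i)/\bbA(z_i,z_i)$ is in the right spirit (it is roughly the image of the paper's shift under the isomorphism), but until the constants are chosen and the resulting inequality is actually unwound, the proposal has a hole precisely at its load-bearing joint.

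There is a second, subtler gap: item (a) of your Key Lemma is too weak. Knowing $\phi(x)>a-\varepsilon$ and $\bbA(x,x)>a-\varepsilon$ does \emph{not} control $\bbA(x,x)\ra\phi(x)$; since $\phi(x)\leq a$ while $\bbA(x,x)$ may be as large as $1$, a net built only from such points can have $\bbA(x_d,x_d)\to 1\neq a$, in which case it does not generate a weight of type $a$ and your final step, which invokes ``$\bbA(x_\mu,x_\mu)/\phi(x_\mu)\to1$'', is unjustified. The correct requirement --- the analogue of the paper's condition $\inf_{\phi(x)<\infty}(\phi(x)-X(x,x))=0$ --- is $\bbA(x,x)\ra\phi(x)>1-\varepsilon$, and it is extracted from top-preservation $\phi\circ(b\wedge t)=b\wedge a$ with $b<a$ (or $b=\top$ in the partial-metric picture), not with $b=a$ as you use it. Relatedly, the paper's index set is the formal-ball-type set $D=\{(x,r)\mid \phi(x)<X(x,x)+r\}$, whose membership condition encodes exactly this missing control; your $(F,\varepsilon)$ indexing can be made to work but only after (a) is strengthened. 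Finally, for the Cauchy/biCauchy half the paper simply cites Proposition 4.10 of \cite{Pu2012}; your proposal to rerun the construction dually with the left adjoint coweight is plausible but likewise unexecuted.
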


\begin{proof}
Since the quantale $([0,1],\&_P,1)$   is isomorphic to  Lawvere's quantale $([0,\infty]^{\rm op},+,0)$,  and ${\sf D}(\CQ)$-categories for $\CQ=([0,\infty]^{\rm op},+,0)$ are exactly generalized partial metric spaces (Example \ref{PMS}),  it suffices to show that  each non-trivial flat (Cauchy, resp.)  weight on any generalized partial metric space is forward Cauchy (biCauchy, resp.). This will be done in the following   \ref{flat module} and \ref{cauchy=bicauchy}. \end{proof}

To avoid unnecessary notations, we simply write $X$ for a generalized partial metric space $(X,p)$ and write $X(x,y)$ for $p(x,y)$. A weight $\phi:X\oto*_{t\phi}$ (of type $t\phi$) on a generalized partial metric space $X$ is, by definition, a map $\phi: X\lra [0,\infty]$ such that
$$X(x,y)+(\phi(y)-X(y,y))\geq \phi(x) \quad \textrm{and} \quad \phi(x)\geq \max \{X(x,x),t\phi\}$$ for all $x,y\in X$.
Dually, a coweight $\psi:*_{t\psi} \oto X$ on  $X$ is a map $\psi: X\lra [0,\infty]$ such that $$X(x,y)+(\psi(x)-X(x,x))\geq \psi(y)\quad \textrm{and} \quad \psi(x)\geq\max\{X(x,x),t\psi\}$$ for all $x,y\in X$.

For a weight $\phi$ and a coweight $\psi$ on a generalized partial metric space $X$, we have $$\phi\circ \psi=\inf_{x}(\phi(x)+(\psi(x)-X(x,x))), $$ agreeing again that $\infty-\infty=0$.

\begin{lem}\label{GFC is FC}
Let $\{x_\lam\}$ be a net in a generalized partial metric space $X$. If $$\lim_{\nu\geq\mu}(X(x_\mu,x_\nu)-X(x_\mu,x_\mu))=0,$$ then $\{x_\lam\}$ is forward Cauchy. \end{lem}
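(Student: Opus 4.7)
The plan is to translate ``forward Cauchy'' into the usual language of limsup/liminf on $[0,\infty]$ and then sandwich both expressions between quantities the hypothesis forces to coincide. Since joins and meets in $\CQ=([0,\infty]^{\rm op},+,0)$ are reversed, forward Cauchyness of $\{x_\lam\}$ unpacks to the equality
\[
a \;:=\; \inf_\lam\sup_{\lam\le\mu\le\nu}X(x_\mu,x_\nu) \;=\; \sup_\lam\inf_{\lam\le\mu\le\nu}X(x_\mu,x_\nu) \;=:\; b
\]
in the standard order on $[0,\infty]$; the inequality $b\le a$ is automatic, so only $a\le b$ requires proof. The hypothesis reads: for each $\varepsilon>0$ there is $\lam_0$ with $X(x_\mu,x_\nu)\le X(x_\mu,x_\mu)+\varepsilon$ whenever $\lam_0\le\mu\le\nu$.

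First, I would show the scalar net $\{X(x_\lam,x_\lam)\}$ converges in $[0,\infty]$. Combining the hypothesis with the partial-metric axiom $X(x_\nu,x_\nu)\le X(x_\mu,x_\nu)$ (Example \ref{PMS}(1)) gives $X(x_\nu,x_\nu)\le X(x_\mu,x_\mu)+\varepsilon$ for $\lam_0\le\mu\le\nu$. Fixing $\mu\ge\lam_0$ and taking $\sup_{\nu\ge\mu}$ yields $\sup_{\nu\ge\mu}X(x_\nu,x_\nu)\le X(x_\mu,x_\mu)+\varepsilon$, and then $\inf$ over $\mu\ge\lam_0$ gives $\limsup_\mu X(x_\mu,x_\mu)\le\liminf_\mu X(x_\mu,x_\mu)+\varepsilon$. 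Letting $\varepsilon\to 0$ forces equality, so $L:=\lim_\mu X(x_\mu,x_\mu)\in[0,\infty]$ exists.

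Second, I would pin down $a=b=L$ by two easy pairs of sandwiches. For $b$, specialising $\mu=\nu$ in the inner infimum gives $b\le\liminf_\mu X(x_\mu,x_\mu)=L$, while $X(x_\mu,x_\nu)\ge X(x_\nu,x_\nu)$ gives $b\ge\sup_\lam\inf_{\nu\ge\lam}X(x_\nu,x_\nu)=L$. For $a$, specialising $\mu=\nu$ gives $a\ge\limsup_\mu X(x_\mu,x_\mu)=L$; conversely, for $\lam\ge\lam_0$ the hypothesis yields $\sup_{\lam\le\mu\le\nu}X(x_\mu,x_\nu)\le\sup_{\mu\ge\lam}X(x_\mu,x_\mu)+\varepsilon$, and since the function $f(\lam):=\sup_{\lam\le\mu\le\nu}X(x_\mu,x_\nu)$ is antitone in $\lam$ one has $a=\inf_{\lam\ge\lam_0}f(\lam)\le L+\varepsilon$ for every $\varepsilon>0$.

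The main obstacle will be bookkeeping: one has to be careful to keep track of which variable is being sup'd or inf'd (the double index $\lam\le\mu\le\nu$ is easy to mishandle), to invoke the partial-metric axiom at the single spot where it breaks the asymmetry between $\mu$ and $\nu$ (so that the diagonal net can be controlled), and to remember that joins and meets in $[0,\infty]^{\rm op}$ are reversed relative to the usual order on $[0,\infty]$.
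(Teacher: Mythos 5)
Your proof is correct and follows essentially the same route as the paper's: both arguments first use the hypothesis together with the partial-metric axiom $X(x_\nu,x_\nu)\leq X(x_\mu,x_\nu)$ to force the diagonal net $\{X(x_\mu,x_\mu)\}$ to converge to some $L\in[0,\infty]$, and then squeeze the off-diagonal terms against $L$. The only difference is in execution: your $\limsup\leq\liminf+\varepsilon$ bookkeeping replaces the paper's eventual-boundedness step and two-cluster-point contradiction, and treats the value $\infty$ uniformly rather than by a separate case.
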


\begin{proof} By Remark \ref{FC=convergent}(1), it suffices to show that the limit $\lim_{\nu\geq\mu}X(x_\mu,x_\nu)$ exists ($\infty$ is allowed). If there exists  some $\lam$ such that $X(x_\mu,x_\mu)=\infty$  for all $\mu\geq\lam$, then it is trivial that $\lim_{\nu\geq\mu}X(x_\mu,x_\nu)=\infty$.
Now, suppose that  for all $\lam$, there exists some $\mu\geq\lam$ such that $X(x_\mu,x_\mu)<\infty$. We prove, in this case, that the limit $\lim_{\nu\geq\mu}X(x_\mu,x_\nu)$ exists in three steps.

\textbf{Step 1}.  The net $\{X(x_\mu,x_\mu)\}$ is eventually bounded. Since $$\lim_{\nu\geq\mu}(X(x_\mu,x_\nu)-X(x_\mu,x_\mu))=0,$$ there exists some $\lambda$ such that $X(x_\lambda,x_\lambda)<\infty$ and that $X(x_\mu,x_\nu)-X(x_\mu,x_\mu)<1$ whenever $\nu\geq\mu\geq\lambda$. If $\{X(x_\mu,x_\mu)\}$ is not eventually bounded, there would exist $\mu\geq\lambda$ such that $X(x_\mu,x_\mu)\geq X(x_\lambda,x_\lambda)+2$. So, $$ X(x_{\lambda},x_\mu)-X(x_{\lambda},x_{\lambda})\geq X(x_\mu,x_\mu)-X(x_{\lambda},x_{\lambda})\geq 2,$$ a contradiction.

\textbf{Step 2}. The limit $\lim_\lambda X(x_\lambda,x_\lambda)$ exists. Otherwise, $\{X(x_\lambda,x_\lambda)\}$  would have two cluster points, say, $A$ and $B$. Suppose that $A<B$ and let $\varepsilon=\frac{A-B}{3}$. It follows from   $$\lim_{\nu\geq\mu}(X(x_\mu,x_\nu)-X(x_\mu,x_\mu))=0 $$ that there exists some $\lambda_0$ such that $X(x_\mu,x_\nu)-X(x_\mu,x_\mu)<\varepsilon$ whenever $\nu\geq\mu\geq\lambda_0$.  Because $A,B$ are cluster points of $\{X(x_\lambda,x_\lambda)\}$, there exist some $\nu_0, \mu_0$ such that
$\nu_0\geq\mu_0\geq\lambda_0$, $|X(x_{\mu_0},x_{\mu_0})-A|<\varepsilon$ and   $|X(x_{\nu_0},x_{\nu_0})-B|<\varepsilon.$  Then  $X(x_{\nu_0},x_{\nu_0})-X(x_{\mu_0},x_{\mu_0})\geq \varepsilon$, a contradiction to that
$$X(x_{\nu_0},x_{\nu_0})-X(x_{\mu_0},x_{\mu_0})\leq X(x_{\mu_0},x_{\nu_0})-X(x_{\mu_0}, x_{\mu_0})<\varepsilon.$$

\textbf{Step 3}. The limit $\lim_{\nu\geq\mu}X(x_\mu,x_\nu)$ exists. Let $A=\lim_\lambda X(x_\lambda,x_\lambda)$. Then
$$|X(x_\mu,x_\nu)-A|\leq |X(x_\mu,x_\nu)-X(x_\mu,x_\mu)| + |X(x_\mu,x_\mu)-A|$$ for all $\nu\geq\mu$, showing that
$\lim_{\nu\geq\mu}X(x_\mu,x_\nu)=A$.   \end{proof}

\begin{prop}\label{flat module} Let $\phi$ be a non-trivial weight on a generalized partial metric space $X$. The  following   are equivalent: \begin{enumerate}[\rm(1)]
\item
$\phi$  is flat.
\item  $\phi$ satisfies the following conditions: \begin{enumerate}
\item[\rm(a)]$\inf_{\phi(x)<\infty}(\phi(x)-X(x,x))=0$;
\item[\rm(b)]if $\phi(x_i)<X(x_i,x_i)+\delta_i\ (i=1,2)$, then there exist some $y\in X$ and $\varepsilon>0$ such that $\phi(y)<X(y,y)+\varepsilon$ and $X(x_i,y)+\varepsilon<X(x_i,x_i)+\delta_i\ (i=1,2)$.
\end{enumerate}
\item  $\phi$ is forward Cauchy.
\end{enumerate}\end{prop}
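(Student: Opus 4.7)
The plan is to establish the equivalence via the cycle $(3)\Rightarrow(1)\Rightarrow(2)\Rightarrow(3)$. The first implication $(3)\Rightarrow(1)$ is already contained in the proof of Theorem~\ref{FC weight implies flat weight}, which shows that every forward Cauchy weight on a ${\sf D}(\CQ)$-category is flat; specializing to $\CQ=([0,\infty]^{\mathrm{op}},+,0)$ gives the generalized-partial-metric version.

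For $(1)\Rightarrow(2)$, I would translate Fact~2 into Lawvere-quantale arithmetic (meets become maxima in the usual order, $\&=+$, and $\to$ is truncated subtraction). Top-preservation $\phi\circ(b\wedge t)=b\wedge t\phi$ unfolds, for $b\geq t\phi$, into $\inf_x((\phi(x)-X(x,x))+\max\{b,X(x,x)\})=b$; splitting the infimum by the sign of $X(x,x)-b$ and letting $b\to\infty$ yields~(a). For~(b), given $x_1,x_2$ with $c_i:=\phi(x_i)-X(x_i,x_i)<\delta_i$, introduce shifts $\rho_i\geq 0$ synchronized by $\phi(x_1)+\rho_1=\phi(x_2)+\rho_2$ and form the coweights $\psi_i(z):=X(x_i,z)+\rho_i$ of the common type $b:=\min_i(X(x_i,x_i)+\rho_i)$. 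A Yoneda-style computation yields $\phi\circ\psi_i=\phi(x_i)+\rho_i$, so preservation of the binary meet $\phi\circ(\psi_1\wedge\psi_2)=(\phi\circ\psi_1)\wedge(\phi\circ\psi_2)$ collapses to
\[
\inf_z\Bigl((\phi(z)-X(z,z))+\max_i(X(x_i,z)+\rho_i)\Bigr)=\max_i(\phi(x_i)+\rho_i).
\]
Choosing $\eta\in(0,\tfrac12\min_i(\delta_i-c_i))$ and approximating the infimum produces a $z$ with $\alpha:=\phi(z)-X(z,z)$ satisfying $\alpha+X(x_i,z)<\phi(x_i)+\eta$ for both $i$; setting $y:=z$ and $\varepsilon:=\alpha+\eta$ then verifies~(b) after a short arithmetic check using $c_i+2\eta<\delta_i$.

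For $(2)\Rightarrow(3)$, take $\Lambda=\{(x,\delta):\delta>0,\ \phi(x)<X(x,x)+\delta\}$ ordered by $(x,\delta)\leq(y,\varepsilon)\iff X(x,y)+\varepsilon\leq X(x,x)+\delta$; transitivity comes from the generalized triangle inequality, non-emptiness from~(a), and directedness is precisely~(b). With $x_{(x,\delta)}:=x$, the order forces $X(y_1,y_2)-X(y_1,y_1)\leq\varepsilon_1-\varepsilon_2\leq\delta_0$ whenever $(x_0,\delta_0)\leq(y_1,\varepsilon_1)\leq(y_2,\varepsilon_2)$, and (a) makes $\delta_0$ arbitrarily small, so Lemma~\ref{GFC is FC} shows the net is forward Cauchy. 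Identifying the generated weight with $\phi$ uses two ingredients: the weight axiom gives $X(y,x_\mu)>\phi(y)-\delta_\mu$, yielding $\liminf_\mu X(y,x_\mu)\geq\phi(y)$; conversely, whenever $\phi(y)<\infty$ the pair $(y,\phi(y)-X(y,y)+\eta)$ lies in $\Lambda$, so the order's definition forces $X(y,x_\mu)\leq\phi(y)+\eta$ past that index, giving the matching upper bound. The type equality $t\phi=\limsup_\mu X(x_\mu,x_\mu)$ follows by the same reasoning at $y=x_0$, with~(a) supplying initial indices whose $X(x_0,x_0)$ lies near $t\phi$.

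The principal technical hurdle is the synchronization of the shifts $\rho_i$ in $(1)\Rightarrow(2)(\mathrm{b})$: they must be tuned so that the right-hand maximum coming from flatness collapses to a common value, thereby letting the infimum deliver a single witness $z$ that simultaneously controls both $\delta_i$'s — a single-shift choice would bound only one side at a time. A subordinate subtlety in $(2)\Rightarrow(3)$ is the upper-bound half, which hinges on $y$ itself being admissible as an index of $\Lambda$ whenever $\phi(y)<\infty$, a cofinality trick available because the target object of the weight can be reinserted into the indexing set.
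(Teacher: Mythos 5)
Your proposal is correct and follows essentially the same route as the paper: the implication $(3)\Rightarrow(1)$ via Theorem~\ref{FC weight implies flat weight}, the extraction of (a) from preservation of top coweights and of (b) from preservation of binary meets of the shifted coweights $X(x_i,-)+\mathrm{const}$, and the construction of a forward Cauchy net indexed by the formal balls $\{(x,\delta):\phi(x)<X(x,x)+\delta\}$ together with Lemma~\ref{GFC is FC}. The only (cosmetic) difference is your normalization of the shifts — the paper takes $\lambda_1=\delta_2-X(x_1,x_1)$, $\lambda_2=\delta_1-X(x_2,x_2)$ plus a common $k$ so that both $\lambda_i+k+\phi(x_i)$ fall below the single threshold $\delta_1+\delta_2+k$, rather than synchronizing $\phi(x_1)+\rho_1=\phi(x_2)+\rho_2$ — and both choices make the flatness identity deliver one witness $y$ controlling both indices.
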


\begin{proof} $(1)\Rightarrow(2)$ By virtue of \textbf{Fact 2} at the end of Section \ref{section 4},
for each $b\geq0$, $$\inf_{x}(\phi(x)+ (\max\{b,X(x,x)\}-X(x,x))) =\max\{b,t\phi\}.$$ Since $\phi$ is non-trivial, one has   $t\phi<\infty$. Applying the above equality to   $b=t\phi$  gives that $$\inf_{\phi(x)<\infty}\big(\phi(x)-X(x,x)+ \max\{t\phi,X(x,x)\}\big) =t\phi.$$ Hence $$\inf_{\phi(x)<\infty}(\phi(x)-X(x,x))=0.$$ This proves (a).

Now we prove  (b). Let $\lambda_1=\delta_2-X(x_1,x_1)$ and $\lambda_2=\delta_1-X(x_2,x_2)$, and let $k$ be a positive real number such that $\lambda_i+k>0$.
Then $$\max\{\lambda_1+k+\phi(x_1), \lambda_2+k+\phi(x_2)\}<\delta_1+ \delta_2+k.$$  Consider the coweights $\psi_1,\psi_2:*_0\oto X$  on $X$  (of type $0$) given by  $$\psi_1(y)=\lambda_1+k+X(x_1,y), \quad   \psi_2(y)=\lambda_2+k+X(x_2,y) $$  for all $y\in X$. Let $\psi_1\wedge \psi_2$ be the  meet of $\psi_1$ and $\psi_2$ in the complete lattice of  all distributors $*_0\oto X$. Since $\phi$ is flat, it follows that
\begin{align*}
&~ \inf_{x}(\max\{\psi_1(x),\psi_2(x)\}+(\phi(x)-X(x,x)))\\ =&~ \phi\circ(\psi_1\wedge\psi_2)\\ = & ~ \max\{\phi\circ\psi_1,\phi\circ\psi_2\}  \\
= & ~\max\Big\{\inf_{x}(\psi_1(x)+(\phi(x)-X(x,x))),  \inf_{x}(\psi_2(x)+(\phi(x)-X(x,x)))\Big\}\\
= & ~\max\{\lambda_1+k+\phi(x_1),\lambda_2+k+\phi(x_2)\}\\
< &~\delta_1+\delta_2+k.\end{align*}
Thus, there is some $y\in X$ such that $$\lambda_i+k+X(x_i,y)+\phi(y)-X(y,y)<\delta_1+\delta_2+k,$$ hence $$X(x_i,y)-X(x_i,x_i)+\phi(y)-X(y,y)<\delta_i.$$
Find some $\alpha_i,\beta_i>0 \ (i=1,2)$ such that $X(x_i,y)-X(x_i,x_i)<\alpha_i$, $\phi(y)-X(y,y)<\beta_i$, and that $\alpha_i+\beta_i\leq\delta_i$. Let $\varepsilon=\min\{\beta_1,\beta_2\}$. Then  $$\phi(y)<X(y,y)+\varepsilon\quad \textrm{and } \quad X(x_i,y)+\varepsilon<X(x_i,x_i)+\delta_i$$ as desired.

$(2)\Rightarrow(3)$ First of all, since $$\inf_{\phi(x)<\infty}(\phi(x)-X(x,x))=0,$$
for each $\varepsilon>0$ there exists some $x$ such that $\phi(x)<X(x,x)+\varepsilon$.

Consider the set $$D=\{(x_\lambda,r_\lambda)\mid \phi(x_\lam)<X(x_\lambda,x_\lambda)+r_\lambda\}.$$ Define a binary relation $\sqsubseteq$ on $D$ by $$(x_\mu,r_\mu)\sqsubseteq (x_\nu, r_\nu)\iff X(x_\mu,x_\nu)+r_\nu\leq X(x_\mu,x_\mu)+r_\mu$$ for all $(x_\mu,r_\mu), (x_\nu, r_\nu)\in D$. In particular, if $(x_\mu,r_\mu)\sqsubseteq (x_\nu, r_\nu)$ then $r_\nu\leq r_\mu$. It is easily seen that $(D,\sqsubseteq)$ is a directed set.
We shall show that the net $$x:D\lra X, \quad (x_\lambda,r_\lambda)\mapsto x_\lambda $$ is forward Cauchy and generates $\phi$.

\textbf{Step 1}.  $\{x_{(x_\lambda,r_\lambda)}\}$ is  forward Cauchy. Given $\varepsilon>0$, take some $(x_\lambda,r_\lambda)\in D$ with  $r_{\lambda}<\varepsilon$. Then  $$X(x_\mu,x_\nu)-X(x_\mu,x_\mu)\leq r_\mu-r_\nu<\varepsilon-r_\nu<\varepsilon$$ whenever $(x_\nu,r_\nu)\sqsupseteq (x_\mu, r_\mu)\sqsupseteq(x_\lambda,r_\lambda)$. Thus,  $\{x_{(x_\lambda,r_\lambda)}\}$  is forward Cauchy by Lemma \ref{GFC is FC}.

\textbf{Step 2}. $\phi$ is generated by $\{x_{(x_\lambda,r_\lambda)}\}$, that is, $$\phi(x)=\inf_{(x_\lambda,r_\lambda)} \sup_{(x_\sigma, r_\sigma)\sqsupseteq(x_\lambda,r_\lambda)} X(x, x_\sigma)$$ for all $x\in X$. We proceed with two cases.

\textbf{Case 1}. $\phi(x)=\infty$.   For each $(x_\sigma, r_\sigma)\in D$, since $$X(x,x_\sigma)+(\phi(x_\sigma)-X(x_\sigma,x_\sigma))\geq \phi(x),$$ it follows that $X(x,x_\sigma)=\infty$, hence the equality is trivial.

\textbf{Case 2}. $\phi(x)<\infty$. Take   $\varepsilon>0$.  Since $(x,\phi(x)-X(x,x)+\varepsilon)\in D$,  there exists some index $\nu$ such that $(x_\nu,r_\nu)=(x,\phi(x)-X(x,x)+\varepsilon)$. Since for each $(x_\sigma, r_\sigma)\sqsupseteq(x_\nu,r_\nu)$,   $$X(x,x_\si)\leq X(x,x)+ \phi(x)-X(x,x)+\varepsilon-r_\si=\phi(x)-r_\si+ \varepsilon<\phi(x)+\varepsilon,$$
it follows that $$\sup_{(x_\sigma, r_\sigma)\sqsupseteq(x_\nu,r_\nu)}X(x,x_\sigma)\leq\phi(x)+ \varepsilon,$$
then $$\inf_{(x_\lambda,r_\lambda)}\sup_{(x_\sigma, r_\sigma)\sqsupseteq(x_\lambda, r_\lambda)}X(x, x_\sigma)\leq \phi(x) $$ by arbitrariness of $\varepsilon$.

Conversely, let $(x_\lambda,r_\lambda)\in D$ and  $\varepsilon>0$. By (a), there exists some $y\in X$ such that $\phi(y)<X(y,y)+\varepsilon$. Thus, there is some index $\nu$ such that  $(y,\varepsilon)=(x_\nu,r_\nu) \in D$. Let $(x_\si,r_\si)$ be an upper bound of $(x_\nu,r_\nu)$ and $(x_\lambda,r_\lambda)$ in $D$.  Then
\begin{align*} \phi(x)&\leq \phi(x_\sigma)+X(x,x_\sigma)- X(x_\sigma,x_\sigma) &(\textrm{$\phi$ is a weight})\\ &\leq X(x,x_\sigma)+ r_\sigma & (\textrm{$(x_\si,r_\si)\in D$})\\ &\leq X(x,x_\sigma)+\varepsilon, & (\textrm{$r_\sigma\leq r_\nu= \varepsilon$})\end{align*}
it follows that $$\phi(x)\leq\sup_{(x_\sigma, r_\sigma)\sqsupseteq(x_\lambda,r_\lambda)}X(x,x_\sigma) $$ by arbitrariness of $\varepsilon$. Therefore, $$\phi(x)\leq \inf_{(x_\lambda,r_\lambda)}\sup_{(x_\sigma, r_\sigma)\sqsupseteq(x_\lambda,r_\lambda)}X(x, x_\sigma).$$

$(3)\Rightarrow(1)$   Theorem \ref{FC weight implies flat weight}. \end{proof}

\begin{prop}\label{cauchy=bicauchy} Each non-trivial Cauchy weight on a generalized partial metric space is biCauchy. \end{prop}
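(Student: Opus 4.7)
The plan is to mirror the construction of Proposition \ref{flat module}, but exploit the left adjoint $\psi$ of $\phi$ to simultaneously control both a forward Cauchy weight and its symmetric coweight partner. Let $\phi$ be a non-trivial Cauchy weight on a generalized partial metric space $X$ with left adjoint $\psi:*_{t\phi}\oto X$. Then $t\phi<\infty$, the adjunction inequality $\psi\circ\phi\leq X$ unpacks to
\begin{equation*}
X(z,w)\leq \phi(z)+\psi(w)-t\phi \quad\text{for all } z,w\in X,
\end{equation*}
and $\phi\circ\psi=t\phi$ forces $\inf_{x\in X}(\phi(x)+\psi(x)-X(x,x))=t\phi$.

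Following the template of Proposition \ref{flat module}, I would define
\begin{equation*}
D=\bigl\{(x,r)\in X\times(0,\infty):\phi(x)+\psi(x)-X(x,x)<t\phi+r\bigr\},
\end{equation*}
ordered by $(x_\mu,r_\mu)\sqsubseteq(x_\nu,r_\nu)\iff r_\nu\leq r_\mu$. The infimum identity furnishes an element of $D$ for every prescribed $r>0$, making $D$ non-empty and directed. The crux of the argument is a short case analysis: using $\phi(x),\psi(x)\geq\max\{t\phi,X(x,x)\}$ and splitting on whether $X(x,x)\leq t\phi$ or $X(x,x)>t\phi$, one pins down, for every $(x,r)\in D$,
\begin{equation*}
X(x,x)\in(t\phi-r,t\phi+r)\quad\text{and}\quad \phi(x),\psi(x)\in[t\phi,t\phi+r).
\end{equation*}

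From here the remaining steps are routine estimates. The adjunction inequality yields $X(x_\mu,x_\nu)<t\phi+r_\mu+r_\nu$, while $X(x_\mu,x_\nu)\geq X(x_\mu,x_\mu)>t\phi-r_\mu$ supplies a matching lower bound; as $r$ shrinks along $D$ these force $\lim_{\mu,\nu}X(x_\mu,x_\nu)=t\phi$, so the net $\{x_\lam\}_{\lam\in D}$ is biCauchy by Remark \ref{FC=convergent}(1). To see that $\phi$ is generated by this net, fix $z\in X$: the adjunction gives $X(z,x_\mu)\leq\phi(z)+\psi(x_\mu)-t\phi<\phi(z)+r_\mu$, while the weight triangle inequality $\phi(z)\leq X(z,x_\mu)+\phi(x_\mu)-X(x_\mu,x_\mu)$ combined with the tight enclosures above yields $X(z,x_\mu)>\phi(z)-2r_\mu$, producing $\lim_\mu X(z,x_\mu)=\phi(z)$; specialising to $z=x_\lam$ also recovers $\lim_\mu X(x_\mu,x_\mu)=t\phi$, matching the type of $\phi$.

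The main obstacle I anticipate is the simultaneous three-way control of $\phi(x)$, $\psi(x)$, and $X(x,x)$ by the single size parameter $r$; once that enclosure is established, the biCauchy property of the net and the generation of $\phi$ follow mechanically. A minor nuisance is the convention $\infty-\infty=0$: since $t\phi<\infty$, the locus relevant to $D$ sits where $\phi(x),\psi(x),X(x,x)$ are all finite, and for external $z$ with $\phi(z)=\infty$ the weight triangle inequality forces $X(z,x_\mu)=\infty$, so the generation identity holds trivially in that case.
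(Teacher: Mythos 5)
Your argument is correct. Note that the paper does not actually prove this proposition in-house: it simply cites Proposition 4.10 of Pu and Zhang \cite{Pu2012}, which asserts that every non-trivial Cauchy weight on a generalized partial metric space is generated by a biCauchy sequence. What you have written is a self-contained proof of that cited fact, obtained by adapting the index-set construction from the proof of Proposition \ref{flat module}: the identity $\inf_x(\phi(x)+\psi(x)-X(x,x))=t\phi$, which comes from $\phi\circ\psi=t\phi$, replaces condition (a) there, and the left adjoint $\psi$ does the work of condition (b) by supplying the estimate $X(x_\mu,x_\nu)\leq\phi(x_\mu)+\psi(x_\nu)-t\phi$ for \emph{arbitrary} (not just comparable) pairs of indices --- which is precisely what upgrades forward Cauchy to biCauchy and lets you use the cruder order ``$(x_\mu,r_\mu)\sqsubseteq(x_\nu,r_\nu)$ iff $r_\nu\leq r_\mu$'' instead of the finer relation used in Proposition \ref{flat module}. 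The three-way enclosure $X(x,x)\in(t\phi-r,t\phi+r)$ and $\phi(x),\psi(x)\in[t\phi,t\phi+r)$ for $(x,r)\in D$ checks out in both cases $X(x,x)\leq t\phi$ and $X(x,x)>t\phi$, using only $\phi(x),\psi(x)\geq\max\{t\phi,X(x,x)\}$, and your treatment of the $\infty-\infty=0$ convention is right since every index in $D$ has all three quantities finite. What your route buys is independence from the external reference; the only blemish is a harmless redundancy at the end, since $\lim_\mu X(x_\mu,x_\mu)=t\phi$ already follows from the enclosure rather than from specialising $z=x_\lambda$.
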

\begin{proof}This follows from the fact that each non-trivial Cauchy weight on a generalized partial metric space is generated by a biCauchy sequence, as shown in \cite{Pu2012}, Proposition 4.10. \end{proof}

\begin{thm}\label{flat module'} If $\CQ$ is the interval $[0,1]$ coupled with  the {\L}ukasiewicz t-norm, then each non-trivial flat (Cauchy, resp.)  weight on any ${\sf D}(\CQ)$-category is forward Cauchy (biCauchy, resp.). \end{thm}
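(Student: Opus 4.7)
The plan is to mirror the proof of Theorem \ref{flat1} by passing through a bounded analogue of Lawvere's quantale. The observation is that the map $a \mapsto 1-a$ is an isomorphism of quantales between $([0,1],\&_{\L},1)$ and $\CR = ([0,1]^{\rm op},\oplus,0)$, where $a \oplus b = \min\{a+b,1\}$ is truncated addition. Under this isomorphism, a ${\sf D}(\CQ)$-category corresponds to what one might call a \emph{bounded generalized partial metric space}: a set $X$ equipped with $p:X\times X\lra[0,1]$ satisfying $p(x,x),p(y,y)\leq p(x,y)$ and the truncated triangle inequality $p(x,z) \leq \min\{p(x,y)+p(y,z)-p(y,y),1\}$. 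A non-trivial weight/coweight becomes a map $X\to[0,1]$ with the usual partial-metric weight inequalities, except that all sums are taken in $\oplus$.

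First, I would spell out this correspondence carefully, translating the definitions of weight, flat weight, Cauchy weight, forward Cauchy net, biCauchy net, and the formulas for composition $\phi\circ\psi$, from the ${\sf D}(\CQ)$-language into this bounded-metric language. The point is that the bound $1$ plays a role analogous to $\infty$ in Example \ref{PMS}, and the proofs of Propositions \ref{flat module} and \ref{cauchy=bicauchy} can be rerun in this new setting essentially verbatim, since every estimate in those proofs is \emph{local}: one picks $\varepsilon$ small and works with values within $\varepsilon$ of one another.

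Second, I would reprove the analogues of Propositions \ref{flat module} and \ref{cauchy=bicauchy}. For flatness, the direction $(1)\Rightarrow(2)$ uses coweights of the form $\psi_i(y) = \lambda_i + k + X(x_i,y)$; here one must choose the shift $k$ so that all relevant values stay in $[0,1]$ and in particular below $1$, so that the truncation in $\oplus$ does not activate during the critical computation of $\phi\circ(\psi_1\wedge\psi_2) = (\phi\circ\psi_1)\wedge(\phi\circ\psi_2)$. This is feasible precisely because $\phi$ being non-trivial translates to $t\phi<1$, leaving room to accommodate the construction strictly inside $(0,1)$. The direction $(2)\Rightarrow(3)$, which constructs the indexing directed set $D=\{(x_\lambda,r_\lambda)\mid \phi(x_\lambda)<X(x_\lambda,x_\lambda)+r_\lambda\}$ and shows the resulting net is forward Cauchy and generates $\phi$, transfers without essential change, since the construction only ever manipulates small differences $r_\lambda$. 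Lemma \ref{GFC is FC} also carries over in the bounded setting because its proof only needs $p(x,x)$ to be eventually bounded, which is automatic here.

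The Cauchy/biCauchy half is analogous: one shows that a non-trivial Cauchy weight admits a left adjoint coweight and then extracts a biCauchy sequence, following the scheme of Proposition \ref{cauchy=bicauchy} (which rests on Proposition 4.10 of \cite{Pu2012}); one needs only to verify that the sequence-extraction argument there does not depend on distances being unbounded. The main obstacle I anticipate is precisely this bookkeeping around the truncation: in every place where the partial-metric proof freely adds a positive shift to a distance or to a weight value, one must ensure the sum stays below $1$, using the hypothesis that the weight is non-trivial to leave room. Once this is checked, both statements follow with the same structural arguments as in the product case.
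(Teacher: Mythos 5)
Your proposal is correct, and its core idea --- transporting the problem along the quantale isomorphism $a\mapsto 1-a$ into a bounded partial-metric setting and then following the scheme of Propositions \ref{flat module} and \ref{cauchy=bicauchy} --- is the paper's. The only divergence is in how the product-t-norm results are reused. The paper observes that under $X(x,y)=1-\bbA(x,y)$ a ${\sf D}(\CQ)$-category for the {\L}ukasiewicz quantale becomes an honest generalized partial metric space (a category over the full Lawvere quantale $([0,\infty]^{\rm op},+,0)$ that happens to be bounded by $1$), so Propositions \ref{flat module} and \ref{cauchy=bicauchy} apply as black boxes; the price is Proposition \ref{phi and phi'}, the ``lengthy but not difficult'' verification that flatness, forward Cauchyness, Cauchyness and biCauchyness are preserved under this change of ambient quantaloid --- not entirely trivial for flatness and Cauchyness, since these quantify over coweights of the ambient quantaloid and ${\sf D}([0,\infty]^{\rm op},+,0)$ admits coweights with values exceeding $1$. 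You instead stay inside the isomorphic truncated quantale $([0,1]^{\rm op},\oplus,0)$ and re-run the proofs there, trading that verification for the truncation bookkeeping you describe (e.g.\ choosing the shift $k$ and, where necessary, shrinking the $\delta_i$ so that the critical sums in the construction of the coweights $\psi_i$ stay below $1$); this works, and the paper itself records your variant by stating the {\L}ukasiewicz forms of conditions (a) and (b) before opting for the transfer argument. Your sketch is at essentially the same level of detail as the paper's own proof, and you correctly identify non-triviality (i.e.\ $t\phi<1$ after translation) as what leaves room below the truncation threshold.
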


Theorem \ref{flat module'}   can be proved in a way similar to that of propositions \ref{flat module} and \ref{cauchy=bicauchy}. For instance, it can be proved that for a non-trivial weight $\phi$ on a ${\sf D}(\CQ)$-category $\bbA$, the following conditions are equivalent:
\begin{enumerate}[\rm(1)]
\item
$\phi$ is flat.
\item  $\phi$ satisfies the following conditions: \begin{enumerate}
\item[\rm(a)]$\bigvee_{\phi(x)>0}(1+\phi(x)-\bbA(x,x))=1$;
\item[\rm(b)]if $\bbA(x_i,x_i)\&\delta_i<\phi(x_i)\ (i=1,2)$, then there exist some $y\in \bbA_0$ and $\varepsilon<1$ such that $\bbA(y,y)\&\varepsilon<\phi(y)$ and $\bbA(x_i,x_i)\&\delta_i<\bbA(x_i,y)\&\varepsilon\ (i=1,2)$.
\end{enumerate}
\item $\phi$ is forward Cauchy.\end{enumerate}

In the following we sketch a  proof of Theorem \ref{flat module'} with help of Theorem \ref{flat1}.  A generalized partial metric space $X$ is  \emph{ bounded} if $X(x,y)\leq 1$ for all $x, y\in X$. A (co)weight  $\phi$ ($\psi$, resp.)   on $X$ is \emph{ bounded}  if $\phi(x)\leq1$ ($\psi(x)\leq1$, resp.) for all $x\in X$.

Let $\CQ$ be the interval $[0,1]$ coupled with  the {\L}ukasiewicz t-norm. A ${\sf D}(\CQ)$-category $\bbA$ is a set $\mathbb{A}_0$ together with a map $\mathbb{A}:\mathbb{A}_0\times\mathbb{A}_0\lra [0,1]$ such that  $\bbA(x,y)\leq \bbA(x,x)\wedge\bbA(y,y)$ and that $\bbA(y,z)+\bbA(x,y)-\bbA(y,y)\leq\bbA(x,z)$ for all $x, y, z\in\bbA_0$. For a ${\sf D}(\CQ)$-category $\bbA$, if we let $X(x,y)=1-\bbA(x,y)$ for all $x,y\in\bbA_0$, then   $X$ becomes a bounded generalized partial metric space (with underlying set $\bbA_0$), called the \emph{associated generalized partial metric space} of $\bbA$. Furthermore,   if $\phi$ is a (co)weight on $\bbA$ with type $t\phi$, then $\phi'=1-\phi$  is a bounded (co)weight on $X$ with type  $t\phi'=1-t\phi$.

We leave the lengthy but not difficult verification of the following proposition to the reader.

 \begin{prop}\label{phi and phi'} Let $\CQ$ be the interval $[0,1]$ coupled with  the {\L}ukasiewicz t-norm, $\bbA$  a ${\sf D}(\CQ)$-category, and   $X$   the associated generalized partial metric space of $\bbA$. Then for each non-trivial weight $\phi$ on $\bbA$, we have:
\begin{enumerate}[\rm(1)]
  \item $\phi$ is a flat weight on $\bbA$ if and only if $\phi'$ is a flat weight on $X$.
  \item  $\phi$ is a forward Cauchy weight on $\bbA$ if and only if $\phi'$ is a forward Cauchy weight on $X$.
  \item $\phi$ is a Cauchy weight on $\bbA$ if and only if $\phi'$ is a Cauchy weight on $X$.
  \item  $\phi$ is a biCauchy weight on $\bbA$ if and only if $\phi'$ is a biCauchy weight on $X$.
\end{enumerate}\end{prop}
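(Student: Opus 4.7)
The plan is to verify all four equivalences by translating everything through the order-reversing involution $r \mapsto 1 - r$, which is a quantale isomorphism between the {\L}ukasiewicz quantale $([0,1], \&_{\L}, 1)$ and the $1$-truncated Lawvere quantale $([0,1]^{\rm op}, \oplus, 0)$ with $a \oplus b = \min\{a+b, 1\}$. Under this isomorphism, a ${\sf D}(\CQ)$-category $\bbA$ corresponds exactly to a generalized partial metric space $X$ (bounded by $1$) via $X(x,y) = 1 - \bbA(x,y)$, and a non-trivial weight (resp.\ coweight) $\phi$ on $\bbA$ of type $t\phi \in [0,1]$ corresponds to a non-trivial bounded weight (resp.\ coweight) $\phi' = 1 - \phi$ on $X$ of type $t\phi' = 1 - t\phi$. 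Two basic identities do most of the work: binary ${\sf D}$-meets of coweights on the two sides correspond (pointwise $\min$ on the {\L}ukasiewicz side equals pointwise $\max$ on the partial-metric side, which is the ${\sf D}$-meet in Lawvere's quantaloid), and a direct computation with $a \&_{\L} b = \max\{a+b-1, 0\}$ yields the composition identity
\[ 1 - (\phi \circ \psi) = \min\{1, \phi' \circ \psi'\}, \]
where $\phi' \circ \psi' = \inf_x (\phi'(x) + \psi'(x) - X(x,x))$ is the composition in the GPMS setting.

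With this dictionary, items (2), (3), and (4) fall out by direct unwinding. For (2), a net $\{x_\lam\}$ is forward Cauchy in $\bbA$ iff it is in $X$, because $\bbA(x_\mu, x_\nu) = 1 - X(x_\mu, x_\nu)$ and $r \mapsto 1-r$ is a homeomorphism of $[0,1]$; moreover, the net-generated weight $\bv_\lam \bw_\mu \bbA(-, x_\mu)$ with type $\bv_\lam \bw_\mu \bbA(x_\mu, x_\mu)$ translates under $1-\cdot$ to $\bw_\lam \bv_\mu X(-, x_\mu)$ with type $\bw_\lam \bv_\mu X(x_\mu, x_\mu)$, which is precisely the net-generated weight on $X$. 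Item (4) is analogous with ``biCauchy'' in place of ``forward Cauchy'' throughout. For (3), $\phi$ is Cauchy iff there is a coweight $\psi$ on $\bbA$ with $\psi \dashv \phi$, namely $t\phi \leq \phi \circ \psi$ (which forces equality) and $\psi \circ \phi \leq \bbA$; applying the composition identity (and its analogue for $\psi \circ \phi$) converts these into the defining inequalities $\phi' \circ \psi' \leq t\phi'$ and $\psi' \circ \phi' \geq X$ of the adjunction $\psi' \dashv \phi'$ in the GPMS setting, with the truncation at $1$ being benign in the non-trivial bounded regime.

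Item (1) is the main obstacle. By Fact~2, flatness amounts to $\phi \circ -$ (resp.\ $\phi' \circ -$) preserving the top coweight and binary meets of coweights for each type. The top coweight $b \wedge t$ of type $b$ on $\bbA$ translates pointwise to $x \mapsto \max\{1-b, X(x,x)\}$, the top coweight of type $1-b$ on $X$; together with the earlier correspondences, this gives the equivalence of flatness of $\phi$ on $\bbA$ with flatness of $\phi'$ tested against all \emph{bounded} coweights on $X$ (those of type $1-b \in [0,1]$). The residual task is to upgrade this to flatness of $\phi'$ against coweights of every type $b' \in [0,\infty]$. For $b' > 1$ the top coweight becomes the constant $b'$, so preservation of top reduces to $\inf_x(\phi'(x) - X(x,x)) = 0$, which one extracts from {\L}ukasiewicz flatness at $b = 1$ (it gives $\sup_x \phi(x) = t\phi$, and combined with $\phi(x) \leq \bbA(x,x)$ this forces arbitrarily small values of $\phi'(x) - X(x,x)$); preservation of binary meets in the unbounded regime reduces to the bounded case by a parallel affine argument exploiting that Lawvere composition is translation-invariant in the type.
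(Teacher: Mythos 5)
The paper explicitly leaves this verification to the reader, and your overall strategy --- transporting everything along the order-reversing isomorphism $r\mapsto 1-r$ between the {\L}ukasiewicz quantale and the $1$-truncated Lawvere quantale, with the identity $1-(\phi\circ\psi)=\min\{1,\phi'\circ\psi'\}$ as the workhorse --- is exactly the intended one. Your treatment of items (2), (3) and (4) is sound (for the converse direction of (3) you should still say a word about why an unbounded left adjoint $\psi'$ of $\phi'$ can be replaced by $\min\{\psi',1\}$ before being pulled back to $\bbA$, but that is routine in the non-trivial regime).

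There are, however, two concrete gaps in item (1), both in the direction ``$\phi$ flat on $\bbA$ implies $\phi'$ flat on $X$''. First, the claim that $\sup_x\phi(x)=t\phi$ together with $\phi(x)\leq\bbA(x,x)$ forces $\inf_x\bigl(\phi'(x)-X(x,x)\bigr)=\inf_x\bigl(\bbA(x,x)-\phi(x)\bigr)=0$ is false: take $\bbA$ with a single object $x$, $\bbA(x,x)=1$ and $\phi(x)=t\phi=\tfrac12$; then $\sup_x\phi(x)=t\phi$ and $\phi(x)\leq\bbA(x,x)$, yet $\bbA(x,x)-\phi(x)=\tfrac12$. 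Flatness at $b=1$ gives only $\sup_x\phi(x)=t\phi$ and carries no information about $\bbA(x,x)$ at the points where $\phi$ is nearly maximal. The correct source is preservation of the top coweight of type $b=t\phi$: unwinding $\phi\circ(t\phi\wedge t)=t\phi$ in the {\L}ukasiewicz quantale yields $\sup_x\bigl(\phi(x)-\max\{\bbA(x,x)-t\phi,0\}\bigr)=t\phi$, which produces points where simultaneously $\phi(x)$ is close to $t\phi$ and $\bbA(x,x)$ is close to $t\phi$, hence $\bbA(x,x)-\phi(x)$ is close to $0$ (and indeed the one-object example above fails this instance). Second, the ``parallel affine argument'' for binary meets of coweights of type $b'>1$ does not work as stated: if $\psi'$ is a coweight on $X$ of type $b'$, the translate $\psi'-c$ need not satisfy $\psi'(x)-c\geq X(x,x)$, so it need not be a coweight of type $b'-c$, and the reduction to the bounded regime is not verbatim. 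A cleaner route, consistent with how the paper assembles Theorem \ref{flat module'}, is to avoid testing $\phi'$ against unbounded coweights altogether: use flatness of $\phi$ on $\bbA$ to verify conditions (a) and (b) of Proposition \ref{flat module} for $\phi'$ (these involve only the bounded quantities $\phi'(x)-X(x,x)$ and $X(x_i,y)-X(x_i,x_i)$), conclude via $(2)\Rightarrow(3)$ there that $\phi'$ is forward Cauchy, and then get flatness of $\phi'$ from $(3)\Rightarrow(1)$.
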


\begin{proof}[Proof of Theorem \ref{flat module'}] The conclusion follows from a combination of  propositions \ref{flat module},  \ref{cauchy=bicauchy}, and \ref{phi and phi'}. \end{proof}

Now we come to the main result in this section.

\begin{thm}\label{main result II} Let $\CQ$ be the interval $[0,1]$ coupled with a continuous t-norm $\&$. Then the following conditions are equivalent:
\begin{enumerate}[(1)] \item  $\&$ is either isomorphic to the {\L}ukasiewicz t-norm or to the product t-norm. 
\item  Each Yoneda complete ${\sf D}(\CQ)$-category with an isolated element is flat complete.
\item  Each Yoneda complete ${\sf D}(\CQ)$-category with an isolated element is Cauchy complete.
\item  Each bicomplete ${\sf D}(\CQ)$-category with an isolated element is Cauchy complete.
\end{enumerate}\end{thm}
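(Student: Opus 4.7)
The plan is to establish the cycle $(1)\Rightarrow(2)\Rightarrow(3)\Rightarrow(1)$ together with $(1)\Rightarrow(4)\Rightarrow(3)$. The heavy lifting has already been done in Theorem \ref{main result}, which rephrases $(1)$ as the statement that every non-trivial flat weight on any ${\sf D}(\CQ)$-category is forward Cauchy and every non-trivial Cauchy weight is biCauchy. The present theorem mostly repackages that weight-theoretic content on the side of colimits, by way of Theorem \ref{sup=Yoneda} and Corollary \ref{sup=limit}.

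For $(1)\Rightarrow(2)$, assume $\&$ has no non-trivial idempotent, let $\bbA$ be Yoneda complete with an isolated element $e$, and let $\phi$ be a flat weight on $\bbA$. If $\phi$ is the trivial weight, a direct computation from Equation (\ref{subsethood}) gives $\CP\bbA(\phi,\y(z))=0=\bbA(e,z)$ for every $z$, so $e$ is a supremum of $\phi$. If $\phi$ is non-trivial, Theorem \ref{main result} presents $\phi$ as a forward Cauchy weight generated by some forward Cauchy net $\{x_\lam\}$; by Yoneda completeness this net has a Yoneda limit, which is a supremum of $\phi$ by Theorem \ref{sup=Yoneda}. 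The argument for $(1)\Rightarrow(4)$ is entirely parallel, substituting ``Cauchy'' for ``flat'', ``biCauchy'' for ``forward Cauchy'', ``bicomplete'' for ``Yoneda complete'', and Corollary \ref{sup=limit} for Theorem \ref{sup=Yoneda}. Next, $(2)\Rightarrow(3)$ is immediate from Proposition \ref{Cauchy is flat}, and $(4)\Rightarrow(3)$ holds because every Yoneda complete ${\sf D}(\CQ)$-category is bicomplete: any biCauchy net is forward Cauchy, so has a Yoneda limit, which is automatically a bilimit by Proposition \ref{LC implies bi-CC}.

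The only content-bearing step, and the main obstacle, is $(3)\Rightarrow(1)$, which I argue by contraposition. If $\&$ has a non-trivial idempotent $a$, pick $b\in(a,1)$ and consider the two-object ${\sf D}(\CQ)$-category $\bbA$ with $\bbA(x,x)=b$ and $\bbA(y,y)=\bbA(x,y)=\bbA(y,x)=0$ --- the same example already used in the proof of Theorem \ref{main result}. Then $y$ is isolated, and the Cauchy weight $\phi(x)=a$, $\phi(y)=0$ has no supremum in $\bbA$ since $\bbA$ contains no object of type $a$. It remains to verify that $\bbA$ is Yoneda complete. Any forward Cauchy net $\{x_\lam\}$ has type $0$ or $b$, because $\bbA(x_\lam,x_\lam)\in\{0,b\}$. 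If the type is $b$, then $x_\lam=x$ eventually and $x$ is a Yoneda limit. If the type is $0$, then the weight $\phi'$ generated by $\{x_\lam\}$ satisfies $\phi'(z)\leq tz\wedge 0=0$, so it is the trivial weight; since the type being $0$ forbids $\{x_\lam\}$ from being eventually $x$, a direct check confirms that $y$ is a Yoneda limit of $\{x_\lam\}$. Thus $\bbA$ is Yoneda complete with an isolated element yet not Cauchy complete, contradicting $(3)$. The same $\bbA$ is readily seen to be bicomplete too, providing a direct route $(4)\Rightarrow(1)$ for readers who prefer to bypass the chain through $(3)$.
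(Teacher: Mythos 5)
Your proof is correct and follows essentially the same route as the paper: the forward implications are assembled from Theorem \ref{main result}, Theorem \ref{sup=Yoneda}, Corollary \ref{sup=limit} and Proposition \ref{Cauchy is flat}, and the reverse implications come from the same two-object counterexample built on a non-trivial idempotent. You additionally spell out two points the paper leaves implicit --- that the isolated element supplies the supremum of the trivial weight (which is exactly why that hypothesis is needed), and the explicit check that the counterexample is Yoneda complete --- both of which are verified correctly.
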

\begin{proof} That $(1)\Rightarrow(4)$ and that $(1)\Rightarrow(2)\Rightarrow(3)$  follows from Theorem \ref{main result}, Theorem \ref{sup=Yoneda}, Corollary \ref{sup=limit} and Proposition \ref{Cauchy is flat}. It remains to check that $(3)\Rightarrow(1)$ and that $(4)\Rightarrow(1)$. Suppose on the contrary that $\&$ is neither isomorphic to the   {\L}ukasiewicz t-norm nor to the product t-norm. Then, by virtue of Theorem \ref{ord sum}, $\&$ has a non-trivial idempotent element, say $a$. Consider the ${\sf D}(\CQ)$-category $\bbA$ and the Cauchy weight $\phi$ given in the proof of Theorem \ref{main result}. It is clear that $\phi$ does not have a supremum, hence $\bbA$ is not Cauchy complete. But, $\bbA$ is  Yoneda complete (hence bicomplete) and has an isolated element  $y$, a contradiction. \end{proof}

Suppose $\CQ$ is the interval $[0,1]$ coupled with a continuous t-norm $\&$.  Theorem \ref{main result} shows that, for ${\sf D}(\CQ)$-categories, Cauchy weights  need not be biCauchy, though biCauchy weights are always Cauchy. However, Theorem 4.19 in Hofmann and Reis \cite{Hofmann2013} shows that a weight on a $\CQ$-category (not a ${\sf D}(\CQ)$-category) is Cauchy if and only if it is biCauchy.  The following conclusion is a bit of a surprise compared with the result of Hofmann and Reis.

\begin{prop}If $\CQ$ is the interval $[0,1]$ coupled with a continuous t-norm $\&$, then the following conditions are equivalent:
\begin{enumerate}[(1)] \item  $\&$ has no non-trivial idempotent elements.
\item  Each  flat weight on any $\CQ$-category   is   forward Cauchy.
\end{enumerate} \end{prop}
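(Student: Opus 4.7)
The direction $(1)\Rightarrow(2)$ is an immediate consequence of Theorem~\ref{main result}: a $\CQ$-category is a global ${\sf D}(\CQ)$-category, every weight on it has type $1\neq\bot$ and is therefore non-trivial in the sense of Theorem~\ref{main result}; hence if $\&$ has no non-trivial idempotents, each flat weight is forward Cauchy. For $(2)\Rightarrow(1)$ I argue contrapositively: assuming $\&$ admits a non-trivial idempotent $a\in(0,1)$, I will exhibit an explicit $\CQ$-category $\bbA$ carrying a flat weight $\phi$ that is not forward Cauchy. The ${\sf D}(\CQ)$-counterexample used in the proof of Theorem~\ref{main result} has diagonal values strictly less than $1$ and so is not itself a $\CQ$-category, which is why a fresh construction is needed.

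\textbf{The counterexample.} Let $\bbA$ be the two-object $\CQ$-category with $\bbA_0=\{x,y\}$, $\bbA(x,x)=\bbA(y,y)=1$, $\bbA(x,y)=0$ and $\bbA(y,x)=a$. Transitivity reduces to the trivial inequalities $1\&a\leq a$, $a\&0\leq 1$, and their symmetrics. Define $\phi:\bbA\oto *_1$ by $\phi(x)=a$ and $\phi(y)=1$; the weight inequalities $\phi(y)\&\bbA(x,y)=0\leq a$ and $\phi(x)\&\bbA(y,x)=a\&a=a\leq 1$ (using $a\&a=a$) are immediate.

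\textbf{Main step: $\phi$ is flat.} Since $a$ is idempotent, Equation~(\ref{tensor with an idempoent}) gives $a\&t=\min(a,t)$ for every $t\in[0,1]$. A coweight $\psi:*_b\oto\bbA$ is a pair $(\alpha,\beta)$ with $\alpha,\beta\leq b$ subject to $\min(a,\beta)\leq\alpha$, and a direct computation yields $\phi\circ\psi=\max(\min(a,\alpha),\beta)$. Preservation of the top coweight is immediate: $\phi\circ(b,b)=\max(\min(a,b),b)=b$. For binary meets of $\psi_1=(\alpha_1,\beta_1)$ and $\psi_2=(\alpha_2,\beta_2)$, write $M_i=\min(a,\alpha_i)$; after distributing $\min$ over $\max$ in $[0,1]$, the required identity $\phi\circ(\psi_1\wedge\psi_2)=(\phi\circ\psi_1)\wedge(\phi\circ\psi_2)$ reduces to the cross-term inequalities
\[
\min(M_1,\beta_2),\ \min(\beta_1,M_2)\ \leq\ \max(\min(M_1,M_2),\min(\beta_1,\beta_2)).
\]
I would verify these by splitting on whether each $\beta_i$ is $\geq a$ or $<a$, with a further sub-case on $\alpha_i\geq a$ versus $\alpha_i<a$ in the latter; in every combination the coweight constraint $\min(a,\beta_i)\leq\alpha_i$ either closes the inequality (by forcing $M_i=a$ once $\beta_i\geq a$) or exhibits the configuration as vacuous. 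The bookkeeping across these sub-cases is what I expect to be the main obstacle, though the uniform reduction to $\min$ via idempotence of $a$ keeps every step elementary and works for any continuous t-norm.

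\textbf{$\phi$ is not forward Cauchy.} Any net $\{x_\lam\}$ in $\bbA$ takes values in $\{x,y\}$. If $\{x_\lam\}$ is not eventually constant then every tail contains indices $\mu\leq\nu$ with $x_\mu=x$ and $x_\nu=y$, so $\bw_{\nu\geq\mu\geq\lam}\bbA(x_\mu,x_\nu)\leq\bbA(x,y)=0$ for every $\lam$, forcing $\bv_\lam\bw=0\neq 1$. Thus forward Cauchy nets in $\bbA$ are eventually constant at $x$ or at $y$, and the weights they generate are $\bbA(-,x)=(1,a)$ and $\bbA(-,y)=(0,1)$ respectively. Since $\phi=(a,1)$ coincides with neither (as $0<a<1$), $\phi$ is flat but not forward Cauchy, completing the contrapositive.
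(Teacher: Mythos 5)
Your proof is correct, and while the direction $(1)\Rightarrow(2)$ coincides with the paper's (both invoke Theorem~\ref{main result} via the identification of $\CQ$-categories with global ${\sf D}(\CQ)$-categories), your contrapositive for $(2)\Rightarrow(1)$ is genuinely different. The paper takes the infinite $\CQ$-category $\bbA=([0,1],\ra)$ with the weight $\phi(0)=1$, $\phi(x)=a$ for $x>0$, asserts flatness without proof, and rules out forward Cauchyness by an $\varepsilon$-argument: any generating net would eventually have $x_\si>a-\varepsilon$, forcing $\phi(x)=1$ for small $x$. Your two-object category $\bbA(x,x)=\bbA(y,y)=1$, $\bbA(x,y)=0$, $\bbA(y,x)=a$ with $\phi=(a,1)$ makes the non-forward-Cauchy half essentially trivial (forward Cauchy nets in a two-point set with $\bbA(x,y)=0$ must be eventually constant, so the only forward Cauchy weights are the two representables $(1,a)$ and $(0,1)$), at the cost of having to verify flatness by hand; the paper buys an unverified "easy to check" flatness claim at the cost of a limit argument. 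The one step you flag as potentially delicate --- the cross-term inequalities --- in fact closes in one line: since $M_1=\min(a,\alpha_1)\leq a$ and the coweight constraint gives $\min(a,\beta_2)\leq\alpha_2$, hence $\min(a,\beta_2)\leq M_2$, one gets
\[
\min(M_1,\beta_2)=\min\bigl(M_1,\min(a,\beta_2)\bigr)\leq\min(M_1,M_2),
\]
and symmetrically for $\min(\beta_1,M_2)$, so both cross-terms are already dominated by $\min(M_1,M_2)$ and no case split is needed. With that observation your argument is complete and, if anything, more elementary than the paper's.
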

\begin{proof} $(1)\Rightarrow(2)$ is contained in  Theorem \ref{main result} because, in this case, a $\CQ$-category is exactly a global  ${\sf D}(\CQ)$-category. When $\&$ is the product t-norm, an equivalent version of this implication, i.e., in the form of generalized metric spaces, is contained in Vickers \cite{Vickers2005}.  It remains to check $(2)\Rightarrow(1)$.

Suppose on the contrary that $\&$ has a non-trivial idempotent element, say, $a$. Consider the $\CQ$-category $\bbA=([0,1],\rightarrow)$ and the weight $\phi$ on $\bbA$ given by $\phi(0)=1$ and $\phi(x)=a$ for all $x\in(0,1]$. It is easy to check that $\phi$ is flat. But, $\phi$ is not forward Cauchy. To see this, suppose that $$\phi=\bv_\lam\bw_{\si\geq\lam}\bbA(-,x_\si)$$ for a forward Cauchy net $\{x_\lam\}$. Then $$a=\phi(1)=\bv_\lam\bw_{\si\geq\lam}\bbA(1,x_\si)=\bv_\lam\bw_{\si\geq\lam}x_\si.$$  Thus, there exist $0<\varepsilon<a$ and $\lam_0$ such that $x_\si>a-\varepsilon$ whenever $\si\geq\lam_0$. Therefore, for each $x\in(0,a-\varepsilon)$, one has $$\phi(x)=\bv_\lam\bw_{\si\geq\lam}\bbA(x,x_\si)=1,$$ a contradiction to that $\phi(x)=a$.
 \end{proof}

\section{Concluding remarks}
Let $\CQ$ be the interval $[0,1]$ coupled with a continuous t-norm $\&$. In this paper we considered four kinds of completeness for ordered fuzzy sets valued in $\CQ$: Yoneda completeness, bicompleteness, flat completeness, and Cauchy completeness. Both Yoneda completeness and flat completeness are natural extensions of directed completeness of partially ordered sets; bicompleteness and Cauchy completeness are weak (or, ``symmetric") version  of Yoneda completeness and flat completeness, respectively.

This paper is concerned with the relationship among these kinds of completeness. The main results are:
\begin{enumerate}[(1)]
\item Flat completeness is stronger than the other three; bicompleteness  is weaker than the other three.
$$\bfig
\square<1100, 500>[{\rm flat~complete}`{\rm Yoneda~ complete}`{\rm Cauchy~complete}`{\rm bicomplete};` ` ` ]
\efig$$
\item All Yoneda  complete (bicomplete, resp.) ordered fuzzy sets with an isolated element are flat (Cauchy, resp.) complete  if and only if the continuous t-norm $\&$ is either isomorphic to the {\L}ukasiewicz t-norm or to the product t-norm. \end{enumerate}

The results provide an example of an interesting phenomenon in fuzzy set theory: the  properties of mathematical structures   (here, ordered fuzzy sets) interact with the properties of the table  of truth-values. This interaction has no counterpart in classic mathematics, so, the study of this interaction is very likely to be an important topic in  fuzzy set theory. More examples in this regard can be found in \cite{Chen2011,Lai2005} (fuzzy topology) and \cite{Lai2016} (fuzzy order).

Though  the results presented here are proved only in the case that the quantale  $\CQ$ is the interval $[0,1]$ coupled with a continuous t-norm $\&$, the notions introduced here make sense for any quantale. So, a  question arises:

\begin{ques} For what kind of quantales $\CQ=(Q,\&,1)$ do we still have Theorem \ref{sup=Yoneda} and Theorem \ref{FC weight implies flat weight}? In particular, are they true for all BL-algebras in the sense of H\'{a}jek \cite{Hajek1998}?
\end{ques}

We would like to note that if $\CQ=(Q,\&, 1)$ is a frame, i.e. $\&=\wedge$, then it is not hard to check that all forward Cauchy (biCauchy, resp.) weights on any ${\sf D}(\CQ)$-category are flat (Cauchy, resp.), but, we do not know whether Theorem \ref{sup=Yoneda} still holds in this case.

~

\noindent\textbf{Acknowledgements}

The  authors thank gratefully the reviewers for their insights and comments that helped to improve  this paper greatly.

\end{document}